\documentclass[a4paper,11pt]{amsart}
\usepackage{amsfonts}
\usepackage{amssymb}
\usepackage[utf8]{inputenc}
\usepackage{amsmath}
\usepackage{pdflscape}
\usepackage{float}
\usepackage{easyReview}
\usepackage{graphicx}
\setcounter{MaxMatrixCols}{30}
\vfuzz2pt
\hfuzz2pt
\usepackage[colorlinks=true, linkcolor=red, citecolor=blue]{hyperref}
\usepackage[]{epsfig}
\usepackage[]{pstricks}
\usepackage{tikz}
%
\newcommand{\nwc}{\newcommand}
\nwc\eps{\varepsilon}
%

\usepackage{caption}
\usepackage{subcaption}

\newcommand\nd{\noindent}

\newcommand\I{{\mathcal{I}}}

\newcommand\R{\mathbb{R}}

\newcommand\N{\mathbb{N}}

\nwc\EN{{\mathcal{E}_1}}

\newcommand\ep{\epsilon}

\nwc{\dx}{\partial_x}
\nwc{\dy}{\partial_y}


\newcommand\M{{\mathcal{M}}}

\nwc{\hamone}{{\mathcal{H}}}
\nwc{\Imu}{\Sigma}
\nwc{\Iem}{\I}
\nwc{\Gep}{G}
\nwc{\wt}{\widetilde}
\nwc{\IF}{\mathcal F}


 \newtheorem{thm}{Theorem}[section]
 
 \newtheorem{lem}[thm]{Lemma}
 
 \theoremstyle{definition}
 
 \theoremstyle{remark}
 \newtheorem{rem}[thm]{Remark}
 
 \numberwithin{equation}{section}
\setlength{\textwidth}{16cm} \setlength{\textheight}{20 cm}
\addtolength{\oddsidemargin}{-1.5cm}
\addtolength{\evensidemargin}{-1.5cm}

\begin{document}

\title[Traveling-wave solutions for a Boussinesq system]{
 Traveling-wave solutions for a higher-order Boussinesq system: existence and numerical analysis}


\author[Capistrano-Filho]{Roberto de A. Capistrano--Filho}

\address{
Department of Mathematics, Universidade Federal de Pernambuco\\
Av. Prof. Moraes Rego, 1235 - Cidade Universitária, Recife - PE, 50670-901, Brasil}

\email{roberto.capistranofilho@ufpe.br}

\author[Mu\~noz]{Juan Carlos Mu\~noz*}

\address{
Department of Mathematics, Universidad del Valle\\
Calle 13, 100-00\\
Cali - Colombia}

\email{juan.munoz@correounivalle.edu.co}

\author[Quintero]{Jos\'e R. Quintero}

\address{
Department of Mathematics, Universidad del Valle\\
Calle 13, 100-00\\
Cali - Colombia}

\email{jose.quintero@correounivalle.edu.co}
\subjclass[2020]{76B15, 35A15, 37K40, 65M70, 65M06 }

\keywords{Boussinesq system, solitary waves, variational approach, spectral numerical methods}

\date{\today}
\thanks{*Corresponding author: juan.munoz@correounivalle.edu.co}

\begin{abstract}
We study the existence and numerical computation of traveling wave solutions for a family of nonlinear higher-order Boussinesq evolution systems with a Hamiltonian structure. This general Boussinesq evolution system includes a broad class of homogeneous and non-homogeneous nonlinearities. We establish the existence of traveling wave solutions using the variational structure of the system and the \textit{concentration-compactness} principle by P.-L. Lions, even though the nonlinearity could be non-homogeneous.  For the homogeneous case, the traveling wave equations of the Boussinesq system are approximated using a spectral approach based on a Fourier basis, along with an iterative method that includes appropriate stabilizing factors to ensure convergence. In the non-homogeneous case, we apply a collocation Fourier method supplemented by Newton's iteration. Additionally, we present numerical experiments that explore cases in which the wave velocity falls outside the theoretical range of existence.
\end{abstract}

\maketitle

\section{Introduction}
Bona, Chen, and Saut, in \cite{BCS02} and \cite{BCS04}, derived some classical Boussinesq systems corresponding to the first- and second-order approximations to the full two-dimensional Euler equations to describe the motion of short waves of small amplitude on the surface of an ideal fluid under gravity force. In particular, the authors derived a four-parameter family of Boussinesq systems from the two-dimensional Euler equations, known as the $abcd$ Boussinesq system, 
\begin{equation}
\begin{cases}
\begin{array}{rl}
\left(I- b\partial^2_x\right) \partial_t\eta  + \partial_x u
+a\partial_x^3 u
&= -\partial_x\left(\eta u \right),
\\ \label{abcd}
\left(I-d\partial_x^2\right) \partial_t u
+\partial_x \eta+c\partial_x^3 \eta
&=\frac12\partial_x\left(u^2 \right),
\end{array}
\end{cases}
\end{equation}
with $a+b+c+d=\frac13-\sigma$ ($\sigma\geq 0$ is the surface tension). It is known that system \eqref{abcd} admits (big) solitary-wave solutions in certain regimes of the parameters involved in the system (for instance, see \cite{BCL-15} and the references therein for details). Moreover, when $b=d>0$, it was also shown in \cite{BCS02} that the system \eqref{abcd} is Hamiltonian and globally well posed in the energy space $X=H^1(\mathbb{R}) \times H^1(\mathbb{R})$, at least for small data, due to the conservation of energy.

It is important to mention that in the same articles \cite{BCS02,BCS04} the authors introduced an eight-parameter family of Boussinesq systems, namely
\begin{equation} 
\begin{cases}
\begin{array}{rl}
\left(I- d\partial^2_x+d_2\partial_x^4\right) \partial_t u  + \partial_x\eta
+c\partial_x^3\eta+c_2\partial_x^5 \eta
= \partial_x\left(H_1(\eta, \partial_x\eta, \partial^2_x\eta, u, \partial_x u, \partial^2_x u) \right),
\\ \label{8-BS} \\
\left(I-b\partial_x^2+b_2\partial_x^4\right) \partial_t\eta
+\partial_x u+a\partial_x^3 u +a_2\partial_x^5 u
=\partial_x\left(H_2(\eta, \partial_x\eta, \partial^2_x\eta, u, \partial_x u,\partial^2_x u) \right),
\end{array}
\end{cases}
\end{equation}
where $H_i$ for $i=1,2$ are given by
\begin{align*}
H_1(\eta,\partial_x \eta,\partial^2_x \eta, u,\partial_x u,\partial^2_x u) &=  \frac12 u^2 -c\partial_x(u\partial_x u)-\eta \partial^2_x\eta-\frac12 \partial_x(u^2) +(c+d)u \partial^2_x u,
\end{align*}
and
\begin{align*}
H_2(\eta, \partial_x\eta,\partial^2_x \eta, u,\partial_x u,\partial^2_x u) &=  -\eta u + b\partial^2_x(\eta u)-\left(a+b-\frac13\right)\eta \partial^2_x u,
\end{align*}
with the constants $a$,  $a_2$, $b$, $b_2$, $c_2$, $d$, and $d_2$ satisfying the following condition:
\[
a+b+c+d=\frac13-\sigma,
\]
and also that 
 \[
a_2 -b_2=-\frac12 (\theta^2-\frac13)b+\frac5{24}(\theta^2-\frac15)^2,  \ \
c_2 -d_2=-\frac12 (1-\theta^2)c+\frac5{24}(1-\theta^2)(\theta^2-\frac15).
\]
Here, $\theta\in [0, 1]$ is a fixed modeling parameter, which does not possess a direct physical interpretation, as mentioned in \cite{BCS02,BCS04}.

In the present work, we study the existence of traveling waves of finite energy for the one-dimensional higher-order Boussinesq evolution system derived from \eqref{8-BS}, namely
\begin{equation} \left\{
\begin{array}{rl}
\left(I- d\partial^2_x+d_2\partial_x^4\right) \partial_tu  + \partial_x\eta+c\partial_x^3\eta+c_2\partial_x^5 \eta
&= \partial_x\left(H_1(\eta, \partial_x\eta, \partial_x^2\eta, u,\partial_x u, \partial^2_xu) \right),
\\ \label{1bbl} \\
\left(I-b\partial_x^2+b_2\partial_x^4\right) \partial_t\eta
+\partial_x u+a\partial_x^3 u +a_2\partial_x^5 u
&=\partial_x\left(H_2(\eta, \partial_x\eta, \partial_x^2\eta, u,\partial_x u, \partial^2_xu) \right),
\end{array}\right.
\end{equation}
where $\eta=\eta(x,t)$ and $u=u(x,t)$ are real-valued
functions, and the nonlinearity $H=(H_1, H_2)^t$ has the variational structure
\begin{equation*}
\begin{split}
H_1(q, r, z, s, t, w)= &F_q(q, r, s, t)- rF_{qr}(q, r, s, t)- zF_{rr}(q, r, s, t)- tF_{sr}(q, r, s, t)\\& - wF_{tr}(q, r, s, t),
\end{split}
\end{equation*}
and 
\begin{equation*}
\begin{split}
H_2(q, r, z, s, t, w)=& F_s(q, r, s, t)- rF_{qt}(q, r, s, t)- zF_{rt}(q, r, s, t)- tF_{st}(q, r, s, t)\\&- wF_{tt}(q, r, s, t),
\end{split}
\end{equation*}
where $F$  is a function with some properties, such as being $p+2$-homogeneous (see Section \ref{sec2} for some examples of $F$). Let us briefly outline some aspects of the system considered in this work.

\subsection{Background} It is important to highlight that, similar to the case of the $abcd$-Boussinesq system and the KdV equation at the traveling wave level (see, for example, \cite{CQS-2024}), the higher-order Boussinesq system \eqref{1bbl} at the traveling wave level is, depending on the nonlinearity, related to traveling wave solutions of the fifth-order KdV equation:
\begin{equation}\label{5kw}
\partial_tu + \alpha\partial^3_x u + \beta \partial^5_xu = \partial_x(f(u, \partial_xu, \partial^2_xu)),
\end{equation}
where the nonlinearity takes the variational form
$$f(q, r, s) = F_q(q, r) - rF{qr}(q, r) - sF_{rr}(q, r),$$
for some $C^2$ function $F$, which is not necessarily homogeneous, as discussed by Esfahani and Levandosky in \cite{Esfahani-Levandosky-2021}. We mention that the model \eqref{5kw} has applications in various physical phenomena, as we see in different works. See, for instance, the references \cite{Craig-Groves-1994,Hunter-Scheurle-1988,Kawahara-1972,Olver-1984,Zufiria-1987} and therein.


Numerical methods for approximating the solutions of initial–boundary-value problems associated with Boussinesq-type systems have been extensively investigated. For example, in \cite{Dougalis}, the standard Galerkin finite element method was used for spatial discretization, combined with a fourth-order explicit Runge–Kutta scheme for time integration. In \cite{Dursun}, soliton propagation and interactions were studied using two finite difference schemes and two finite element methods with second and third-order time discretizations. In \cite{Ersoy}, a collocation method based on exponential cubic B-spline functions was proposed to solve one-dimensional Boussinesq systems and simulate the motion of traveling waves. Furthermore, in \cite{Duran}, a Fourier collocation method was applied to study the dynamics of solitary-wave solutions under periodic boundary conditions.

In addition to these numerical approaches, we also highlight the recent works of Klein and Saut \cite{KS-24,KS-25}, which provide detailed numerical investigations of the Amick–Schonbek and Kaup–Broer–Kupershmidt systems. These studies survey and numerically analyze notable members of the so-called (abcd) family of Boussinesq systems.

Finally, for a more general perspective on higher-order Boussinesq systems, we refer the reader to the works of Benney \cite{Benney-1977}, Craig and Groves \cite{Craig-Groves-1994}, Hunter and Scheurle \cite{Hunter-Scheurle-1988}, Kichenassamy and Olver \cite{Kichenassamy-Olver-1991}, Olver \cite{Olver-1984}, Ponce \cite{Ponce-1993}, Bona, Chen, and Saut \cite{BCS02,BCS04}, as well as Chen, Nguyen, and Sun \cite{CNS2011}, and Bona, Colin, and Lannes \cite{BCL2005}.

We mention that this is only a brief overview of the state of the art. Therefore, we encourage the reader to refer to the references cited in the mentioned works for further details.

\subsection{Notations and main results} With this overview of the state of the art, we now present the primary objective of this work: proving the existence of traveling-wave solutions for the generalized Boussinesq system \eqref{1bbl}. To the best of our knowledge, this is the first result addressing this topic for higher-order Boussinesq systems. Specifically, our objective is to demonstrate the existence of traveling-wave solutions for the system \eqref{1bbl}. In other words, we seek solutions $(\eta, u)$ of the form:
\begin{equation*}\label{etaPhi}
\eta(t,x)=  \psi\left({x-\omega
t}\right), \ \ u(t,x)=
v\left({x-\omega t}\right).
\end{equation*}
In this case, we find that the traveling wave profile $(\psi, v)$ should
satisfy the system
\begin{equation}
\left\{
\begin{array}{rl}
-\omega\left(v-dv''+d_2 v^{(iv)}\right)+ \psi +c\psi''+c_2 \psi^{(iv)}
-H_1(\psi, \psi', \psi'', v,  v', v'')&=0,
\\ \label{trav-eqs}\\
-\omega\left(\psi-b\psi''+b_2 \psi^{(iv)}\right) + v+a v'' +a_2 v^{(iv)}
-H_2(\psi, \psi', \psi'', v,  v', v'')& =0,
\end{array}\right.
\end{equation}
where we are assuming that $H_1(0,0,0,0,0,0)=H_2(0,0,0,0,0,0) = 0$, $b=d>0$, $b_2=d_2>0$, $a, c<0$ and that $ a_2, c_2>0$. Furthermore, we point out that, since solitary waves decay to zero at infinity, the constant terms that would arise in system \eqref{trav-eqs} as a result of integrating equations \eqref{1bbl} must necessarily vanish.

Note that the existence of solitons, traveling waves with finite energy, for the higher-order Boussinesq system \eqref{1bbl} follows from a variational approach using a minimax-type result. Specifically, solutions $(\psi, v)$ of the system \eqref{trav-eqs} are critical points of the functional $J_{\omega}$ given by:
\begin{equation}\label{J_w}
J_{\omega}(\psi, v)=\frac12 I_{\omega}(\psi, v)- K(\psi, v),
\end{equation}
where the functionals $I_\omega$ and $K$ are defined on the space $
X= H^2(\R)\times H^2(\R)$ by
\begin{equation}\label{I_w}
I_{\omega}(\psi,v)=I_1(\psi,v)+I_{2,{\omega}}(\psi,v),
\end{equation}
and
\begin{equation}\label{K-psi-v}
K(\psi, v)  =   \int_{\mathbb R}F(\psi, \psi', v,  v')\,dx, 
\end{equation}
with 
\begin{equation}\label{I1a}
I_1(\psi, v)
 =  \int_{\mathbb R}\left[\psi^2 -c(\psi')^{2} + c_2(\psi'')^{2}+
v^2-a( v')^2+a_2( v'')^2\right]dx,
\end{equation}
and
\begin{equation}\label{I2a}
I_{2}(\psi, v)
=  \int_{\mathbb R}\left(\psi-b\psi''+b_2 \psi^{(iv)} \right)v\,dx= \int_{\mathbb R}\left(\psi v+b\psi' v'+b_2 v''\psi'' \right)\,dx,
\end{equation}
where $I_{2, \omega}=-2\omega I_{2}$. Before we go further, from the assumptions we observe
\[
K'(\psi, v)= (H_1(\psi, \psi', \psi'', v,  v', v''),
H_2(\psi, \psi', \psi'', v,  v', v''))^t.
\]
In the following, we will say that weak solutions for (\ref{trav-eqs}) are critical points of the functional $J_{\omega}$. Now, we set up
\begin{equation}\label{P_w}
P_{\omega}(\psi, v) =\left<J'_{\omega}(\psi, v), (\psi, v)\right> = I_\omega(\psi, v)-N(\psi, v), 
\end{equation}
and
\begin{equation}\label{N-pv}
N(\psi, v) =\left<K'(\psi, v), (\psi, v)\right>=\int_{\R}(u, u_x, v, v_x)\cdot \nabla F(u, u_x, v, v_x)\,dx.
\end{equation}
Remark that the quantity \eqref{P_w} is important for the analysis of the orbital stability of ground-state solutions as proposed in \cite[Theorems 2 and 3]{GSS}. See, for instance \cite{CQS-2024,HTS-13}, and the references therein for more details on the orbital and spectral stability problem related to the system \eqref{abcd}.
 
From this, we consider the manifold $\mathcal M_{\omega}\subset H^2(\R)\times H^2(\R)$ given by 
\begin{equation}\label{M_w}
\mathcal M_{\omega}=\{(\psi, v)\in H^2(\R)\times H^2(\R): \ P_{\omega}(\psi, v)=0 \},
\end{equation}
and define the number
\begin{equation}\label{sw}
S(\omega)=\inf\{J_\omega(\psi, v): (\psi, v)\in \M_{\omega}\}.
\end{equation}
Note that $\mathcal{M}_{\omega}$ is just the ``artificial constraint" for minimizing the functional $I_\omega$ on $X$.

Now, we state the assumptions on the nonlinear part $F$ in terms of the function $$G(w)= w\cdot \nabla F(w), \quad\text{for}\quad w\in \R^4.$$ As done by Esfahani-Levandosky in \cite{Esfahani-Levandosky-2021}, taking $F$ a $C^2$ function satisfying $F(0,0)=0$, and we will consider:

\nd (a) There exists  $p>0$ such that $w\cdot \nabla G(w)\geq (p+2) G(w)$ for $w\in \R^4$.

\nd (b) There exist $0<q_1\leq q_2 <\infty$ and $C>0$ such that for $w\in \R^4$,
\[
|D^2F(w)|\leq C(|w|^{q_1}+ |w|^{q_2}).
\]
\nd (c)  There exist $u, v\in H^2(\R)$ such that
\[
\int_{\R}F(u, \partial_xu, v, \partial_xv)\,dx>0.
\]
We point out that condition (c) ensures that the functionals $K$ and $N$ are positive somewhere, which is crucial for the minimization problem (\ref{sw}) to make sense.

With these previous notations in hand, traveling-wave solutions are established using the \textit{concentration}-\textit{compactness principle} by P.-L. Lions \cite{Lions-1984a, Lions-1984b}. So, the main result of our work can be read as follows. 
\begin{thm} \label{exist}(Existence of traveling waves). Let  $0<|\omega|<\min\left\{1,\frac{{-a}}{b}, \frac{{-c}}{b},\frac{a_2}{b_2},\frac{c_2}{b_2}\right\}$ and the function $F$ satisfying items (a), (b) and (c). Given a  minimizing sequence  $(\psi _{n}, v_n)_n$ of $S(\omega)$,  there exist a subsequence $\left(\psi_{n_{k}}, v_{n_{k}}\right)_k$, a sequence of points $y_{k}\in \mathbb{R}$  and $(\psi_0, v_0)\in\mathcal{M}_{\omega}\setminus\{0\}$ such that 
$$
(\psi_{n_{k}}(.+y_{k}), v_{n_k}(.+y_{k}))\rightarrow (\psi_0, v_0) \quad\text{in}\quad X, \ \mbox{as $k\to \infty$,}
$$ 
and $J_{\omega}(\psi_0, v_0)=S(\omega)$. In other words, $(\psi_0, v_0)$, weak solution of \eqref{trav-eqs}, is a minimizer of $J_\omega$.
\end{thm}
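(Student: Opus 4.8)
The plan is to realize the theorem as a constrained minimization solved by the concentration-compactness principle, organized in four stages. First I would fix the variational framework. The stated bound $0<|\omega|<\min\{1,-a/b,-c/b,a_2/b_2,c_2/b_2\}$ is exactly the condition under which the quadratic form $I_\omega=I_1-2\omega I_2$ is positive definite and equivalent to the $X=H^2(\R)\times H^2(\R)$ norm: working frequency by frequency (or completing squares), the $L^2$-block, the first-derivative block, and the second-derivative block are each positive definite iff $|\omega|<1$, $\omega^2b^2<ac$, and $\omega^2b_2^2<a_2c_2$ respectively, and the three listed minima encode precisely these inequalities. Thus there are constants $0<c_0\le C_0$ with $c_0\|(\psi,v)\|_X^2\le I_\omega(\psi,v)\le C_0\|(\psi,v)\|_X^2$. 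Next I would show that $\M_\omega\setminus\{0\}$ is a nonempty smooth manifold: item (c) provides a point with $N>0$, and along any ray $t\mapsto(t\psi,tv)$ with $N(\psi,v)>0$ the map $t\mapsto N(t\psi,tv)/t^2$ is strictly increasing from $0$ to $\infty$, which is exactly what assumption (a) delivers since $\tfrac{d}{dt}N(t\,\cdot)\ge\tfrac{p+2}{t}N(t\,\cdot)$; hence each such ray meets $\M_\omega$ exactly once. The same inequality, integrated, yields the comparison $N\ge(p+2)K$, so on $\M_\omega$, where $I_\omega=N$, one has $J_\omega=\tfrac12 I_\omega-K\ge\tfrac{p}{2(p+2)}I_\omega\ge0$.

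Second, I would prove $S(\omega)>0$ and the boundedness of minimizing sequences. Bound (b) controls $|\nabla F(w)|\lesssim|w|^{q_1+1}+|w|^{q_2+1}$ and hence $|G(w)|\lesssim|w|^{q_1+2}+|w|^{q_2+2}$, so by Sobolev embedding $N(\psi,v)\lesssim\|(\psi,v)\|_X^{q_1+2}+\|(\psi,v)\|_X^{q_2+2}$, while $I_\omega\gtrsim\|(\psi,v)\|_X^2$. On $\M_\omega$ these combine to $\|(\psi,v)\|_X^2\lesssim N\lesssim\|(\psi,v)\|_X^{q_1+2}+\|(\psi,v)\|_X^{q_2+2}$ with $q_1>0$, forcing $\|(\psi,v)\|_X\ge\delta>0$; thus $I_\omega\ge c_0\delta^2$ and $J_\omega\ge\tfrac{p}{2(p+2)}c_0\delta^2>0$, giving $S(\omega)>0$. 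For a minimizing sequence $(\psi_n,v_n)\in\M_\omega$, the bound $J_\omega\ge\tfrac{p}{2(p+2)}I_\omega$ keeps $I_\omega(\psi_n,v_n)$ bounded, so $(\psi_n,v_n)$ is bounded in $X$.

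Third, I would apply concentration-compactness to the nonnegative measures $\rho_n\,dx$, where $\rho_n$ is the density of the coercive form $I_\omega$, with total masses converging to $\lambda:=\lim I_\omega(\psi_n,v_n)>0$. Lions' trichotomy leaves \emph{vanishing}, \emph{dichotomy}, or \emph{compactness}. Vanishing is excluded by the vanishing lemma together with (b): if the mass of $\rho_n$ spreads out, then $(\psi_n,v_n)\to0$ in every $L^r$, $2<r<\infty$, whence $N(\psi_n,v_n)\to0$, contradicting $N=I_\omega\to\lambda>0$. To exclude dichotomy I would pass to the auxiliary value $m_\lambda=\inf\{I_\omega(\psi,v):N(\psi,v)=\lambda\}$ (to which the ray-scaling above links the minimization of $J_\omega$ on $\M_\omega$) and establish its strict subadditivity $m_{\lambda_1+\lambda_2}<m_{\lambda_1}+m_{\lambda_2}$. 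In the homogeneous case $N(t\,\cdot)=t^{p+2}N(\cdot)$ gives $m_\lambda=\lambda^{2/(p+2)}m_1$, and strict concavity of $s\mapsto s^{2/(p+2)}$ (valid since $p>0$) yields the strict inequality; in the non-homogeneous case the same conclusion is extracted from the one-sided scaling in (a). If the mass split at mutually diverging translations, each piece rescaled onto the constraint would contribute at least the corresponding minimum, and strict subadditivity would make the total exceed $S(\omega)$, a contradiction.

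Finally, compactness provides translations $y_k$ and a subsequence with $(\psi_{n_k}(\cdot+y_k),v_{n_k}(\cdot+y_k))\rightharpoonup(\psi_0,v_0)$ weakly in $X$ and no mass escaping to infinity; lower semicontinuity of $I_\omega$ together with the preserved value of $N$ in the limit upgrades this to strong convergence in $X$, so $I_\omega(\psi_0,v_0)=\lambda$, $(\psi_0,v_0)\in\M_\omega\setminus\{0\}$, and $J_\omega(\psi_0,v_0)=S(\omega)$. A Lagrange-multiplier argument then shows the constrained minimizer on the artificial constraint is a free critical point: writing $J_\omega'(\psi_0,v_0)=\mu\,P_\omega'(\psi_0,v_0)$ and pairing with $(\psi_0,v_0)$, the quantity $\langle P_\omega'(\psi_0,v_0),(\psi_0,v_0)\rangle\ne0$ (again a consequence of (a)) forces $\mu=0$, so $(\psi_0,v_0)$ solves \eqref{trav-eqs} weakly. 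I expect the exclusion of dichotomy to be the main obstacle: it rests on the strict-subadditivity estimate for the value function, and in the non-homogeneous setting this must be coaxed out of the one-sided inequality (a) rather than from exact homogeneity, which is the technically delicate heart of the argument.
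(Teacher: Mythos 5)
Your overall architecture coincides with the paper's: coercivity of $I_\omega$ under the stated velocity restriction, positivity of $S(\omega)$ and boundedness of minimizing sequences via $J_\omega\ge\frac{p}{2(p+2)}I_\omega$ on the constraint, Lions' trichotomy applied to the density of $I_\omega$, exclusion of vanishing through the $L^r$-smallness of the sequence and hypothesis (b), and the final Lagrange-multiplier step showing $\langle P_\omega'(\psi_0,v_0),(\psi_0,v_0)\rangle<0$ so that the multiplier vanishes. The one place where you genuinely diverge is the exclusion of dichotomy. You propose the classical route via strict subadditivity of the value function $m_\lambda=\inf\{I_\omega : N=\lambda\}$, obtained from concavity of $s\mapsto s^{2/(p+2)}$ in the homogeneous case. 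The paper instead proves (following Esfahani--Levandosky) the relaxed characterization $S(\omega)=\inf\{J_{\omega,p}(\Psi):\Psi\ne 0,\ P_\omega(\Psi)\le 0\}$ with $J_{\omega,p}=J_\omega-\frac{1}{p+2}P_\omega$, splits the minimizing sequence with cutoffs, and runs a case analysis on the signs of $\lim P_\omega(\psi_{n,i},v_{n,i})$: pieces with $P_\omega\le 0$ are bounded below by $S(\omega)$ directly, and a piece with $P_\omega>0$ is rescaled by some $\alpha_n\to 1^+$ back onto the constraint. The advantage of the paper's device is precisely that it survives non-homogeneity: your $m_\lambda$ is tied to $S(\omega)$ only when $K=\frac{1}{p+2}N$, i.e. when $F$ is exactly $(p+2)$-homogeneous, whereas under hypothesis (a) alone one only has the inequality $N\ge(p+2)K$ and the one-sided scaling bounds, so the identification of the constrained minimization of $J_\omega$ on $\M_\omega$ with the minimization of $I_\omega$ at fixed $N$ breaks down, and with it the clean subadditivity computation. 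You correctly flag this as the delicate heart, but as written your plan for the non-homogeneous case is a placeholder rather than an argument; the functional $J_{\omega,p}$ and the relaxed constraint $P_\omega\le 0$ are exactly the tools that fill this hole, and I would encourage you to adopt them (or supply an equivalent substitute) before considering the dichotomy step complete.
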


The second part of this work investigates the numerical analysis of traveling-wave solutions for the generalized Boussinesq system \eqref{1bbl}, a topic not previously addressed in the literature. Spectral numerical solvers are introduced to approximate these steady solutions for a given wave velocity and model parameters, considering both homogeneous and non-homogeneous nonlinearities  $H_1$ and $H_2$. In doing so, we validate the theoretical results and successfully compute some of these solutions within the parameter regimes and velocity ranges predicted by the theory developed in this paper. Furthermore, we present numerical experiments that go beyond the theoretical framework, investigating cases where the wave velocity falls outside the predicted range of existence. Our findings suggest that the existence of traveling-wave solutions to the system \eqref{1bbl} is influenced by additional parameters, such as the power of the nonlinearities.

\subsection{Outline} This paper is organized as follows. In Section \ref{sec2}, we present the preliminaries concerning the existence of solitons (traveling wave solutions of finite energy) for the higher-order Boussinesq system, employing the \textit{concentration-compactness principle}, which fully characterizes the convergence of positive measures and provides a compact local embedding result. In Section \ref{sec3}, we introduce iterative numerical schemes called  \textit{Fourier series-based method} to approximate the solitons of the Boussinesq system \eqref{1bbl}, giving a numerical verification of our main result. 

\section{Existence of solitary waves for $b=d$ and $b_2=d_2$}\label{sec2}

In this section, we prove the existence of traveling waves for the generalized Boussinesq system \eqref{1bbl}. Based on the assumptions of Esfahani and Levandosky \cite[Lemmas 2.5 and 2.6]{Esfahani-Levandosky-2021}, we can directly extend these lemmas as follows:
\begin{lem}\label{esf-lev-2021}
Under the assumptions on $F$, for $w, \tilde w\in \R^2$ we have,

\nd i) \ \ $G(w, \tilde w)\geq (p+2)F(w, \tilde w )$.

\nd ii) \ $R(\alpha (w, \tilde w))\geq \alpha^{p+2}R(w, \tilde w)$ with $\alpha\geq 1$ and $R(\alpha(w, \tilde w))\leq \alpha^{p+2}R(w, \tilde w)$ with $0<\alpha\leq 1$ for $R=G$ or $R=F$.

\nd iii) $|F(w, \tilde w)|+|G(w, \tilde w)|+|\left<(w, \tilde w), G(w, \tilde w)\right>|\leq C(|(w,\tilde w)|^{q_1+2}+ |(w, \tilde w)|^{q_2+2})$.

\nd iv) \ $N(\alpha w)\geq \alpha^{p+2}N(w)$ with $\alpha\geq 1$ and $N(\alpha w)\leq \alpha^{p+2}N(w)$ with $0<\alpha\leq 1$.

\nd v) \ \ $N(w)\geq (p+2)K(w)$.

\nd vi) \ $\left<N'(w), w \right>\geq (p+2)N(w)$.

\nd vii) $|N(w)|+|K(w)|+|\left<N'(w), w \right>|\leq C\left(||w||_{H^2}^{q_1+2}+ ||w||_{H^2}^{q_2+2}\right)$, for $w\in H^2(\R)\times H^2(\R)$.
\end{lem}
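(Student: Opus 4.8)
The plan is to prove the three pointwise statements (i)--(iii), which concern the functions $F$ and $G=w\cdot\nabla F$ on $\R^4$, and then to transfer them to the functionals $K$ and $N$ in order to obtain (iv)--(vii). The bridge is the observation that, by \eqref{K-psi-v} and \eqref{N-pv}, one has $K(\psi,v)=\int_\R F(W)\,dx$ and $N(\psi,v)=\int_\R G(W)\,dx$ with $W=(\psi,\psi',v,v')$, and that the scaling $(\psi,v)\mapsto\alpha(\psi,v)$ acts on the integrand as the pointwise scaling $W\mapsto\alpha W$; hence the integral inequalities (iv)--(vii) follow by integrating the corresponding pointwise inequalities over $\R$.

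For (i) I would fix $w\in\R^4$ and consider $h(t)=G(tw)-(p+2)F(tw)$ for $t\ge0$. Using $\tfrac{d}{dt}F(tw)=\tfrac1tG(tw)$ one finds $h'(t)=\tfrac1t\big((tw)\cdot\nabla G(tw)-(p+2)G(tw)\big)\ge0$ by hypothesis (a), while $h(0)=G(0)-(p+2)F(0)=0$ since $F(0)=0$ and $G(0)=0\cdot\nabla F(0)=0$; integrating from $0$ to $1$ gives $h(1)\ge0$, which is exactly (i). For (ii) I would use the comparison functions $\phi_G(t)=t^{-(p+2)}G(tw)$ and $\phi_F(t)=t^{-(p+2)}F(tw)$: a direct computation gives $\phi_G'(t)=t^{-(p+3)}\big((tw)\cdot\nabla G(tw)-(p+2)G(tw)\big)\ge0$ by (a), and $\phi_F'(t)=t^{-(p+3)}\big(G(tw)-(p+2)F(tw)\big)\ge0$ by (i), so both $\phi_G$ and $\phi_F$ are nondecreasing on $(0,\infty)$. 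Comparing $\phi_R(\alpha)$ with $\phi_R(1)$ for $R\in\{F,G\}$ yields the two-sided homogeneity estimate (ii).

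For (iii) I would expand by Taylor's formula with integral remainder. Using $F(0)=0$ and $\nabla F(0)=0$, one has $|\nabla F(w)|\le\int_0^1|D^2F(sw)|\,|w|\,ds$ and $|F(w)|\le\int_0^1(1-s)|D^2F(sw)|\,|w|^2\,ds$; inserting the growth bound (b) produces the desired $|w|^{q_1+2}+|w|^{q_2+2}$ control of $|F(w)|$ and of $|G(w)|\le|w|\,|\nabla F(w)|$. For the last term I would use the identity $w\cdot\nabla G(w)=G(w)+D^2F(w)[w,w]$, which is bounded by $|G(w)|+|D^2F(w)|\,|w|^2$, again of the required order by (b). Turning to the functionals, (v) follows by integrating (i) over $\R$, (iv) by integrating the pointwise homogeneity (ii) for $G$, and (vi) from $\langle N'(w),w\rangle=\int_\R W\cdot\nabla G(W)\,dx\ge(p+2)\int_\R G(W)\,dx=(p+2)N(w)$, i.e.\ by integrating (a). Finally, (vii) is obtained from the pointwise bounds (iii) together with the one-dimensional Sobolev embedding $H^1(\R)\hookrightarrow L^\infty(\R)\cap L^2(\R)$, via $\int_\R|W|^{q_i+2}\,dx\le\|W\|_{L^\infty}^{q_i}\|W\|_{L^2}^2\le C\|(\psi,v)\|_{H^2}^{q_i+2}$.

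The step I expect to be most delicate is (iii), the only place where the growth hypothesis (b) enters: the pointwise bounds on $F$ and $G$ genuinely require the vanishing of the first-order data $\nabla F(0)=0$, not merely $F(0)=0$ (otherwise a nonzero linear part of $F$ would violate the superquadratic bound near the origin, since $q_1>0$). I would therefore record $\nabla F(0)=0$ as part of the standing hypotheses on $F$, consistent with the normalization $H_1(0)=H_2(0)=0$ already imposed. With this dictionary $K\leftrightarrow F$, $N\leftrightarrow G$ and the ray-wise ODE comparison in place, the remaining parts are routine.
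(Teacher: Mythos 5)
Your proof is correct, and it is essentially the argument the paper relies on: the paper itself gives no proof of this lemma, deferring entirely to Lemmas 2.5 and 2.6 of Esfahani--Levandosky, and your ray-wise ODE comparison for (i)--(ii), Taylor expansion with the identity $w\cdot\nabla G(w)=G(w)+D^2F(w)[w,w]$ for (iii), and integration plus $H^1(\R)\hookrightarrow L^\infty(\R)$ for (iv)--(vii) is exactly the standard route there. One small improvement: you need not add $\nabla F(0)=0$ as an extra hypothesis, since it already follows from (a) and (b) --- by (b), $G(w)=w\cdot\nabla F(0)+O(|w|^{2+q_1})$ and $w\cdot\nabla G(w)=G(w)+O(|w|^{2+q_1})$ near the origin, so (a) forces $-(p+1)\,w\cdot\nabla F(0)\geq O(|w|^{2+q_1})$, and testing with $w=t\nabla F(0)$, $t\to0^+$, yields $\nabla F(0)=0$.
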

We note that under the assumption of $(p+2)$-homogeneity on $F$, we already have that
\[
G(q, r, s, t)=qF_q(q, r, s, t)+ rF_q(q, r, s, t)+sF_s(q, r, s, t)+ tF_t(q, r, s, t)=(p+2)F(q, r, s, t).
\]
Also, taking into account Esfahani-Levandosky's work, we have the following examples of functional $F:\R^4 \to \R$ that satisfy the previous assumptions, namely
\begin{itemize} 
\item[a)] $F(u, v)=F_1(u)+F_1(v)$, where $$F_1(u)=\frac{|u|^{q+2}}{
q+2} + \frac{|u|^{p+2}}{p+2},$$ with $0<p<q$;
 \item[b)] $F(u, v)=F_1(u)+F_1(v)$, where $F_1$ is defined as
\[
F_1
(u)=\sum_j^{m} a_j|u|^{q_j+2} - \sum_k^{n} b_k|u|^{p_k+2},
\]
with $a_j, b_k>0$ and $0<p_k<q_j$ for all $j, k\in \N$;
 \item[c)]  $F(u, \partial_xu, v, \partial_xv)=F_1(u, \partial_xu)+F_1(v, \partial_xv)$, where $F_1=F_2+F_3$ with $F_2$ being a $(p+1)$-homogeneous function and $F_3$ being a $(q+1)$-homogeneous function for $0<p<q$.
 \end{itemize}
 
The next remark will be useful for the proof of the next lemma.
\begin{rem}\label{rmk22}
For $0<|\omega|<\min\left\{1,\frac{{-a}}{b}, \frac{{-c}}{b},\frac{a_2}{b_2},\frac{c_2}{b_2}\right\}$, we note that,
$$
\begin{gathered}
I_\omega(\psi, v)=\int_{\mathbb{R}}\left[(\psi-\omega v)^2+\left(\sqrt{|c|} \psi^{\prime}-\frac{b \omega}{\sqrt{|c|}} v^{\prime}\right)^2+\left(1-|\omega|^2\right) v^2+\left(|a|-\frac{b^2 \omega^2}{|c|}\right)\left(v^{\prime}\right)^2\right. \\
\left.+c_2\left(\psi''- \frac{\omega b_2}{c_2} v'' \right)^2+ \left(a_2-\frac{\omega^2 b_2^2}{c_2}\right)(v'')^2\right]dx \geq 0
\end{gathered}
$$
and
$$
\begin{gathered}
I_\omega(\psi, v)=\int_{\mathbb{R}}\left[(v-\omega \psi)^2+\left(\sqrt{|c|} v^{\prime}-\frac{b \omega}{\sqrt{|c|}} \psi^{\prime}\right)^2+\left(1-|\omega|^2\right) \psi^2+\left(|a|-\frac{b^2 \omega^2}{|c|}\right)\left(\psi^{\prime}\right)^2\right. \\
\left.+a_2\left(v''- \frac{\omega b_2}{a_2} \psi'' \right)^2+ \left(c_2-\frac{\omega^2 b_2^2}{a_2}\right)(\psi'')^2\right]dx \geq 0
\end{gathered}
$$
where we are using that $|\omega| < \frac{\sqrt{ac}}{b}$ and $|\omega| < \frac{\sqrt{a_2c_2}}{b_2}$.
\end{rem}

 With this in hand, our first lemma ensures some boundedness for the quantity \eqref{I_w}.
\begin{lem}\label{inf}
For $0<|\omega|<\min\left\{1,\frac{{-a}}{b}, \frac{{-c}}{b},\frac{a_2}{b_2},\frac{c_2}{b_2}\right\}$, we have that

\nd i) the functional $I_{\omega}$ is nonnegative and there are positive constants
$M_1(\omega, a, a_2, b, b_2, c, c_2 )$ and $M_2(\omega, a, a_2, b, b_2, c, c_2 )$ such that
\begin{equation}\label{IG1}
M_1 \|(\psi, v)\|_{X}^{2} \leq I_\omega(\psi, v)\leq M_2
 \|(\psi, v)\|_{X}^{2}.
\end{equation}
\nd ii)  $S(\omega)$ exists, is positive, and
\begin{equation}\label{swn}
S(\omega) = \inf \left\{ J_{\omega, p}(\Psi) \; ; \; \Psi \neq 0, \ P_{\omega}(\Psi) \le 0 \right\},
\end{equation}
where $J_{\omega,p}$ is defined by 
\[
J_{\omega, p}(\Psi) = J_{\omega}(\Psi) - \frac{1}{p+2} P_{\omega}(\Psi).
\]
Here $J_{\omega}$ is defined by \eqref{J_w} and $P_{\omega}$ is given by \eqref{P_w}.
\end{lem}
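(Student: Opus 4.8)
The plan is to treat the two assertions separately, leaning on the two completed-square identities recorded in Remark~\ref{rmk22} and on the structural inequalities of Lemma~\ref{esf-lev-2021}.

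For item (i), the upper bound in \eqref{IG1} is routine: writing $I_\omega=I_1-2\omega I_2$ and applying Young's inequality to the cross terms $\psi v,\ \psi'v',\ \psi''v''$ of $I_2$, every integrand is dominated by the squares $\psi^2,(\psi')^2,(\psi'')^2,v^2,(v')^2,(v'')^2$, so $I_\omega\le M_2\|(\psi,v)\|_X^2$ with $M_2$ depending only on $\omega$ and the coefficients; nonnegativity is immediate from either identity in Remark~\ref{rmk22}. For the lower bound I would add the two identities of Remark~\ref{rmk22}: since both right-hand sides equal $I_\omega$, their sum represents $2I_\omega$ as an integral of nonnegative terms, and discarding the perfect squares leaves
\[
2I_\omega(\psi,v)\ge\int_{\R}\Big[(1-|\omega|^2)(\psi^2+v^2)+\big(|a|-\tfrac{b^2\omega^2}{|c|}\big)\big((\psi')^2+(v')^2\big)+\big(a_2-\tfrac{\omega^2 b_2^2}{c_2}\big)(v'')^2+\big(c_2-\tfrac{\omega^2 b_2^2}{a_2}\big)(\psi'')^2\Big]dx.
\]
It then remains to check that all four coefficients are strictly positive: the constraint $|\omega|<\min\{-a/b,-c/b\}$ forces, by the arithmetic--geometric mean inequality, $|\omega|<\sqrt{ac}/b$, so $|a|-b^2\omega^2/|c|>0$; likewise $|\omega|<\min\{a_2/b_2,c_2/b_2\}$ gives $|\omega|<\sqrt{a_2c_2}/b_2$, whence $a_2-\omega^2b_2^2/c_2>0$ and $c_2-\omega^2b_2^2/a_2>0$; and $|\omega|<1$ handles the zeroth-order coefficient. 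Taking $M_1=\tfrac12\min\{\dots\}>0$ yields $I_\omega\ge M_1\|(\psi,v)\|_X^2$.

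For item (ii), I would first substitute the definitions to get
\[
J_{\omega,p}(\Psi)=\Big(\tfrac12-\tfrac{1}{p+2}\Big)I_\omega(\Psi)+\Big(\tfrac{1}{p+2}N(\Psi)-K(\Psi)\Big)=\tfrac{p}{2(p+2)}I_\omega(\Psi)+\Big(\tfrac{1}{p+2}N(\Psi)-K(\Psi)\Big).
\]
By Lemma~\ref{esf-lev-2021}(v) the bracketed term is nonnegative, so $J_{\omega,p}(\Psi)\ge\tfrac{p}{2(p+2)}I_\omega(\Psi)\ge0$ by item (i). On $\mathcal{M}_\omega$ one has $P_\omega=0$, hence $J_\omega=J_{\omega,p}$ there, so $J_\omega$ is bounded below by $0$ on the constraint and $S(\omega)$ is a well-defined number. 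That $\mathcal{M}_\omega$ contains nontrivial elements follows from assumption (c): choosing $\Psi_\ast$ with $K(\Psi_\ast)>0$ (whence $N(\Psi_\ast)>0$ by (v)), the map $t\mapsto P_\omega(t\Psi_\ast)=t^2I_\omega(\Psi_\ast)-N(t\Psi_\ast)$ is positive for small $t$ and tends to $-\infty$ as $t\to\infty$ by the homogeneity bounds in (iv), so it vanishes at some $t>0$.

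To obtain $S(\omega)>0$, I would use that every nontrivial $\Psi\in\mathcal{M}_\omega$ satisfies $I_\omega(\Psi)=N(\Psi)$; combining the coercivity $M_1\|\Psi\|_X^2\le I_\omega(\Psi)$ with the growth estimate $N(\Psi)\le C(\|\Psi\|_X^{q_1+2}+\|\Psi\|_X^{q_2+2})$ of Lemma~\ref{esf-lev-2021}(vii) gives $M_1\le C(\|\Psi\|_X^{q_1}+\|\Psi\|_X^{q_2})$, and since $q_1,q_2>0$ this bounds $\|\Psi\|_X$ below by some $\delta>0$. Hence $J_\omega(\Psi)=J_{\omega,p}(\Psi)\ge\tfrac{p}{2(p+2)}M_1\delta^2>0$, so $S(\omega)>0$. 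Finally, for the characterization \eqref{swn} let $S_2$ be its right-hand side. Any $\Psi\in\mathcal{M}_\omega\setminus\{0\}$ is admissible for $S_2$ and there $J_{\omega,p}(\Psi)=J_\omega(\Psi)$, so $S_2\le S(\omega)$ at once. For the reverse, fix $\Psi\neq0$ with $P_\omega(\Psi)\le0$; then $N(\Psi)\ge I_\omega(\Psi)>0$, while $P_\omega(t\Psi)>0$ for small $t$ by (iv), so by continuity $P_\omega(t_0\Psi)=0$ for some $t_0\in(0,1]$, i.e. $t_0\Psi\in\mathcal{M}_\omega\setminus\{0\}$. The key point is that $g(t):=J_{\omega,p}(t\Psi)$ is nondecreasing: using $\langle K'(w),w\rangle=N(w)$ and Lemma~\ref{esf-lev-2021}(vi) in the form $\langle N'(t\Psi),\Psi\rangle\ge\tfrac{p+2}{t}N(t\Psi)$, differentiation gives
\[
g'(t)=\tfrac{p}{p+2}\,t\,I_\omega(\Psi)+\tfrac{1}{p+2}\langle N'(t\Psi),\Psi\rangle-\tfrac{1}{t}N(t\Psi)\ge\tfrac{p}{p+2}\,t\,I_\omega(\Psi)\ge0.
\]
Thus $J_{\omega,p}(t_0\Psi)\le J_{\omega,p}(\Psi)$, so $S(\omega)\le J_\omega(t_0\Psi)=J_{\omega,p}(t_0\Psi)\le J_{\omega,p}(\Psi)$, and infimizing over admissible $\Psi$ gives $S(\omega)\le S_2$, hence equality. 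I expect this monotonicity identity for $g$—the bookkeeping that turns Lemma~\ref{esf-lev-2021}(vi) and the Euler identity $\langle K'(w),w\rangle=N(w)$ into the clean bound $g'(t)\ge\tfrac{p}{p+2}t\,I_\omega(\Psi)$—to be the main technical point, the coercivity in (i) being comparatively routine once both identities of Remark~\ref{rmk22} are used together.
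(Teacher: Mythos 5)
Your proof is correct, but it diverges from the paper's at both halves of the lemma, in instructive ways. For the lower bound in (i) the paper does not pass through Remark~\ref{rmk22} at all: it simply applies Young's inequality termwise to the cross terms of $I_{2}$, obtaining the integrand coefficients $1-|\omega|$, $-a-b|\omega|$, $-c-b|\omega|$, $a_2-b_2|\omega|$, $c_2-b_2|\omega|$, which are positive precisely under the stated hypothesis on $\omega$; your route of summing the two completed-square identities and discarding the perfect squares is equally valid and in fact only needs the weaker conditions $|\omega|<\sqrt{ac}/b$ and $|\omega|<\sqrt{a_2c_2}/b_2$, though you should note that the second identity in Remark~\ref{rmk22} as printed carries a typo (the coefficient $\sqrt{|c|}$ in front of $v'$ should be $\sqrt{|a|}$, with the complementary term $\bigl(|c|-\tfrac{b^2\omega^2}{|a|}\bigr)(\psi')^2$); with that correction your summed inequality and positivity check go through unchanged. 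For (ii) the paper gives no argument at all, deferring to Lemma~2.8 of Esfahani--Levandosky; you have supplied the full proof, and every step checks out: the identity $J_{\omega,p}=\tfrac{p}{2(p+2)}I_\omega+\bigl(\tfrac1{p+2}N-K\bigr)$ together with Lemma~\ref{esf-lev-2021}(v) gives the lower bound, assumption (c) plus the scaling bounds (iv) give nontriviality of $\mathcal M_\omega$, the constraint $I_\omega=N$ combined with (i) and (vii) forces $\|\Psi\|_X\ge\delta>0$ and hence $S(\omega)>0$, and the monotonicity $g'(t)\ge\tfrac{p}{p+2}tI_\omega(\Psi)$ (from the Euler identity $\langle K'(w),w\rangle=N(w)$ and item (vi)) correctly yields the characterization \eqref{swn}. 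What your version buys is a self-contained proof of (ii) where the paper has only a citation; what the paper's version of (i) buys is a shorter computation with constants read off directly from the hypothesis.
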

\begin{proof} i)  Using the definition of $I_\omega$ given by \eqref{I_w}, we directly see that 
\begin{align*}\label{eqIN}
I_\omega(\psi,v)\leq&\int_{\mathbb R}
\left[(1+|\omega|)\psi^2+\left(|c|+b|\omega| \right)(\psi')^2+(1+|\omega|)v^2+\left(|a|+b|\omega|
\right)(v')^2\right. \\
&\left.+\left(c_2+b_2|\omega|
\right)(\psi'')^2+\left(a_2+b_2|\omega|
\right)(v'')^2
\right]dx \nonumber\\
\leq& \max\left(1+|\omega|,|c|+b|\omega| , |a|+b|\omega|, a_2+b_2|\omega|, c_2+b_2|\omega|)
\right)\|(\psi,v)\|^2_{X}.
\end{align*}
On the other hand, from Young's inequality, we also have
\begin{align*}
I_\omega(\psi,v)\geq&\int_{\mathbb R}
\left[(1-|\omega|)\psi^2+\left(-c-b|\omega| \right)(\psi')^2+(1-|\omega|)v^2+\left(-a-b|\omega|
\right)(v')^2\right. \\
&\left.+\left(c_2-b_2|\omega|
\right)(\psi'')^2+\left(a_2-b_2|\omega|
\right)(v'')^2
\right]dx \nonumber\\
 \geq& \min\left(1-|\omega|,-c-b|\omega|, -a-b|\omega|, a_2-b_2|\omega|, c_2-b_2|\omega|)
\right)\|(\psi,v)\|^2_{X},
\end{align*}
showing that the inequality (\ref{IG1}) holds.

\nd ii) The proof of this item is analogous to the proof of \cite[Lemma 2.8]{Esfahani-Levandosky-2021}.
\end{proof}
\begin{rem}
 Characterization of $S(\omega)$ in the homogeneous case is given by
\[
S(\omega)=\inf \left\{ J_{\omega}(\Psi) \; ; \; \Psi \neq 0, \ P_{\omega}(\Psi) \le 0 \right\},
\]
since we have
\[
N(\Psi)=(p+2)K(\Psi), 
\]
which implies that 
\[
J_{\omega}(\Psi)= \frac{p}{2(p+2)}I_{\omega}(\Psi).
\]
In this case, we already have that 
\[
S(\omega)= \inf\left\{ \frac{p}{2(p+2)}I_{\omega}(\Psi); \Psi \ne 0, \ P_{\omega}(\Psi)\leq 0\right\}.
\]
\end{rem}
In the next result, we establish that the minimizers for $S(\omega)$ correspond to weak solutions for the higher-order Boussinesq system.
\begin{thm}
Let $(\psi_0, v_0)\in X$ be such that $P_{\omega}(\psi_0, v_0)=0$ and $J_{\omega}(\psi_0, v_0)=S(\omega)$. Then $(\psi_0, v_0)$ is a weak solution of (\ref{trav-eqs}).
\end{thm}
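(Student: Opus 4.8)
The plan is to treat $(\psi_0,v_0)$ as a minimizer of $J_\omega$ subject to the single constraint $P_\omega=0$, extract a Lagrange multiplier, and then prove that this multiplier must vanish. Once it vanishes we have $J_\omega'(\psi_0,v_0)=0$, and unwinding the definitions shows that this identity is precisely the system \eqref{trav-eqs}.

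First I would record the regularity and the meaning of criticality. Since $I_\omega$ is a continuous quadratic form and, by hypothesis (b) together with Lemma \ref{esf-lev-2021}(vii), the functionals $K$ and $N$ are $C^1$ on $X$, both $J_\omega$ and $P_\omega$ are $C^1$. Writing $J_\omega=\tfrac12 I_\omega - K$ and differentiating, the equation $J_\omega'(\psi,v)=0$ reads $\tfrac12 I_\omega'(\psi,v)=K'(\psi,v)=(H_1,H_2)^t$; expanding $\tfrac12 I_\omega'$ as the linear operator associated with the bilinear form defining $I_\omega$ reproduces, line by line, the two equations of \eqref{trav-eqs}. Hence it suffices to prove $J_\omega'(\psi_0,v_0)=0$.

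The key computation is to evaluate $\langle P_\omega'(\psi_0,v_0),(\psi_0,v_0)\rangle$. Since $\langle I_\omega'(w),w\rangle=2I_\omega(w)$, we get $\langle P_\omega'(w),w\rangle=2I_\omega(w)-\langle N'(w),w\rangle$. On the constraint set we have $I_\omega(\psi_0,v_0)=N(\psi_0,v_0)$, and Lemma \ref{esf-lev-2021}(vi) yields $\langle N'(w),w\rangle\ge (p+2)N(w)$. Combining these,
\[
\langle P_\omega'(\psi_0,v_0),(\psi_0,v_0)\rangle \le 2I_\omega(\psi_0,v_0)-(p+2)I_\omega(\psi_0,v_0)=-p\,I_\omega(\psi_0,v_0).
\]
Because $J_\omega(\psi_0,v_0)=S(\omega)>0$ forces $(\psi_0,v_0)\neq 0$, Lemma \ref{inf}(i) gives $I_\omega(\psi_0,v_0)>0$, so the pairing is strictly negative. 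In particular $P_\omega'(\psi_0,v_0)\neq0$, so $0$ is a regular value of $P_\omega$ near $(\psi_0,v_0)$ and the Lagrange multiplier rule applies: there is $\lambda\in\R$ with $J_\omega'(\psi_0,v_0)=\lambda\,P_\omega'(\psi_0,v_0)$.

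Finally, pairing this identity with $(\psi_0,v_0)$ and using $\langle J_\omega'(\psi_0,v_0),(\psi_0,v_0)\rangle=P_\omega(\psi_0,v_0)=0$ gives $0=\lambda\,\langle P_\omega'(\psi_0,v_0),(\psi_0,v_0)\rangle$; since the last pairing is strictly negative, we conclude $\lambda=0$. Therefore $J_\omega'(\psi_0,v_0)=0$, and by the first step $(\psi_0,v_0)$ is a weak solution of \eqref{trav-eqs}. The only real obstacle I anticipate is the bookkeeping in the first step, namely checking cleanly that the abstract identity $\tfrac12 I_\omega'=K'$ matches \eqref{trav-eqs} component by component and that the $C^1$ regularity required for the multiplier rule is genuinely supplied by the growth estimates; the sign computation itself is short and is the real heart of the argument.
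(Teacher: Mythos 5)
Your proposal is correct and follows essentially the same route as the paper: invoke the Lagrange multiplier rule, show $\left<P'_{\omega}(\psi_0,v_0),(\psi_0,v_0)\right>\leq -p\,I_{\omega}(\psi_0,v_0)<0$ using Lemma \ref{esf-lev-2021}(vi) and the constraint $I_{\omega}=N$, and conclude $\lambda=0$ from $\left<J'_{\omega}(\psi_0,v_0),(\psi_0,v_0)\right>=P_{\omega}(\psi_0,v_0)=0$. You are in fact slightly more careful than the paper, which does not explicitly verify that $P'_{\omega}(\psi_0,v_0)\neq 0$ (needed to apply the multiplier rule) or that $I_{\omega}(\psi_0,v_0)>0$.
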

\begin{proof} From the Lagrange multiplier theorem, there exists $\lambda\in \R$ such that $$J'_{\omega}(\psi_0, v_0)=\lambda P'_{\omega}(\psi_0, v_0).$$ 
On the other hand,
\begin{align*}
 \left<P'_{\omega}(\psi_0, v_0), (\psi_0, v_0)\right>&= \left<I'_{\omega}(\psi_0, v_0), (\psi_0, v_0)\right>-\left<N'(\psi_0, v_0), (\psi_0, v_0)\right>
 \\&=2 I_{\omega}(\psi_0, v_0)-\left<N'(\psi_0, v_0), (\psi_0, v_0)\right>,\\
&\leq 2I_{\omega}(\psi_0, v_0)-(p+2)N(\psi_0, v_0),\\
&\leq -p I_{\omega}(\psi_0, v_0)<0,
\end{align*}
where in the first inequality we have used Lemma \ref{esf-lev-2021} item (vi) and in the third inequality that $I_{\omega}(\psi, v)=N(\psi, v)$. Thus, we must have $\lambda=0$, since $ P_{\omega}(\psi_0, v_0)=\left<J'_{\omega}(\psi_0, v_0), (\psi_0, v_0)\right>=0$, completing the proof.
\end{proof}

Before going further, let us give an important property of the minimizing sequences for $S(\omega)$.
\begin{lem}\label{lem-r} If $((\psi_n, v_n))_n$ is any minimizing sequences for $S(\omega)$, then  $(I_{\omega}(\psi_n, v_n))_n$ is a bounded sequence. Moreover, there is $L>0$ such that  $I_{\omega}(\psi_n, v_n) \to L$, as $n \to \infty$, up to a subsequence.
 \end{lem}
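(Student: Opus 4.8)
The plan is to reduce everything to a comparison between $I_\omega$ and $J_\omega$ along the sequence, using the alternative characterization of $S(\omega)$ via the auxiliary functional $J_{\omega,p}$ together with the structural inequalities of Lemma \ref{esf-lev-2021}. First I would recall that, by definition of a minimizing sequence for \eqref{sw}, one has $(\psi_n, v_n)\in \M_\omega$, so that $P_\omega(\psi_n, v_n)=0$ (equivalently $I_\omega(\psi_n,v_n)=N(\psi_n,v_n)$) and $J_\omega(\psi_n, v_n)\to S(\omega)$. Since $P_\omega$ vanishes on $\M_\omega$, there $J_\omega=J_{\omega,p}$, and a direct expansion of $J_{\omega,p}$ gives
\[
J_{\omega,p}(\psi, v)=\frac{p}{2(p+2)}\,I_\omega(\psi, v)+\left(\frac{1}{p+2}N(\psi, v)-K(\psi, v)\right).
\]

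For the boundedness, I would invoke Lemma \ref{esf-lev-2021}(v), by which $N\ge (p+2)K$, so the bracketed term above is nonnegative. Hence $\tfrac{p}{2(p+2)}I_\omega(\psi_n, v_n)\le J_\omega(\psi_n, v_n)$, and since the right-hand side converges to $S(\omega)$ it is bounded; combined with the nonnegativity of $I_\omega$ from Lemma \ref{inf}(i), this shows $(I_\omega(\psi_n, v_n))_n$ is a bounded sequence of nonnegative reals. Bolzano--Weierstrass then furnishes a subsequence with $I_\omega(\psi_{n_k}, v_{n_k})\to L\ge 0$, which settles the first assertion and the existence of the limit.

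The crux is showing $L>0$, and this is the step I expect to be the main obstacle, precisely because in the non-homogeneous case $J_\omega$ is not merely a fixed multiple of $I_\omega$: one must control the excess term $\tfrac{1}{p+2}N-K$ from above. I would argue by contradiction, assuming $L=0$. Then the norm equivalence in Lemma \ref{inf}(i) forces $\|(\psi_{n_k}, v_{n_k})\|_X\to 0$, and the growth estimate of Lemma \ref{esf-lev-2021}(vii) yields
\[
0\le \frac{1}{p+2}N(\psi_{n_k}, v_{n_k})-K(\psi_{n_k}, v_{n_k})\le C\left(I_\omega(\psi_{n_k}, v_{n_k})^{(q_1+2)/2}+I_\omega(\psi_{n_k}, v_{n_k})^{(q_2+2)/2}\right).
\]
Because $q_1,q_2>0$, the exponents $(q_i+2)/2$ exceed $1$, so both this excess term and the leading term $\tfrac{p}{2(p+2)}I_\omega$ tend to $0$ along the subsequence. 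Consequently $J_\omega(\psi_{n_k}, v_{n_k})\to 0$, contradicting $J_\omega(\psi_{n_k}, v_{n_k})\to S(\omega)>0$ from Lemma \ref{inf}(ii). Therefore $L>0$, which completes the argument. The only delicate point is the uniform control of the nonlinear excess by a superquadratic power of $I_\omega$; everything else is an elementary consequence of the convergence $J_\omega(\psi_n,v_n)\to S(\omega)$ and the sign of $\tfrac{1}{p+2}N-K$.
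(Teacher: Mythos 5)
Your proof is correct and follows essentially the same route as the paper: on the constraint $I_\omega=N$, combine $K\le \frac{1}{p+2}N$ (Lemma \ref{esf-lev-2021}(v)) with $J_\omega(\psi_n,v_n)\to S(\omega)$ to obtain $\frac{p}{2(p+2)}I_\omega(\psi_n,v_n)\le S(\omega)+o(1)$, hence boundedness and a convergent subsequence. Your contradiction argument for $L>0$ --- using the norm equivalence of Lemma \ref{inf}(i), the growth bound of Lemma \ref{esf-lev-2021}(vii) to force $J_\omega(\psi_{n_k},v_{n_k})\to 0$, and the positivity of $S(\omega)$ from Lemma \ref{inf}(ii) --- actually supplies a justification that the paper omits, since it merely asserts that the subsequential limit is positive.
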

\begin{proof} Let $(\psi_n, v_n)\in X$ be such that $I_{\omega}(\psi_n,v_n)=N(\psi_n,v_n)$ and that $J_{\omega}(\psi_n, v_n)=S(\omega) +o(1)$, as $n\to \infty$. Now, we note that
\begin{align*}
\frac12 I_{\omega}(\psi_n, v_n)&= J_{\omega}(\psi_n, v_n)+K(\psi_n, v_n) \\
&\leq J_{\omega}(\psi_n, v_n)+\frac1{p+2}N(\psi_n, v_n)  \\
&\leq S(\omega)+\frac1{p+2}I_{\omega}(\psi_n, v_n)+o(1),
\end{align*}
which implies that $||(\psi_n, v_n)||_X$ is bounded, since
\[
\left( \frac{p}{2(p+2)}\right)I_{\omega}(\psi_n, v_n)\leq S(\omega)+o(1).
\]
So, consequently, we may assume that the sequence (or a subsequence of) $\{I_{\omega}(\psi_n, v_n)\}_n$ converges to some positive number $L$, showing the lemma. 
\end{proof}

Classically, to establish the existence of traveling-wave solutions, we will use the \textit{Lion's concentration-compactness principle} \cite{Lions-1984a,Lions-1984b}. We will apply this principle to the measure defined by the density $\rho$ given by
\begin{multline}\label{RHO}
\rho(\psi,v) =\psi^2 - c(\psi')^{2} + c_2(\psi'')^{2}+
v^2-a( v')^2+a_2( v'')^2 -2\omega\left(\psi v+b\psi' v'+b_2 v''\psi'' \right).
\end{multline}
We note from the proof of inequality (\ref{IG1}) that
\begin{equation}\label{IG2}
M_1 \sigma(\psi, v) \leq \rho(\psi, v)\leq M_2 \sigma(\psi, v),
\end{equation}
where $\sigma$ is given by
\[
\sigma(\psi, v)=\psi^2+v^2+(\psi')^2+(v')^2 +(\psi'')^2+(v'')^2.
\]
If we had a minimizing sequence $(\psi_n, v_n)_n\subset X$ for $S(\omega)$, meaning that $P(\psi_n, v_n)=0$ and $J_{\omega}(\psi_n, v_n) \to S(\omega)$, we consider the sequence of measures $\{\nu_n\}_n$ given by
\[
\nu_n(A)= \int_A \rho(\psi_n, v_n) \,dx,
\]
which is such that $\nu_n(\R) \to L$, according to the Lemma \ref{lem-r}. Now, from Lion's concentration-compactness principle, there exists a subsequence of $\{\nu_{n}\}_n$ (which will be denoted with the same index) such that one of the following three conditions holds:

\vspace{0.2cm}

\nd \textbf{(1) Compactness}: There exists a sequence $\{x_{n}\}_n\subset \mathbb{R}$ such that for any $\gamma>0$ there exists a radius $R>0$ such that
\begin{equation*}
\int_{B_R(x_{n})}d\nu_{n}\geq L- \gamma,
\end{equation*}
 for all $n$.
 
\nd  \textbf{(2)Vanishing}: For all $R>0$, it holds:
\begin{equation*}
\lim\limits_{n\rightarrow\infty} \left(\sup\limits_{y\in \mathbb{R}}\int_{\left|x-y\right|\leq R}d\nu_{n}\right)=0.
\end{equation*}
\nd \textbf{(3) Dichotomy}: There exists $\theta\in \left(0, L\right)$ such that for any $\gamma>0$, there exist a positive number $R$ and a sequence $\{x_{n}\}_n\subset \R $ with the following property: Given $R^{\prime}>R$, there are nonnegative measures $\nu_{n,1}$ and $\nu_{n,2}$ such that:

a) \ $0\leq \nu_{n,1}+\nu_{n,2}\leq \nu_{n}$;\\

b) \  $supp(\nu_{n,1})\subset B_R(x_{n})$ and $supp(\nu_{n,2})\subset \R\setminus B_{R^{\prime}}(x_{n})$;\\

c) \ $ \limsup\limits_{n \rightarrow \infty}\left( \left|\theta -\int_{\R}d\nu_{n,1}\right|+\left|(L -\theta) -\int_{\R}d\nu_{n,2}\right|\right)\leq \gamma$.

The first step is to rule out vanishing.
\begin{lem}\label{n-v}
Vanishing is impossible.
\end{lem}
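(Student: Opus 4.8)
The plan is to argue by contradiction: assume vanishing occurs and show that it forces $I_{\omega}(\psi_n,v_n)\to 0$, which is impossible because Lemma \ref{lem-r} gives $I_{\omega}(\psi_n,v_n)\to L>0$. First I would exploit the two-sided pointwise comparison \eqref{IG2}. Since $\rho(\psi_n,v_n)\ge M_1\,\sigma(\psi_n,v_n)$, for every $R>0$ and every center $y$ we have
\[
\int_{|x-y|\le R}\sigma(\psi_n,v_n)\,dx \le \frac{1}{M_1}\int_{|x-y|\le R} d\nu_n,
\]
so the vanishing hypothesis on $\nu_n$ transfers directly to the full $H^2$-density:
\[
\lim_{n\to\infty}\ \sup_{y\in\mathbb{R}}\int_{|x-y|\le R}\sigma(\psi_n,v_n)\,dx = 0,\qquad \text{for every } R>0.
\]
In particular the local $L^2$-mass of each of the scalar functions $\psi_n,\psi_n',v_n,v_n'$ (and of $\psi_n'',v_n''$) tends to zero, uniformly in $y$.

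Next I would invoke Lions' vanishing lemma \cite{Lions-1984a,Lions-1984b} in dimension one. By Lemma \ref{inf}(i) together with the boundedness of $I_{\omega}(\psi_n,v_n)$ established in Lemma \ref{lem-r}, the sequence $(\psi_n,v_n)$ is bounded in $X=H^2(\mathbb{R})\times H^2(\mathbb{R})$; hence $\psi_n,v_n$ are bounded in $H^2(\mathbb{R})$ and $\psi_n',v_n'$ are bounded in $H^1(\mathbb{R})$. Combining this boundedness with the uniform vanishing of the local $L^2$-masses obtained above, the lemma yields
\[
\psi_n,\ \psi_n',\ v_n,\ v_n' \longrightarrow 0 \quad\text{strongly in } L^p(\mathbb{R})\ \text{ for every } 2<p<\infty.
\]

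Then I would estimate $N$. Writing $w_n=(\psi_n,\psi_n',v_n,v_n')$, we recall $N(\psi_n,v_n)=\int_{\mathbb{R}}G(w_n)\,dx$, and Lemma \ref{esf-lev-2021}(iii) provides the pointwise growth bound $|G(w_n)|\le C\big(|w_n|^{q_1+2}+|w_n|^{q_2+2}\big)$. Using an elementary inequality on $\mathbb{R}^4$ (for an exponent $s\ge 1$, $|w_n|^{s}$ is controlled by the sum of the $s$-th powers of the components), this reduces $|N(\psi_n,v_n)|$ to a finite sum of $L^{q_i+2}$-norms of the four entries of $w_n$. Assumption (b) guarantees $q_i+2>2$, so every exponent falls in the range $2<p<\infty$ covered by the previous step, whence each such norm tends to zero and $N(\psi_n,v_n)\to 0$. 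Finally, since $(\psi_n,v_n)\in\mathcal{M}_{\omega}$ means $P_{\omega}(\psi_n,v_n)=0$, i.e. $I_{\omega}(\psi_n,v_n)=N(\psi_n,v_n)$, we get $I_{\omega}(\psi_n,v_n)\to 0$, contradicting $I_{\omega}(\psi_n,v_n)\to L>0$. This rules out vanishing.

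I expect the main obstacle to be the careful application of the vanishing lemma to the \emph{first derivatives} $\psi_n',v_n'$ (not just to $\psi_n,v_n$), since the control of $N$ genuinely involves the $L^{q_i+2}$-decay of all four components of $w_n$, and the verification that the growth exponents $q_1+2,q_2+2$ furnished by assumption (b) lie strictly above $2$, so that the $L^p$-decay is actually available for every term in the bound for $N$. The bookkeeping that passes from the vanishing of the vector density $\sigma$ to the decay of each scalar component is routine but must be carried out for all six terms of $\sigma$.
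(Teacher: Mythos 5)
Your proposal is correct and follows essentially the same route as the paper: vanishing forces the $L^p$-norms ($p>2$) of $\psi_n,\psi_n',v_n,v_n'$ to zero, which together with the growth bound of Lemma \ref{esf-lev-2021}(iii)/(vii) kills $N(\psi_n,v_n)$, and the constraint $I_{\omega}(\psi_n,v_n)=N(\psi_n,v_n)$ then yields the contradiction. The only cosmetic differences are that you invoke Lions' vanishing lemma as a black box where the paper reproves it via the covering-and-Cauchy--Schwarz argument, and that you conclude against $I_{\omega}\to L>0$ from Lemma \ref{lem-r} rather than against $S(\omega)>0$; both endpoints are equivalent here.
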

\begin{proof} We argue by contradiction. Assuming that vanishing is true, so for $R=1$, we have that,
\begin{equation*}
\lim\limits_{n\rightarrow \infty}\sup\limits_{y \in \mathbb{R}}\int_{B_1(y)}\rho(\psi_{n}, v_n)\,dx=0.
\end{equation*}
Thus, given $\ep>0$, there is $n_0 \in \N$ such that for $n\geq n_0$ we have that
\begin{equation*}
\sup\limits_{y \in \mathbb{R}}\int_{B_1(y)}\rho(\psi_{n}, v_n)\,dx<\ep.
\end{equation*}
Now, we recall that  $H^{1}(\mathbb{R})\hookrightarrow C_{b}(\mathbb{R})$, so, we have that
\begin{equation*}
\int_{B_1(y)}\left|\psi_{n}\right|^{p+2}\,dx\leq\left(\int_{B_1(y)}\left|\psi_{n}\right|^{2}\,dx\right)^{\frac{1}{2}}
\left(\int_{B_1(y)}\left|\psi_{n}\right|^{2\left(p+1\right)}dx\right)^{\frac{1}{2}}.
\end{equation*}
We can cover $\mathbb{R}$ with intervals of the form $[k, k + 1]$, where $k$ is an integer, ensuring that any point in $\mathbb{R}$ is contained in at most two intervals. By summing over these intervals and applying the inequality above, we obtain the following inequality:
\begin{equation}\label{DIFX}
\begin{split}
\int_{\R}(\left|\psi_{n}\right|^{p+2}+\left|v_{n}\right|^{p+2})\,dx & \leq  2\sup\limits_{y\in \R}\left(\int_{B_1(y)}\rho(\psi_{n}, v_n)\,dx\right)^{\frac{1}{2}}\left(\left\|\psi_{n}\right\|_{L^{2(p+1)}\left(\R\right)} +\left\|v_{n}\right\|_{L^{2(p+1)}\left(\R\right)}\right) \\
&\leq 2C\sup\limits_{y\in \R}\left(\int_{B_1(y)}\rho(\psi_{n}, v_n)\,dx\right)^{\frac{1}{2}}\left(\left\|\psi_{n}\right\|^{(p+1)}_{H^{1}(\R)}
+\left\|v_{n}\right\|^{(p+1)}_{H^{1}(\R)}\right) .
\end{split}
\end{equation}
A similar argument gives us that
\begin{equation}\label{DIFXX}
\begin{split}
\int_{\R}(\left|\partial_{x}\psi_{n}\right|^{p+2}+&\left|\partial_{x}v_{n}\right|^{p+2})\,dx\\
&\leq2C\sup\limits_{y\in R}\left(\int_{B_1(y)}\rho(\psi_{n}, v_n)\,dx\right)^{\frac{1}{2}}\left(\left\|\partial_{x}\psi_{n}\right\|^{\left(p+1\right)}_{H^{1}(\mathbb{R})}
+\left\|\partial_{x}v_{n}\right\|^{\left(p+1\right)}_{H^{1}(\mathbb{R})}\right).
\end{split}
\end{equation}
In other words, due to \eqref{DIFX} and \eqref{DIFXX}, we have $\psi_n, v_n, (\psi_n)_{x}, (v_n)_x, (\psi_n)_{xx}$ and $(v_n)_{xx}$ going to $0$ in $L^q(\R)$, for $q>2$. From this fact, we conclude that
\[
\lim_{n\to \infty}K(\psi_n, v_n)= \lim_{n\to \infty}N(\psi_n, v_n)=0.
\]
In fact, from Lemma \ref{esf-lev-2021} (vii), we have
\begin{multline*}
|K(\psi_n, v_n)|+|N(\psi_n, v_n)|\\ \leq2C\sup\limits_{y\in R}\left(\int_{B_1(y)}\rho(\psi_{n}, v_n)\,dx\right)^{\frac{1}{2}}\left(\left\|\psi_{n}\right\|^{q_1+1}_{H^{2}(\mathbb{R})}
+\left\|v_{n}\right\|^{q_1+1}_{H^{2}(\mathbb{R})}+\left\|\psi_{n}\right\|^{q_2+1}_{H^{2}(\mathbb{R})}
+\left\|v_{n}\right\|^{q_2+1}_{H^{2}(\mathbb{R})}\right),
\end{multline*}
where we are replacing $p$ with $q_j$ in previous inequality. In other words, we have shown that $\lim_{n\to \infty}I_{\omega}(\psi_n, v_n)=0$ due to the fact that $P(\psi_n, v_n)=0$. Moreover, we also have 
\[
S(\omega)=\lim_{n\to \infty} J_{\omega}(\psi_n, v_n)=0,
\]
which is a contradiction. So, we see that vanishing is ruled out, and the Lemma \ref{n-v} is shown. 
\end{proof}

Now, we are interested in ruling out dichotomy. In this case, we can choose a sequence $\gamma_{n}\rightarrow 0$ and the corresponding sequence $R_{n} \rightarrow \infty$, such that, passing to a subsequence with the same index, we can assume:
\begin{equation}\label{D}
supp(\nu_{n,1})\subset B_{R_n}(x_n), \quad supp(\nu_{n,2})\subset \R \setminus B_{2R_n}(x_n)
\end{equation}
and
\begin{equation}\label{E}
\limsup\limits_{n\rightarrow \infty}\left(  \left|\theta-\int_{\R}d\nu_{n,1}\right|+\left|(L-\theta)- \int_{\R}d\nu_{n,2}   \right|  \right)=0.
\end{equation}
Next, we establish a splitting result, 
\begin{lem}\label{split} (Splitting of a minimizing sequence)
Fix $\phi \in C^{\infty}_{0}(\mathbb{R}, \mathbb{R}^{+})$ such that $supp(\phi)\subset(-2, 2)$ and $\phi \equiv 1 \text{ on } (-1, 1)$ and define
\begin{equation*}
\psi_{n,1}=\psi_{n}\phi_{n}, \quad  \psi_{n,2}=\psi_{n}(1-\phi_{n}), \quad
 v_{n,1}=v_{n}\phi_{n},  \quad v_{n,2}=v_{n}(1-\phi_{n}),
\end{equation*}
where $\phi_n$ is given by
\begin{equation*}
\phi_{n}(x)= \phi\left(\frac{x-x_{n}}{R_{n}}\right).
\end{equation*}
Then, as  $n\to \infty$, we have the following splitting
\begin{align*}
I_{\omega}(\psi_{n}, v_n) & =I_{\omega}(\psi_{n,1}, v_{n,1})+ I_{\omega}( \psi_{n,2}, v_{n,2}) +o(1),\\
K(\psi_{n}, v_{n})&=K(\psi_{n,1}, v_{n,1})+  K(\psi_{n,2}, v_{n,2})+o(1), \\
J_{\omega}(\psi_{n}, v_n) & =J_{\omega}(\psi_{n,1}, v_{n,1})+ J_{\omega}( \psi_{n,2}, v_{n,2}) +o(1),\\
P_{\omega}(\psi_{n}, v_{n})&=P_{\omega}(\psi_{n,1}, v_{n,1})+  P_{\omega}(\psi_{n,2}, v_{n,2})+o(1),\\
N(\psi_{n}, v_{n})&=N(\psi_{n,1}, v_{n,1})+ 
N(\psi_{n,2}, v_{n,2})+o(1).
\end{align*}
\end{lem}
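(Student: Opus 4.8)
The plan is to exploit the fact that the cut-off $\phi_n$ is identically $0$ or $1$ outside the transition annulus $A_n=\{x:\ R_n\le |x-x_n|\le 2R_n\}$, so that every splitting error localizes on $A_n$, where the fields carry vanishing mass. The first and decisive step is therefore to show
\[
\int_{A_n}\sigma(\psi_n,v_n)\,dx \longrightarrow 0 \qquad(n\to\infty).
\]
This follows from the dichotomy data: since $\nu_{n,1}\le \nu_n$ is supported in $B_{R_n}(x_n)$ and $\nu_{n,2}\le\nu_n$ in $\mathbb R\setminus B_{2R_n}(x_n)$ with masses tending to $\theta$ and $L-\theta$ by \eqref{D}--\eqref{E}, while $\nu_n(\mathbb R)\to L$, one gets $\nu_n(A_n)\to 0$; the equivalence \eqref{IG2} then converts the vanishing of $\int_{A_n}\rho(\psi_n,v_n)=\nu_n(A_n)$ into that of $\int_{A_n}\sigma(\psi_n,v_n)$. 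Throughout I use that $\|(\psi_n,v_n)\|_X$ is bounded, by Lemma \ref{lem-r}.

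Second, I would treat the quadratic functional $I_\omega$. By the Leibniz rule the decomposition is \emph{exactly} additive on each derivative, $\psi_n^{(j)}=\psi_{n,1}^{(j)}+\psi_{n,2}^{(j)}$ for $j=0,1,2$ (and likewise for $v_n$), so expanding the quadratic form gives
\[
I_\omega(\psi_n,v_n)=I_\omega(\psi_{n,1},v_{n,1})+I_\omega(\psi_{n,2},v_{n,2})+B\big((\psi_{n,1},v_{n,1}),(\psi_{n,2},v_{n,2})\big),
\]
where $B$ is the associated symmetric bilinear form. Each product appearing in $B$ pairs a quantity carrying the factor $\phi_n$ (supported in $|x-x_n|<2R_n$) with one carrying $1-\phi_n$ (supported in $|x-x_n|>R_n$), hence is supported in $A_n$; on $A_n$ each factor is dominated by the $H^2(A_n)$-norm of $(\psi_n,v_n)$, the $\phi_n',\phi_n''=O(R_n^{-1}),O(R_n^{-2})$ contributions being harmless, so Cauchy--Schwarz together with Step 1 forces $B=o(1)$. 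This yields the splitting of $I_\omega$.

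Third, and this is where the main difficulty lies, I would handle the nonlinear functionals $K(\psi,v)=\int F(\psi,\psi',v,v')\,dx$ and $N(\psi,v)=\int G(\psi,\psi',v,v')\,dx$. Additivity fails because $F,G$ are nonlinear, but the geometry of $\phi_n$ rescues the argument: on $\{|x-x_n|<R_n\}$ we have $\phi_n\equiv1$, so $(\psi_{n,1},v_{n,1})=(\psi_n,v_n)$ and $(\psi_{n,2},v_{n,2})=(0,0)$, while on $\{|x-x_n|>2R_n\}$ the roles reverse; using $F(0)=G(0)=0$, the integrands of $K$ and $N$ for the full sequence and for each of the two pieces coincide on these two bulk regions. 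Only the integrals over $A_n$ remain, and for these I invoke the growth bounds of Lemma \ref{esf-lev-2021}(iii), namely $|F|,|G|\le C(|w|^{q_1+2}+|w|^{q_2+2})$ with $w=(\psi,\psi',v,v')$. Combined with the embedding $H^1(\mathbb R)\hookrightarrow L^{q+2}(\mathbb R)$ (after localizing by the cut-offs) and $\int_{A_n}\sigma(\psi_n,v_n)\to 0$, each of $\int_{A_n}F$ and $\int_{A_n}G$, evaluated at $(\psi_n,v_n)$ and at both pieces, tends to $0$. Hence $K$ and $N$ split with $o(1)$ error. The principal obstacle is precisely making these nonlinear annulus estimates uniform in $n$; it is resolved by the uniform growth control of Lemma \ref{esf-lev-2021}(iii)/(vii) and the vanishing $H^1(A_n)$-mass.

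Finally, the splittings of $J_\omega=\tfrac12 I_\omega-K$ and $P_\omega=I_\omega-N$ follow immediately by linearity from the three splittings already established, which completes the proof.
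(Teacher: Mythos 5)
Your proposal is correct and follows essentially the same route as the paper: both arguments reduce everything to the vanishing of the mass $\int_{A_n}\rho(\psi_n,v_n)\,dx$ on the transition annulus (derived from the dichotomy conditions and $\nu_n(\mathbb{R})\to L$), then observe that the splitting errors for $I_\omega$, $K$ and $N$ are supported in $A_n$ and controlled there via the equivalence \eqref{IG2} and the growth bounds of Lemma \ref{esf-lev-2021}(iii), with $J_\omega$ and $P_\omega$ following by linearity. Your treatment of the quadratic cross terms and the cut-off derivative contributions is in fact slightly more explicit than the paper's, but the underlying argument is the same.
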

\begin{proof} 
Let $A(n)$  be the annulus $A(n)= B_{2R_n}(x_n)\setminus B_{R_n}(x_n)$. Then, we have,
\begin{align*}
\int_{A(n)}\rho(\psi_{n}, v_n)\,dx
=&\left\{  \int_{\R}- \int_{B_{R_n}(x_n)}- \int_{\R \setminus B_{2R_n}(x_n)}\right\}\rho(\psi_{n}, v_n)\,dx\\
 =& \int_{\R} \rho(\psi_{n}, v_n)\,dx-L +\theta-  \int_{B_{R_n}(x_n)}\rho(\psi_{n}, v_n)\,dx+\left(L-\theta\right)\\
 & - \int_{\R \setminus B_{2R_n}(x_n)} \rho(\psi_{n}, v_n)\,dx\\
\leq& \int_{\R} \rho(\psi_{n}, v_n)\,dx-L +\theta-  \int_{supp(\nu_{n, 1})}d\nu_{n,1}+\left(L-\theta\right)
- \int_{supp(\nu_{n, 2})} d\nu_{n,2}\\
 \leq&  \left(  \int_{\R} \rho(\psi_{n}, v_n)\,dx-L\right)+\left|\theta-\int_{\R}d\nu_{n,1}\right|+\left|(L-\theta)- \int_{\R}d\nu_{n,2}\right|.
\end{align*}
So, from Lemma \ref{lem-r} we have 
$$
\lim_{n\to \infty}\int_{\R} \rho(\psi_{n}, v_n)\,dx\rightarrow L.
$$ 
This convergence, together with conditions \eqref{D} and \eqref{E}, gives that 
\begin{equation} \label{r-n}
\limsup\limits_{n\rightarrow \infty}\left(\int_{A(n)}\rho(\psi_{n}, v_n)\,dx\right)=0.
\end{equation}
Using the previous fact, we conclude that
\[
\left| I_{\omega}(\psi_{n}, v_n) -I_{\omega}(\psi_{n,1}, v_{n,1})- I_{\omega}( \psi_{n,2}, v_{n,2}) \right|= \int_{A(n)}\rho(\psi_{n}, v_n) \,dx=o(1). 
\]
On the other hand, we also have that
\begin{multline*}
\left| K(\psi_{n}, v_n) -K(\psi_{n,1}, v_{n,1})- K( \psi_{n,2}, v_{n,2}) \right|\\
\leq C\int_{A(n)}(|(\psi_{n}, \partial_x\psi_{n}, v_n, \partial_x v_n)|^{q_1+2}+ |(\psi_{n}, \partial_x\psi_{n}, v_n, \partial_x v_n)|^{q_2+2})=o(1),
\end{multline*}
where we are using the estimate (\ref{r-n}) and Lemma \ref{esf-lev-2021} (iii). 
Other estimates follow from a similar argument. 
\end{proof}

With this in hand, we will prove that the dichotomy does not hold.
\begin{lem}\label{n-d}
Dichotomy is not possible.
\end{lem}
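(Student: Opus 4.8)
The plan is to derive a contradiction from the existence of the dichotomy splitting, exploiting that both resulting pieces are nontrivial while their functionals add up to $S(\omega)$. First I would invoke the Splitting Lemma~\ref{split} to write, along the minimizing sequence,
\[
I_\omega(\psi_n,v_n)=I_\omega(\psi_{n,1},v_{n,1})+I_\omega(\psi_{n,2},v_{n,2})+o(1),
\]
and likewise for $K$, $N$, $P_\omega$, and $J_\omega$; since $J_{\omega,p}=J_\omega-\frac1{p+2}P_\omega$, the functional $J_{\omega,p}$ splits as well. Because $P_\omega(\psi_n,v_n)=0$ we have $J_\omega(\psi_n,v_n)=J_{\omega,p}(\psi_n,v_n)$, so
\[
S(\omega)=\lim_{n\to\infty}\big[J_{\omega,p}(\psi_{n,1},v_{n,1})+J_{\omega,p}(\psi_{n,2},v_{n,2})\big].
\]
From the dichotomy parameter $\theta\in(0,L)$ together with \eqref{E} and the annulus estimate \eqref{r-n} used in Lemma~\ref{split}, I would record that $I_\omega(\psi_{n,1},v_{n,1})\to\theta>0$ and $I_\omega(\psi_{n,2},v_{n,2})\to L-\theta>0$; in particular both pieces are eventually nonzero.

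The two key lower bounds are as follows. On the one hand, writing $J_{\omega,p}=\frac{p}{2(p+2)}I_\omega+\frac1{p+2}\big(N-(p+2)K\big)$ and using Lemma~\ref{esf-lev-2021}(v), namely $N\ge(p+2)K$, one gets the universal bound $J_{\omega,p}(\Psi)\ge\frac{p}{2(p+2)}I_\omega(\Psi)\ge0$ for every $\Psi$. On the other hand, since $P_\omega(\psi_{n,1},v_{n,1})+P_\omega(\psi_{n,2},v_{n,2})\to0$, at least one piece---say, after relabelling and passing to a subsequence, the first---satisfies $P_\omega(\psi_{n,1},v_{n,1})\to c_1\le0$. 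If $c_1<0$ the constraint $P_\omega\le0$ holds eventually and the characterization \eqref{swn} yields $J_{\omega,p}(\psi_{n,1},v_{n,1})\ge S(\omega)$; if $c_1=0$ I would rescale $(\psi_{n,1},v_{n,1})$ by a factor $t_n\to1$ to land on $\{P_\omega\le0\}$, which is possible because $N(\psi_{n,1},v_{n,1})\to\theta>0$ and $P_\omega(t\,\cdot)=t^2I_\omega-N(t\,\cdot)$ decreases through $0$ as $t$ grows (Lemma~\ref{esf-lev-2021}(iv)); continuity of $J_{\omega,p}$ under this scaling then gives $J_{\omega,p}(\psi_{n,1},v_{n,1})\ge S(\omega)-o(1)$.

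Combining the two bounds on this subsequence,
\[
S(\omega)=\lim_{n\to\infty}\big[J_{\omega,p}(\psi_{n,1},v_{n,1})+J_{\omega,p}(\psi_{n,2},v_{n,2})\big]\ge S(\omega)+\frac{p}{2(p+2)}\,(L-\theta),
\]
and since $L-\theta>0$ this forces $S(\omega)\ge S(\omega)+\delta$ with $\delta>0$, which is absurd because $S(\omega)$ is finite and strictly positive by Lemma~\ref{inf}(ii). Hence dichotomy cannot occur.

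I expect the main obstacle to be the borderline case $c_1=0$: making the rescaling rigorous requires checking that the factor $t_n$ exists, tends to $1$, and that the non-homogeneous functionals $N$ and $K$ vary continuously under it with uniform control, which is precisely where Lemma~\ref{esf-lev-2021}(iv),(vii) and the $H^2$-boundedness from Lemma~\ref{lem-r} enter. A secondary point to handle carefully is the identification of the limits of $I_\omega$ on the two pieces with the split masses $\theta$ and $L-\theta$, so that the ``leftover'' piece genuinely contributes the strictly positive amount $\frac{p}{2(p+2)}(L-\theta)$ that produces the contradiction.
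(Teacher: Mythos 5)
Your argument is correct and follows essentially the same route as the paper: the splitting lemma, the characterization \eqref{swn} applied to the piece whose $P_\omega$ has nonpositive limit (with the same rescaling $t_n\to 1^{+}$ in the borderline case $c_1=0$), and the coercivity bound $J_{\omega,p}\ge \frac{p}{2(p+2)}I_{\omega}$ on the remaining piece, producing the contradiction $S(\omega)\ge S(\omega)+\delta$ with $\delta>0$. The only small imprecision is that the dichotomy conditions give $\lim_{n\to\infty} I_{\omega}(\psi_{n,1},v_{n,1})\ge\theta$ and $\lim_{n\to\infty} I_{\omega}(\psi_{n,2},v_{n,2})\ge L-\theta$ rather than equalities, but strict positivity of both limits is all your contradiction requires.
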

\begin{proof}
First, we need to note that
\[
\lim_{n\to \infty} I_{\omega}(\psi_{n,1}, v_{n,1})=L_1\geq 0 \quad \text{ and }\quad \ \ \lim_{n\to \infty} I_{\omega}(\psi_{n,2}, v_{n,2})=L-L_1=L_2\geq 0.
\]
From condition (c) in Dichotomy, we have that $L_1, L_2>0$. In fact, 
\[
0< \theta= \lim_{n\to \infty} \int_{\R}d \nu_{n,1} \leq \lim_{n\to \infty} \int_{B_R(x_n)} \rho(\psi_{n,1}, v_{n,1})dx =L_1.  
\]
In particular, we have that
\[
\lim_{n\to \infty} N(\psi_{n,1}, v_{n,1})=L_1>0, \quad \text{and} \quad \lim_{n\to \infty} N(\psi_{n,2}, v_{n,2})=L_2>0.
\]
From the fact that the sequence $(\psi_{n,1}, v_{n,1})_n$ is bounded in $H^2(\R)$, we have that the sequence $\lambda_j=P_{\omega}(\psi_{n,j}, v_{n,j})_n$ for $j=1, 2$ is bounded. So,   using  a subsequence if necessary, we are allowed to define, $\lambda_{n,1}=P_{\omega}(\psi_{n,1}, v_{n,1})$ and  $\lambda_{n,2}=P_{\omega}(\psi_{n,2}, v_{n,2})$, and the limits
$$
\lambda_1=\lim\limits_{n \rightarrow \infty}\lambda_{n,1} \quad \text{and} \quad \lambda_2= \lim\limits_{n \rightarrow \infty}\lambda_{n,2}.
$$
We first consider the case $\lambda_1=0$ and $\lambda_2=0$. 
Suppose that for some subsequence of $\{(\psi_{n,1}, v_{n,1})\}$, still denoted by the same index, we had that $P_{\omega}(\psi_{n,1}, v_{n,1})>0$, then from Lemma \ref{esf-lev-2021}, we have that
\begin{align*}
P_{\omega}(\alpha(\psi_{n,1}, v_{n,1}))&\leq \alpha^2 N(\psi_{n,1}, v_{n,1})\left(\frac{I_{\omega}(\psi_{n,1}, v_{n,1})}{N(\psi_{n,1}, v_{n,1})}-\alpha^p \right)\\
&\leq  \alpha^2 N(\psi_{n,1}, v_{n,1})\left(1+o(1)-\alpha^p\right)<0,
\end{align*}
for $\alpha>1$. Thus, there is $(\alpha_n)_n$ with $\alpha_n \to 1^{+}$ such that $P_{\omega}(\alpha_n(\psi_{n,1}, v_{n,1}))=0$. Now, from the splitting result, we have that
\begin{align*}
J_{\omega}(\psi_{n}, v_{n})=&\frac12 I_{\omega}(\psi_{n,1}, v_{n,1}) - K(\psi_{n,1}, v_{n,1})+ J_{\omega}(\psi_{n,2}, v_{n,2})+o(1),\\
=& \frac{1}{\alpha_n^2}\left(\frac12 I_{\omega}(\alpha_n(\psi_{n,1}, v_{n,1})) - K(\alpha_n(\psi_{n,1}, v_{n,1}))\right)\\
&  +\left(\frac{1}{\alpha_n^2} K(\alpha_n(\psi_{n,1}, v_{n,1})) - K(\psi_{n,1}, v_{n,1})\right)+ J_{\omega}(\psi_{n,2}, v_{n,2})+o(1),\\
\geq& \frac{1}{\alpha_n^2}S(\omega)+\frac{1}{\alpha_n^2} \left(K(\alpha_n(\psi_{n,1}, v_{n,1})) - K(\psi_{n,1}, v_{n,1})\right)  +J_{\omega}(\psi_{n,2}, v_{n,2})+o(1).
\end{align*}
Using that $J_{\omega}(\psi_{n}, v_{n})=J_{\omega, p}(\psi_{n}, v_{n})$ and taking limit as $n\to \infty$, we conclude that
\[
0\geq \lim_{n\to \infty}J_{\omega}(\psi_{n,2}, v_{n,2})\geq \lim_{n\to \infty}\left(\frac12 I_{\omega}(\psi_{n,2}, v_{n,2})- \frac{1}{p+2}N(\psi_{n,2}, v_{n,2})\right)=\left(\frac{p}{2(p+2)}\right)L_2,
\]
which is a contradiction since $L_2>0$. So, we already have that  $P_{\omega}(\psi_{n,i}, v_{n,i})\leq 0$ for $i=1, 2$. On the other hand, Lemma \ref{inf} item ii) gives that  $J_{\omega,p}(\psi_{n,i}, v_{n,i})\geq S(\omega)$, for $i=1,2$, which ensures us a contradiction $S(\omega)\leq 0$ due to the splitting result.

Finally, we assume that $\lambda_1>0$ and $\lambda_2<0$. So, for $n\in \N$ large enough, we have $P_{\omega}(\psi_{n,1}, v_{n,1})>0$ and  $P_{\omega}(\psi_{n,2}, v_{n,2})<0$. Then, we see that
\begin{align*}
J_{\omega}(\psi_{n}, v_{n})&= J_{\omega,p}(\psi_{n,1}, v_{n,1}) + J_{\omega,p}(\psi_{n,2}, v_{n,2})+o(1),\\
&\geq J_{\omega,p}(\psi_{n,1}, v_{n,1}) + S(\omega)+o(1),
\end{align*}
which implies that
\begin{align*}
0\geq \lim_{n\to \infty}J_{\omega, p}(\psi_{n,1}, v_{n,1})&= \lim_{n\to \infty}\left( \frac12 I_{\omega}(\psi_{n,1}, v_{n,1})-K(\psi_{n,1}, v_{n,1})- \frac{1}{p+2}P(\psi_{n,1}, v_{n,1})\right)\\
&= \lim_{n\to \infty}\left( \frac{p}{2(p+2)}I_{\omega}(\psi_{n,1}, v_{n,1})-K(\psi_{n,1}, v_{n,1})+\frac{1}{p+2}N(\psi_{n,1}, v_{n,1})\right)\\
&\geq \lim_{n\to \infty}\frac{p}{2(p+2)}I_{\omega}(\psi_{n,1}, v_{n,1})=\left(\frac{p}{2(p+2)}\right)L_1,
\end{align*}
which is a contradiction with $L_1>0$. In other words, we have ruled out Dichotomy, and Lemma \ref{n-d} is proven.
\end{proof}
Now, we use the compactness property to determine the existence of non-trivial traveling waves for the generalized Boussinesq system in the cases $b=d$ and $b_2=d_2$, that is, we are in a position to prove the main result of the work.

\begin{proof}[\bf Proof of Theorem \ref{exist}] From Lion's concentration-compactness principle, after having ruled out dichotomy and vanishing, we have compactness property which gives the existence of a sequence $\left\{y_{n}\right\}_n\subset \mathbb{R}$ such that, for all  $\epsilon >0$ there exists  $R(\epsilon)>0$ satisfying
\begin{equation*}
\int_{\left|x-y_{n}\right|<R(\epsilon)}\rho(\psi_{n}, v_n)\,dx \geq \int_{\mathbb{R}}\rho(\psi_{n}, v_n)\,dx - \epsilon.
\end{equation*}
The previous inequality is equivalent to the following one
\begin{equation*}
\int_{\left|x\right|<R(\epsilon)}\rho(\tilde \psi_{n}, \tilde v_n)\,dx \geq \int_{\mathbb{R}}\rho(\tilde \psi_{n}, \tilde v_n)\,dx - \epsilon,
\end{equation*}
when translated to origin, where  $\tilde \eta$ means $\tilde \eta(x)=\eta(x+y_{n})$. In other words, we have that
\begin{equation*}
\int_{\left|x\right|\geq R(\epsilon)}\rho(\tilde \psi_{n}, \tilde v_n)\,dx \leq \epsilon.
\end{equation*}
From the definition of $\rho$ in (\ref{RHO}) and inequality (\ref{IG2}), we get that
\begin{equation*}
M_1\int_{\left|x\right|\geq R(\epsilon)}\left(\left(\partial^{j}_{x}\tilde \psi_n\right)^{2}+\left(\partial^{j}_{x}\tilde v_n\right)^{2}\right)\,dx\leq\int_{\left|x\right|\geq R(\epsilon)}\rho(\tilde \psi_{n}, \tilde v_n)\,dx \leq \epsilon.
\end{equation*}
From this fact, we conclude that
\begin{align}\label{RRRR}
\int_{\mathbb{R}}\left(\partial^{i}_{x}\tilde \psi_n\right)^{2}\,dx & \leq \int_{\left|x\right|\leq R(\epsilon)}\left(\partial^{i}_{x} \tilde \psi_{n}\right)^{2}\,dx+ \frac{\epsilon}{M_1},
\end{align}
and
\begin{align}\label{RRRR1}
\int_{\mathbb{R}}\left(\partial^{i}_{x}\tilde v_n\right)^{2}\,dx & \leq \int_{\left|x\right|\leq R(\epsilon)}\left(\partial^{i}_{x} \tilde v_{n}\right)^{2}\,dx+ \frac{\epsilon}{M_1}.
\end{align}
Now, we will establish that
\[
\partial^{i}_{x}\tilde \psi_{n}\rightarrow \partial^{i}_{x}\psi_{0}, \ \ \partial^{i}_{x}\tilde v_{n}\rightarrow \partial^{i}_{x}v_{0}\ \ \mbox{in $L^{2}\left(\mathbb{R}\right)$, \quad \text {for $i=0, 1, 2$}. }
\]
Observe that for any bounded  open interval $I$, and any $ r > 1$, we have that the embedding $W^{1, r}(I) \hookrightarrow C^{\alpha}(\bar I)$ is compact for $\alpha\in \left[0, 1-\frac{n}{r}\right) $, and so the embedding $W^{k, r}(I) \hookrightarrow L^q(I)$, for $q\geq 1 $, is valid, since $C^{\alpha}(I)\hookrightarrow  L^q(I)$  for $q\geq 1$ and for $k\geq 1$ the operators $W^{k, r}(I) \hookrightarrow W^{1, r}(I)$  are bounded. In particular, we have for $0\leq i\leq 2$ that
\begin{align}\label{r-r}
\partial_x^i \tilde \psi_n \rightharpoonup \partial_x^i\psi_0 &, \ \ \mbox{in} \ \ H^{2-i}(\R),
\end{align}
and
\begin{align}\label{r-r1}
\partial_x^i \tilde v_n \rightharpoonup \partial_x^i v_0 &, \ \ \mbox{in} \ \ H_{loc}^{1-i}(\R).
\end{align}
From the convergence \eqref{r-r}, using Fatou's Theorem and thanks the inequality \eqref{RRRR}, we ensure that
\begin{align*}
\left\|\partial^{i}_{x}\psi_{0}\right\|^{2}_{L^{2}(\mathbb{R})}  &\leq \liminf\limits_{n\to \infty}\left\|\partial^{i}_{x}\tilde \psi_{n}\right\|^{2}_{L^{2}(\mathbb{R})}\\
&\leq \liminf\limits_{n\to \infty}\left(  \int_{\left|x\right|\leq R(\epsilon)} \left(\partial^{i}_{x}\tilde  \psi_{n}\right)^{2}\,dx\right)+ \frac{\epsilon}{M_1}\\
&\leq \int_{\left|x\right|\leq R(\epsilon)} \left(\partial^{i}_{x}\psi_{0}\right)^{2}\,dx+ \frac{\epsilon}{M_1}\\
&\leq \left\|\partial^{i}_{x}\psi_{0}\right\|^{2}_{L^{2}(\mathbb{R})}+ \frac{\epsilon}{M_1},
\end{align*}
 for $0\leq i\leq 2$. Thus, 
$$  \liminf\limits_{n\to \infty}\left\|\partial^{i}_{x}\tilde \psi_{n}\right\|^{2}_{L^{2}(\mathbb{R})}=\left\|\partial^{i}_{x}\psi_{0}\right\|^{2}_{L^{2}(\mathbb{R})}.$$
  Analogously, using the convergence \eqref{r-r1}, once again, Fatou's Theorem and now the inequality \eqref{RRRR1}, we get
 \begin{align*}
\left\|\partial^{i}_{x}v_{0}\right\|^{2}_{L^{2}(\mathbb{R})} \leq  \liminf\limits_{n\to \infty}\left\|\partial^{i}_{x}\tilde v_{n}\right\|^{2}_{L^{2}(\mathbb{R})}\leq\left\|\partial^{i}_{x}v_{0}\right\|^{2}_{L^{2}(\mathbb{R})}+ \frac{\epsilon}{M_1},
\end{align*}
that is, 
$$  \liminf\limits_{n\to \infty}\left\|\partial^{i}_{x}\tilde v_{n}\right\|^{2}_{L^{2}(\mathbb{R})}=\left\|\partial^{i}_{x}v_{0}\right\|^{2}_{L^{2}(\mathbb{R})}.$$
Thanks to both inequalities, we have shown  that
$$
\left\|\partial^{i}_{x}\tilde \psi_{n}\right\|_{L^{2}(\mathbb{R})} \rightarrow \left\|\partial^{i}_{x}\psi_{0}\right\|_{L^{2}(\mathbb{R})},$$
and
$$ \left\|\partial^{i}_{x}\tilde v_{n}\right\|_{L^{2}(\mathbb{R})} \rightarrow \left\|\partial^{i}_{x} v_{0}\right\|_{L^{2}(\mathbb{R})},
$$
for $0\leq i\leq 2$. 

On the other hand, we also have that $\partial^{i}_{x}\tilde \psi_{n}\rightharpoonup \partial^{i}_{x}\tilde \psi_{0}$ and $\partial^{i}_{x}\tilde v_{n}\rightharpoonup \partial^{i}_{x}v_{0}$ weakly in $L^{2}(\mathbb{R})$ for $0\leq i\leq 2$. Thus, using these facts, we conclude that
$$
\partial^{i}_{x} \tilde \psi_{n} \rightarrow \partial^{i}_{x}\psi_{0}\quad \partial^{i}_{x} \tilde v_{n} \rightarrow \partial^{i}_{x}v_{0}\quad \text{in} \quad  L^{2}(\mathbb{R}),
$$
for $0\leq i\leq 2$. Moreover, we also have that $0=P_{\omega}(\psi_{n}, v_n)=P_{\omega}(\tilde \psi_{n}, \tilde v_n)\rightarrow P_{\omega}(\psi_{0}, v_0)=0$, since we have strong convergence and  that $P_{\omega}$ is Lipschitz (continuous). We also  have that  $S(\omega) \leq J_{\omega}(\psi_0, v_0)$. Thus, the weak convergence of $(\tilde \psi_n, \tilde v_n)$ to $(\psi_0, v_0)$ in $X$ and the weak lower semi-continuity of functional $J_{\omega}$, yields that
\[
 J_{\omega}(\psi_0, v_0) \leq \liminf J_{\omega}(\tilde \psi_n, \tilde v_n) =\liminf J_{\omega}(\psi_n, v_n) = S(\omega),
\]
meaning that $J_{\omega}(\psi_0, v_0)=S(\omega)$. In other words, $(\psi_0, v_0)$ is in fact a minimizer for $S(\omega)$ and $(\tilde \psi_{n}, \tilde v_n)\rightarrow (\psi_{0}, v_0)$ in $X$, as claimed.
\end{proof}
Now, we define the set of ground states as 
\begin{equation}\label{ground}
\mathcal G_{\omega}=\left\{(\psi, v)\in X\setminus\{0\}: J_{\omega}(\psi, v)=S(\omega),  \  \mbox{and} \ P_{\omega}(\psi, v)=0\right\}.
\end{equation}

\begin{lem}\label{estd}
Let $0<|\omega|<\min\left\{1,\frac{{-a}}{b}, \frac{{-c}}{b},\frac{a_2}{b_2},\frac{c_2}{b_2}\right\}$ and $\mathcal G_{\omega}$ defined by \eqref{ground}.

\nd i) Then $S({\omega})$ is uniformly bounded.

\nd ii) If $0<{\omega}_1 <\omega_2<1$ and $(\psi,v)\in\mathcal{G}_{\omega}$, we find that $I_{2,{\omega}}(\psi,v)$ is uniformly bounded on $[{\omega}_1,{\omega}_2]$.

\nd iii) For $(u, v)\in \mathcal G_{\omega}$, we have $S'(\omega)=-I_{2}(u, v)$. Here, $I_2$ is defined by \eqref{I2a}.
\end{lem}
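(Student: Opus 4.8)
The three parts all rest on the homogeneous identity $J_\omega = \frac{p}{2(p+2)}I_\omega$ on $\M_\omega$ together with the norm-equivalence \eqref{IG1}. Throughout I fix a compact subinterval $[\omega_1,\omega_2]$ of the admissible range and record that the constants $M_1,M_2$ of Lemma \ref{inf} depend continuously on $\omega$ and stay strictly positive there, hence are uniformly bounded above and below. For part i) I would argue by a two-sided estimate. For the upper bound, fix one test pair $\Psi_0=(u,v)$ supplied by hypothesis (c), so that $N(\Psi_0)>0$, and scale it onto $\M_\omega$ using the homogeneity of Lemma \ref{esf-lev-2021}(iv): there is a unique $t(\omega)>0$ with $t^p=I_\omega(\Psi_0)/N(\Psi_0)$, and $J_\omega(t\Psi_0)=\frac{p}{2(p+2)}I_\omega(\Psi_0)^{1+2/p}N(\Psi_0)^{-2/p}$; since $\omega\mapsto I_\omega(\Psi_0)$ is affine and bounded on $[\omega_1,\omega_2]$, this bounds $S(\omega)$ above uniformly. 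For the positive lower bound, on $\M_\omega$ one has $M_1\|w\|_X^2\le I_\omega(w)=N(w)\le C(\|w\|_X^{q_1+2}+\|w\|_X^{q_2+2})$ by \eqref{IG1} and Lemma \ref{esf-lev-2021}(vii); as $q_1,q_2>0$ this forces $\|w\|_X\ge\delta_0>0$ uniformly, whence $S(\omega)=\frac{p}{2(p+2)}I_\omega(w)\ge\frac{p}{2(p+2)}M_1\delta_0^2$ stays away from $0$.

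Part ii) is then immediate. On a ground state $I_\omega(\psi,v)=\frac{2(p+2)}{p}S(\omega)$ is uniformly bounded by part i), so \eqref{IG1} gives a uniform bound on $\|(\psi,v)\|_X$. Since $I_1$ is a nonnegative quadratic form controlled by $\|(\psi,v)\|_X^2$ (its coefficients $1,-c,c_2,-a,a_2$ being positive and $\omega$-independent), the identity $I_{2,\omega}=I_\omega-I_1$ exhibits $I_{2,\omega}(\psi,v)$ as a difference of two uniformly bounded quantities, hence $I_{2,\omega}(\psi,v)$ is uniformly bounded on $[\omega_1,\omega_2]$.

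Part iii) I would prove by an envelope argument. The elementary observation is that for a \emph{fixed} pair $(\psi,v)$ the map $\omega\mapsto J_\omega(\psi,v)$ is smooth with $\partial_\omega J_\omega(\psi,v)=-I_2(\psi,v)$, since only $I_{2,\omega}=-2\omega I_2$ depends on $\omega$. Given $(u,v)\in\mathcal{G}_\omega$, I scale it onto $\M_{\tilde\omega}$ for $\tilde\omega$ near $\omega$, producing the smooth comparison function $\Phi(\tilde\omega)=\frac{p}{2(p+2)}I_{\tilde\omega}(u,v)^{1+2/p}N(u,v)^{-2/p}$ which satisfies $S(\tilde\omega)\le\Phi(\tilde\omega)$, $\Phi(\omega)=S(\omega)$, and (using $I_\omega(u,v)=N(u,v)$ on $\M_\omega$) $\Phi'(\omega)=-I_2(u,v)$. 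Reading this tangency from the right and from the left yields the one-sided Dini bounds $\overline{D}^+S(\omega)\le -I_2(u,v)\le\underline{D}^-S(\omega)$, valid for every ground state. Running the construction with ground states $d_{\tilde\omega}\in\mathcal{G}_{\tilde\omega}$ as competitors for $S(\omega)$ produces the reverse inequalities, with the Taylor remainders made uniform precisely by the bounds of parts i)--ii).

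To close the sandwich I must establish continuity of $\tilde\omega\mapsto I_2(d_{\tilde\omega})$ along ground states: for $\omega_n\to\omega$, the sequence $(d_{\omega_n})$ is bounded in $X$ by i)--ii), is (after the appropriate rescaling) a minimizing sequence for $S(\omega)$, and by the concentration--compactness analysis behind Theorem \ref{exist} has a subsequence converging strongly in $X$ to an element of $\mathcal{G}_\omega$; strong convergence gives $I_2(d_{\omega_n})\to I_2$ of the limit and $S(\omega_n)\to S(\omega)$. With this continuity in hand, all four Dini derivatives pinch to the common value $-I_2(u,v)$, so $S$ is differentiable at $\omega$ with $S'(\omega)=-I_2(u,v)$; and since the tangency bound holds for \emph{every} $(u,v)\in\mathcal{G}_\omega$, the value of $I_2$ is the same on all ground states. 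The main obstacle is exactly this continuity step --- transferring the compactness of a single minimizing sequence to the $\omega$-family of ground states --- which relies essentially on the uniform bounds of parts i) and ii) for both the $X$-bound on $(d_{\omega_n})$ and the uniformity of the comparison-function remainders.
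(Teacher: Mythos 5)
Your overall strategy matches the paper's: part i) by exhibiting an explicit competitor satisfying the constraint, part ii) from $S(\omega)\ge \tfrac{p}{2(p+2)}I_{\omega}$ on ground states together with \eqref{IG1}, and part iii) by rescaling a ground state onto the nearby constraint set and extracting one-sided derivative bounds. There is, however, one genuine gap relative to the generality in which the lemma is stated. You announce that all three parts ``rest on the homogeneous identity $J_{\omega}=\tfrac{p}{2(p+2)}I_{\omega}$ on $\M_{\omega}$'', and several key steps --- the exact scaling $t^{p}=I_{\omega}(\Psi_{0})/N(\Psi_{0})$ placing $t\Psi_{0}$ \emph{precisely} on $\M_{\omega}$, the closed formula for $\Phi(\tilde\omega)$, and the computation $\Phi'(\omega)=-I_{2}(u,v)$ --- use exact $(p+2)$-homogeneity of $N$ and $K$. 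Under the standing hypotheses (a)--(c) the nonlinearity need not be homogeneous (the remark following Lemma \ref{inf} asserts that identity only in the homogeneous case), and Lemma \ref{esf-lev-2021} supplies only the one-sided inequalities $N(\alpha w)\ge \alpha^{p+2}N(w)$ for $\alpha\ge 1$ and the reverse for $0<\alpha\le 1$. The paper's proof is organized precisely to avoid this: for i) it takes a competitor of the form $(\psi,0)$, for which $I_{2}(\psi,0)=0$ makes the constraint $P_{\omega}(\psi,0)=I_{1}(\psi,0)-N(\psi,0)=0$ independent of $\omega$; for iii) it splits into the case $I_{\omega_{1}}(u^{\omega_{2}},v^{\omega_{2}})\le I_{\omega_{2}}(u^{\omega_{2}},v^{\omega_{2}})$, where $P_{\omega_{1}}\le 0$ already holds, and the opposite case, where the rescaling by $\alpha_{2}=(I_{\omega_{1}}/I_{\omega_{2}})^{1/p}>1$ only yields $P_{\omega_{1}}(\alpha_{2}\,\cdot)\le 0$; in both cases it then invokes the characterization \eqref{swn} through $J_{\omega,p}$ rather than exact membership in $\M_{\omega_{1}}$. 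Your argument is correct as written for homogeneous $F$, but to cover the stated generality you must systematically replace your equalities by these inequalities and route the comparison through $J_{\omega,p}$ and \eqref{swn}.

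On the other side of the ledger, your part iii) is more careful than the paper's at the final step: the paper derives only $(S')^{-}(\omega_{2})\ge -I_{2}(u^{\omega_{2}},v^{\omega_{2}})$ and $(S')^{+}(\omega_{1})\le -I_{2}(u^{\omega_{1}},v^{\omega_{1}})$ and then asserts differentiability, whereas you correctly identify that closing the sandwich requires the reverse Dini bounds, obtained by using ground states at nearby speeds as competitors together with continuity of $\tilde\omega\mapsto I_{2}(d_{\tilde\omega})$ via the concentration--compactness machinery. Your positive lower bound for $S(\omega)$ in part i) is extra (it is already contained in Lemma \ref{inf} ii)) but harmless.
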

\begin{proof} Consider $I_1$ and $I_2$ defined by \eqref{I1a} and \eqref{I2a}, respectively.

i) Note that $I_{\omega}(\psi, v)=I_1(\psi, 0)+I_1(0,v)-2\omega I_2(\psi, v)$. So, we choose $\psi\in H^2(\R)$ such that $I_1(\psi, 0)-N(\psi, 0)=0$. Thus, $P_{\omega}(\psi,0)=I_1(\psi, 0)- N(\psi,0)=0$. From this and Lemma \ref{esf-lev-2021} (vii), we get that
\[
S({\omega})\leq J_{\omega}(\psi, 0)\leq  I_1(\psi, 0)-K(\psi, 0)\leq  ||\psi||_{H^2}^2+C\left(||\psi||_{H^2}^{q_1+2}+||\psi||_{H^2}^{q_2+2}\right),
\]
which implies  that $S({\omega})\leq C$, where $C>0$ is independent of $0<\omega<1$.

\nd ii) For $0<{\omega}_1<\omega <{\omega}_2<1$ and $(\psi,v)\in\mathcal{G}_{\omega}$, we have that
\[
C\geq S({\omega})=J_{\omega}(\psi,v)\geq \frac12 I_{\omega}(\psi,v)- \frac1{p+2} N(\psi,v)\geq \left(\frac{p}{2(p+2)}\right) I_{\omega}(\psi,v),
\]
which implies that
\[
I_{2, \omega}(\psi,v)\leq C|\omega|||(\psi, v)||_X^2\leq M_2(a, b, c, b_2, c_2\omega_1)|\omega_2| I_{\omega}(\psi,v),
\]
where $M_2(\omega_1, a, a_2, b, b_2, c, c_2)=\frac{1}{M_1(\omega_1, a, a_2, b, b_2, c, c_2)}$ is defined in Lemma \ref{inf}.

\nd iii) Let ${\omega}_1<{\omega}_2$ and $(u^{{\omega}_i},v^{{\omega}_i})\in\mathcal{G}_{{\omega}_i}$. In the case $I_{\omega_1}(u^{\omega_2},v^{\omega_2})\leq I_{\omega_2}(u^{\omega_2},v^{\omega_2})$, and we have that  $P_{\omega_1}(u^{\omega_2},v^{\omega_2})\leq P_{\omega_2}(u^{\omega_2},v^{\omega_2})=0$. This implies that
\begin{equation}\label{Sprima-1}
\begin{split}
S(\omega_1)&\leq J_{\omega_1,p}(u^{\omega_2},v^{\omega_2}), \\
& \leq J_{\omega_2,p}(u^{\omega_2},v^{\omega_2})+ (\omega_2-\omega_1) I_{2}(u^{\omega_2},v^{\omega_2}), \\
& \leq S(\omega_2)+ (\omega_2-\omega_1) I_{2}(u^{\omega_2},v^{\omega_2}).
\end{split}
\end{equation}
Now, we consider the case $I_{\omega_1}(u^{\omega_2},v^{\omega_2})> I_{\omega_2}(u^{\omega_2},v^{\omega_2})$. So, for  $\alpha_2=\left(\frac{I_{\omega_1}(u^{\omega_2},v^{\omega_2})}{I_{\omega_2}(u^{\omega_2},v^{\omega_2})}
\right)^{\frac1p}>1$, we have
\begin{align*}
P_{{\omega}_1}(\alpha_2(u^{\omega_2},v^{\omega_2}))&= \alpha^2_2 I_{\omega_1}(u^{\omega_2},v^{\omega_2})- N(\alpha(u^{\omega_2},v^{\omega_2})),\\
&\leq \alpha^2_2 I_{\omega_1}(u^{\omega_2},v^{\omega_2})- \alpha^{p+2}_2N(u^{\omega_2},v^{\omega_2}),\\
&\leq \alpha^2_2\left(I_{\omega_1}(u^{\omega_2},v^{\omega_2})-  \alpha^{p}_2I_{\omega_2}(u^{\omega_2},v^{\omega_2})\right)\\
&\leq 0.
\end{align*}
From this fact, we conclude that
\begin{align*}
S(\omega_1)&\leq J_{{\omega}_1, p}(\alpha_2(u^{\omega_2},v^{\omega_2}))\\
&\leq \alpha_2^2 S(\omega_2)+\alpha_2^2(\omega_2-\omega_1)I_{2}(u^{\omega_2},v^{\omega_2})+\alpha_2^2 K(u^{\omega_2},v^{\omega_2})-K(\alpha_2(u^{\omega_2},v^{\omega_2}))\\& -\frac1{p+2}P_{\omega}(\alpha_2(u^{\omega_2},v^{\omega_2})).
\end{align*}
Now, pick $g(\alpha)=\alpha^2 K(u^{\omega_2},v^{\omega_2})-K(\alpha(u^{\omega_2},v^{\omega_2}))$. Then, we see that $g(1)=0$ and $g'(1)=-2S(\omega_2)$. So, for $\omega_1-\omega_2 \sim 0$, we ensure that 
\[
g(\alpha_2)=\alpha_2^2 K(u^{\omega_2},v^{\omega_2})-K(\alpha_2(u^{\omega_2},v^{\omega_2}))= -2S(\omega_2)(\alpha_2-1)+ O((\alpha_2-1)^2),
\]
where we are using that
\[
\alpha_2-1=\frac{2 (\omega_2-\omega_1)}{p} \frac{I_{2}(u^{\omega_2},v^{\omega_2})}{I_{\omega_2}(u^{\omega_2},v^{\omega_2})}+O((\omega_2-\omega_1)^2).
\]
On the other hand, we see for 
$\omega_1-\omega_2 \sim 0$ that $P_{\omega}(\alpha_2(u^{\omega_2},v^{\omega_2}))=O(|\alpha_2-1|)$ since
\[
\lim_{\alpha_2\to 1}P_{\omega}(\alpha_2(u^{\omega_2},v^{\omega_2}))=P_{\omega}((u^{\omega_2},v^{\omega_2}))=0. 
\]
From these facts, we have
\begin{equation}\label{Sprima-2}
\begin{split}
S(\omega_1) \leq& S(\omega_2) +(\alpha_2^2-1) S(\omega_2)+\alpha_2^2(\omega_2-\omega_1)I_{2}(u^{\omega_2},v^{\omega_2})\\&-2S(\omega_2)(\alpha_2-1)+ O(|\alpha_2-1|),\\
\leq &S(\omega_2) +(\alpha_2^2-1) S(\omega_2)\left(\frac{\alpha_2-1}{\alpha_2+1} \right)\\&+\alpha_2^2(\omega_2-\omega_1)I_{2}(u^{\omega_2},v^{\omega_2})+ O(|\alpha_2-1|).
\end{split}
\end{equation}
Finally, from (\ref{Sprima-1}) and (\ref{Sprima-2}), we conclude that
\[
(S')^{-}(w_2)=\lim_{\omega_1\to \omega_2^{-}}\frac{S(\omega_1)-S(\omega_2) }{\omega_1-\omega_2}\geq -I_{2}(u^{\omega_2},v^{\omega_2}).
\]
In a similar way, interchanging the role $\omega_1$ and $\omega_2$ in an appropriated way,  we see that
\[
(S')^{+}(w_1)=\lim_{\omega_2\to \omega_1^{+}}\frac{S(\omega_2)-S(\omega_1) }{\omega_2-\omega_1}\leq -I_{2}(u^{\omega_1},v^{\omega_1}).
\]
Putting previous inequalities together, we conclude for any $(\psi, v)\in \mathcal G_{\omega}$ that
\[
S'(\omega)=-I_{2}(\psi, v),
\]
giving the result, and the Lemma \ref{estd} is achieved.
\end{proof}

\section{Numerical experiments} \label{sec3}

In this section, we introduce numerical solvers to approximate the solutions of the solitary wave equations \eqref{trav-eqs} and compute traveling wave solutions for a given wave velocity $\omega$, based on the parameter regime outlined in the previous section. Additionally, some of our experiments investigate scenarios where the wave velocity lies outside the previously established theoretical range.

\subsection{Homogeneous case}

When the nonlinear functions $H_1$ and $H_2$ in the Boussinesq system \eqref{1bbl} are homogeneous, we adapt the numerical solver from \cite{Esfahani-Levandosky-2021}, originally developed for the fifth-order scalar KdV equation, to approximate the solutions of the traveling-wave equations \eqref{trav-eqs}. This method uses a Fourier basis and incorporates stabilizing factors for wave elevation $\eta$ and fluid velocity $u$, ensuring the convergence of the iterative scheme employed in the numerical experiments presented here.

Taking Fourier transform of equations \eqref{trav-eqs}, we obtain
\begin{align*}
\begin{cases}
-\omega ( \hat{v}_k + d k^2 \hat{v}_k + d_2 k^4 \hat{v}_k ) + \hat{\psi}_k - c k^2 \hat{\psi}_k + c_2 k^4 \hat{\psi}_k &=\widehat{H_1}, \\
-\omega( \hat{\psi}_k + b k^2 \hat{\psi}_k + b_2 k^4 \hat{\psi}_k ) + \hat{v}_k - a k^2 \hat{v}_k + a_2 k^4 \hat{v}_k & = \widehat{H_2}.
\end{cases}
\end{align*}
We recall that $b=d>0$, $b_2 = d_2 >0$, $a, c <0$, $a_2, c_2 >0$, and $\widehat{H_1}, \widehat{H_2}$ are the Fourier transforms of the functions $H_1(\psi, \psi', \psi'', v, v', v'')$ and $H_2(\psi, \psi', \psi'', v, v', v'')$, respectively.

Rewriting these equations, we get
\begin{align*}
\begin{cases}
-\omega (1 + d k^2 + d_2 k^4) \hat{v}_k + (1-ck^2 + c_2 k^4) \hat{\psi}_k &= \widehat{H_1}, \\
(1-ak^2 + a_2 k^4)\hat{v}_k - \omega ( 1 + b k^2 + b_2 k^4) \hat{\psi}_k &= \widehat{H_2},
\end{cases}
\end{align*}
and thus, we can write
\begin{align}
&\hat{v}_k = \frac{ \widehat{H_1} D_{22} - \widehat{H_2}  D_{12} }{D_{11} D_{22} - D_{21} D_{12} }, \label{v_eq}
\end{align}
and
\begin{align}
&\hat{\psi}_k = \frac{ \widehat{H_2} D_{11} - \widehat{H_1} D_{21} }{ D_{11} D_{22} - D_{21} D_{12} }, \label{psi_eq}
\end{align}
where
\begin{equation*}
D = \begin{pmatrix}
D_{11} & D_{12} \\
D_{21} & D_{22} 
\end{pmatrix} =
\begin{pmatrix}
-\omega ( 1 + d k^2 + d_2 k^4 ) & 1 - c k^2 + c_2 k^4 \\
1 - a k^2 + a_2 k^4 & -\omega (1 + b k^2 + b_2 k^4 )
\end{pmatrix}.
\end{equation*}

On the other hand, by multiplying equation \eqref{v_eq} by $\hat{v}_k$ and equation \eqref{psi_eq} by $\hat{\psi}_k$, and then adding over $k$, we obtain
\begin{align*}
&P_1(v ) := \frac{ \sum_k  (D_{11} D_{22} - D_{21} D_{12} ) \hat{v}_k^2  }{ \sum_k (\widehat{H}_1 D_{22} - \widehat{H_2} D_{12}  ) \hat{v}_k } = 1,
\end{align*}
and
\begin{align*}
&P_2(\psi) := \frac{  \sum_k ( D_{11} D_{22} - D_{21} D_{12}  ) \hat{\psi}_k^2  }{  \sum_k ( \widehat{H_2} D_{11} - \widehat{H_1} D_{21} ) \hat{\psi}_k } = 1.
\end{align*}
To compute approximate solutions to the traveling wave equations, we select the initial values $\psi^0$ and $v^0$. Then, for $s \geq 0$, we define the sequence
\begin{align}
&\hat{v}_{k}^{s+1} = \frac{\widehat{H_1}(\tilde{\psi^s}, \tilde{\psi^s}', \tilde{\psi^s}'', \tilde{v^s}, \tilde{v^s}', \tilde{v^s}'') D_{22} - \widehat{H_2}(\tilde{\psi^s}, \tilde{\psi^s}', \tilde{\psi^s}'', \tilde{v^s}, \tilde{v^s}', \tilde{v^s}'')  D_{12}  }{ D_{11} D_{22} - D_{21} D_{12} }, \label{iteration_1} 
\end{align}
and
\begin{align}
&\hat{\psi}_k^{s+1} =  \frac{\widehat{H_2}(\tilde{\psi^s}, \tilde{\psi^s}', \tilde{\psi^s}'', \tilde{v^s}, \tilde{v^s}', \tilde{v^s}'') D_{11} - \widehat{H_1}(\tilde{\psi^s}, \tilde{\psi^s}', \tilde{\psi^s}'', \tilde{v^s}, \tilde{v^s}', \tilde{v^s}'')  D_{21}  }{ D_{11} D_{22} - D_{21} D_{12} }, \label{iteration_2}
\end{align}
where $\tilde{v}^s : = \alpha_s v^s$, $\tilde{\psi}^s := \beta_s \psi^s$, and $\alpha_s$, $\beta_s \in \mathbb{R}$ are solutions of the following equations
\begin{equation}\label{Pequations}
\begin{cases}
P_1(\alpha_s v^s ) = 1,\\
P_2(\beta_s \psi^s ) = 1.
\end{cases}
\end{equation}
We note that the quantities $\alpha_s$ and $\beta_s$ are stabilizing factors introduced to ensure the convergence of the iteration defined in \eqref{iteration_1} and \eqref{iteration_2}.

Suppose that the functions $H_1, H_2$ are both homogeneous of order $p+1$. Then, given that the parameters $\alpha_s$, $\beta_s$ are chosen at each iteration $s$ such that $P_1(\alpha_s v^s) = 1$ and 
$P_2(\beta_s \psi^s) = 1$, we obtain that
\begin{equation}\label{solit_curve}
\begin{cases}
&\frac{ \sum_k  (D_{11} D_{22} - D_{21} D_{12} ) \alpha_s^2 \hat{v}_k^2  }{ \sum_k \alpha_s^{p+1} (\widehat{H_1} D_{22} - \widehat{H_2} D_{12}  ) \alpha_s \hat{v}_k } = 1,\\
\\
&\frac{  \sum_k ( D_{11} D_{22} - D_{21} D_{12}  ) \beta_s^2 \hat{\psi}_k^2  }{  \sum_k \beta_s^{p+1} ( \widehat{H_2} D_{11} - \widehat{H_1} D_{21} ) \beta_s \hat{\psi}_k } = 1.
\end{cases}
\end{equation}
From equations \eqref{Pequations} and \eqref{solit_curve}, we obtain the explicit expressions for the parameters $\alpha_s, \beta_s$ given by 
\begin{equation*}
\begin{cases}
\alpha_s &= M_s^{1/p},\\
\beta_s &= N_s^{1/p},
\end{cases}
\end{equation*}
where
\begin{align*}
&M_s := \frac{ \sum_k  (D_{11} D_{22} - D_{21} D_{12} ) \hat{v}_k^2  }{ \sum_k  (\widehat{H}_1 D_{22} - \widehat{H_2} D_{12}  ) \hat{v}_k },
\end{align*}
and
\begin{align*}
&N_s := \frac{  \sum_k ( D_{11} D_{22} - D_{21} D_{12}  ) \hat{\psi}_k^2  }{  \sum_k ( \widehat{H_2} D_{11} - \widehat{H_1} D_{21} ) \hat{\psi}_k }.
\end{align*}

As a consequence, the iteration defined by equations \eqref{iteration_1} and \eqref{iteration_2}, can be rewritten as follows:
\begin{align}
&\hat{v}_{k}^{s+1} = M_s^{ \frac{p+1}{p} } \frac{\widehat{H_1}( {\psi^s}, {\psi^{s}}', {\psi^{s}}'', {v^s}, {v^{s}}', {v^{s}}'') D_{22} - \widehat{H_2}(\psi^s, {\psi^{s}}', {\psi^{s}}'', v^s, {v^{s}}', {v^{s}}'' )  D_{12}  }{ D_{11} D_{22} - D_{21} D_{12} }, \label{iteration_3}
\end{align}
and
\begin{align}
&\hat{\psi}_k^{s+1} = N_s^{\frac{p+1}{p} } \frac{\widehat{H_2}({\psi^s, \psi^s}', {\psi^s}'', {v^s}, {v^s}', {v^s}'') D_{11} - \widehat{H_1}( {\psi^s}, {\psi^s}',
{\psi^s}'', {v^s}, {v^s}', {v^s}'')  D_{21}  }{ D_{11} D_{22} - D_{21} D_{12} }. \label{iteration_4}
\end{align}
Next, we compute some approximate solitary wave solutions to the Boussinesq system \eqref{1bbl} for different model parameters using the numerical scheme introduced, with $H_1 = u^{p+1}$ and $H_2 = \eta^{p+1}$.

In Figure \ref{Solit_u_eta1}, we display the solitary wave solution $(u, \eta)$ computed using the iterative scheme \eqref{iteration_3}-\eqref{iteration_4}, with initial values given by
\begin{equation}
v^0(x) = \psi^0(x) = e^{-0.5 (x-a_0)^2}, \label{starting_values}
\end{equation}
with $a_0 = 100$.  We have used $N=2^{12}$ spectral points in the spatial domain, with the computational domain defined as the interval $[0,L]=[0,200]$.
We point out that the non-monotonicity for $x \geq 0$ of the solitary waves shown in Figure \ref{Solit_u_eta1} arises from an imbalance between the nonlinear and dispersive terms in the Boussinesq system \eqref{1bbl}. Similar behavior has already been observed in other nonlinear dispersive models, such as the modified Kawahara equation studied in \cite{Marinov}.

In Figure \ref{Solit_u_eta2}, we present the results obtained for different model parameters, using the same initial values as given in \eqref{starting_values}. Additionally,  Figure \ref{Solit_u_eta3} considers a case where the wave velocity is outside the theoretical interval of existence established in the previous section. We take the wave velocity as $\omega = 0.4$, which, for the parameters chosen for the adopted model, is outside the interval of existence $0< |\omega| < \min\left\{1,\frac{-a}{b}, \frac{-c}{b}, \frac{a_2}{b_2},\frac{c_2}{b_2}\right\} = 0.25$. The computational domain is $[0,L]= 150$, and $a_0 = 75$ in \eqref{starting_values}. All other parameters are the same as those in the previous numerical experiments.

\begin{figure}[h!]
     \centering
     \begin{subfigure}[b]{0.45\textwidth}
         \centering
         \includegraphics[width=\textwidth, height=0.8\textwidth]{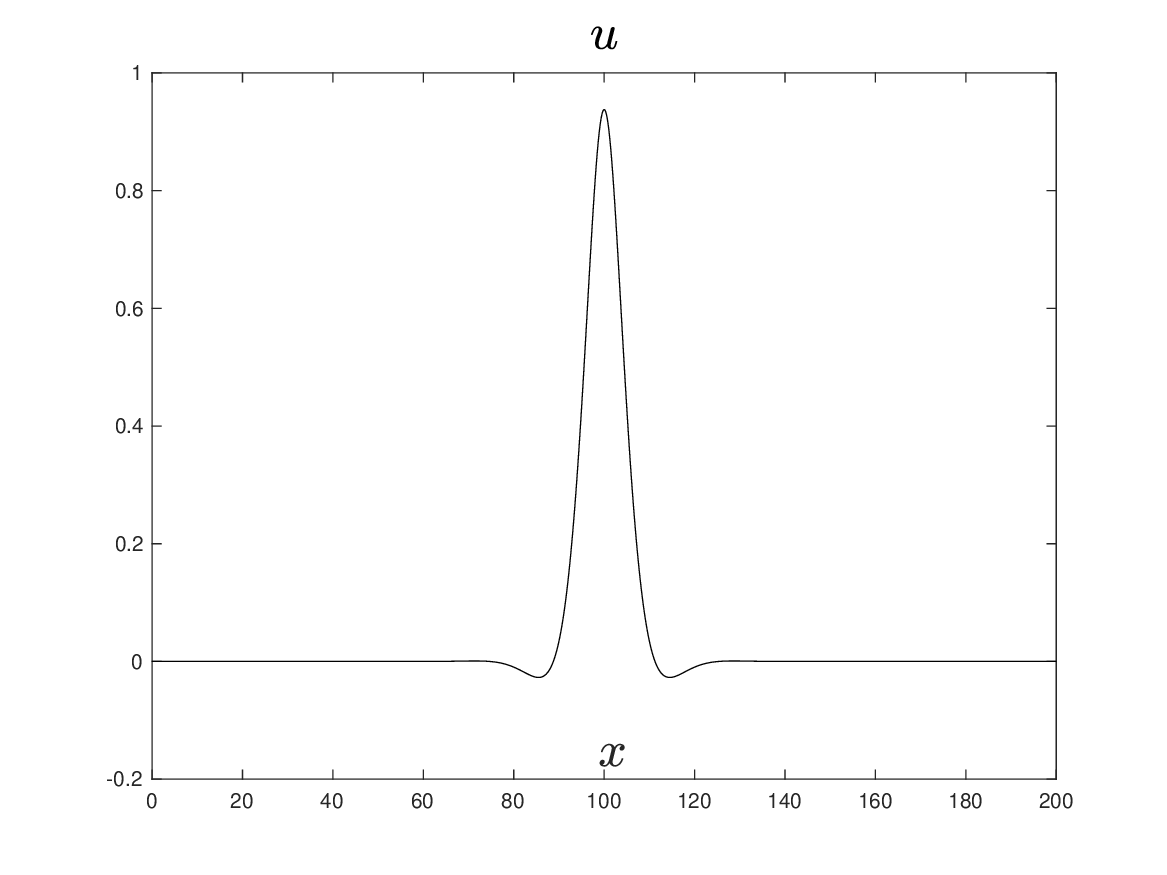}
     \end{subfigure}
     \begin{subfigure}[b]{0.45\textwidth}
         \centering
         \includegraphics[width=\textwidth, height=0.8\textwidth]{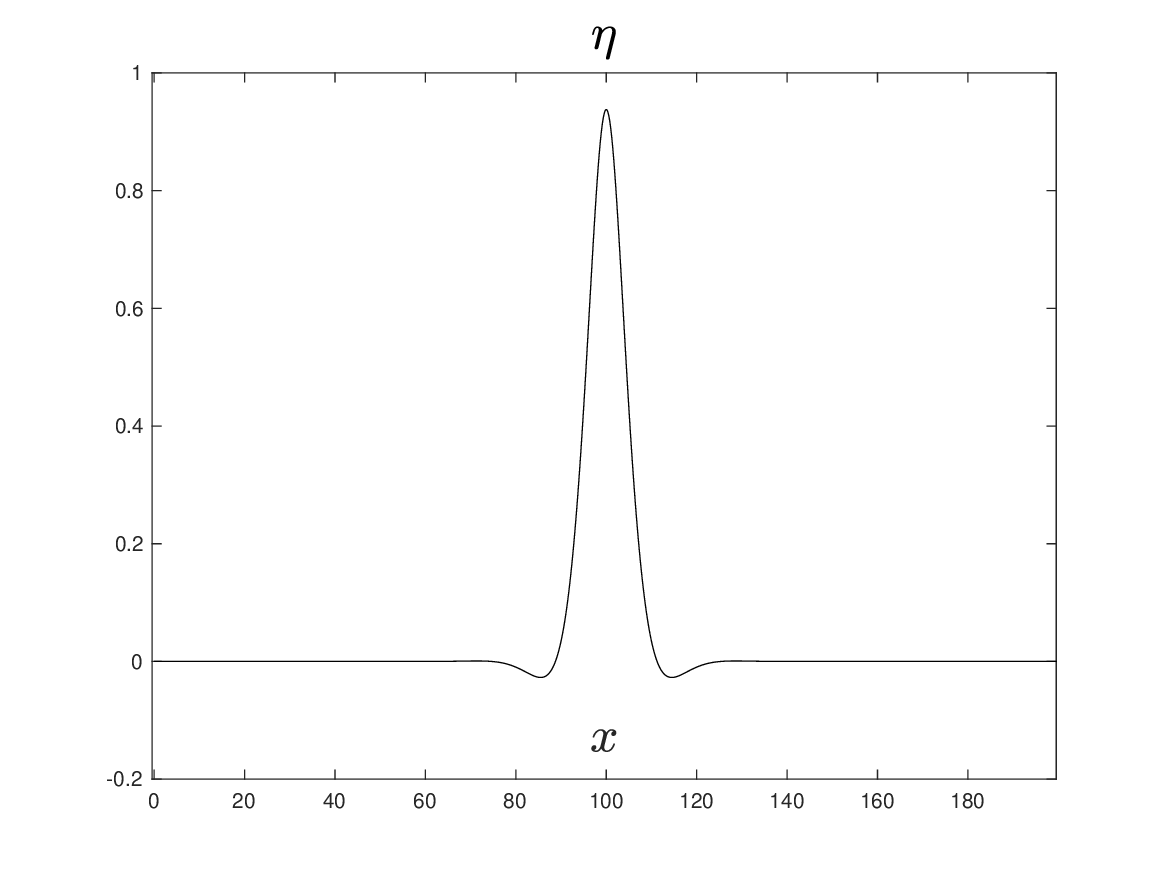}
     \end{subfigure}
        \caption{Solitary wave of the Boussinesq system \eqref{1bbl} computed with $b=d =2$, $b_2 = d_2=5$, $a=-2$, $c=-2$, $a_2=20$, $c_2=20$, $p=8$ and wave velocity $\omega = 0.8$.  }
        \label{Solit_u_eta1}
\end{figure}

\newpage

\begin{figure}[h!]
     \centering
     \begin{subfigure}[b]{0.45\textwidth}
         \centering
         \includegraphics[width=\textwidth, height=0.8\textwidth]{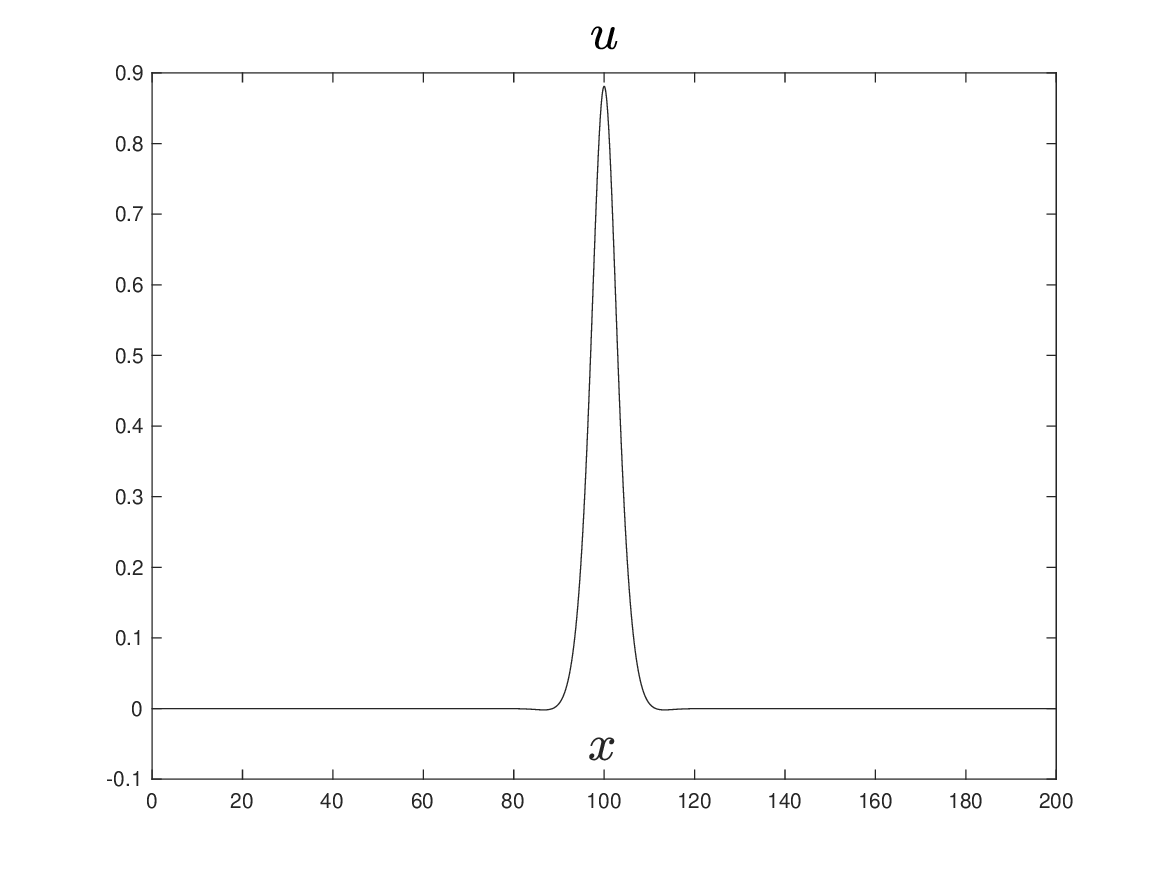}
     \end{subfigure}
     \begin{subfigure}[b]{0.45\textwidth}
         \centering
         \includegraphics[width=\textwidth, height=0.8\textwidth]{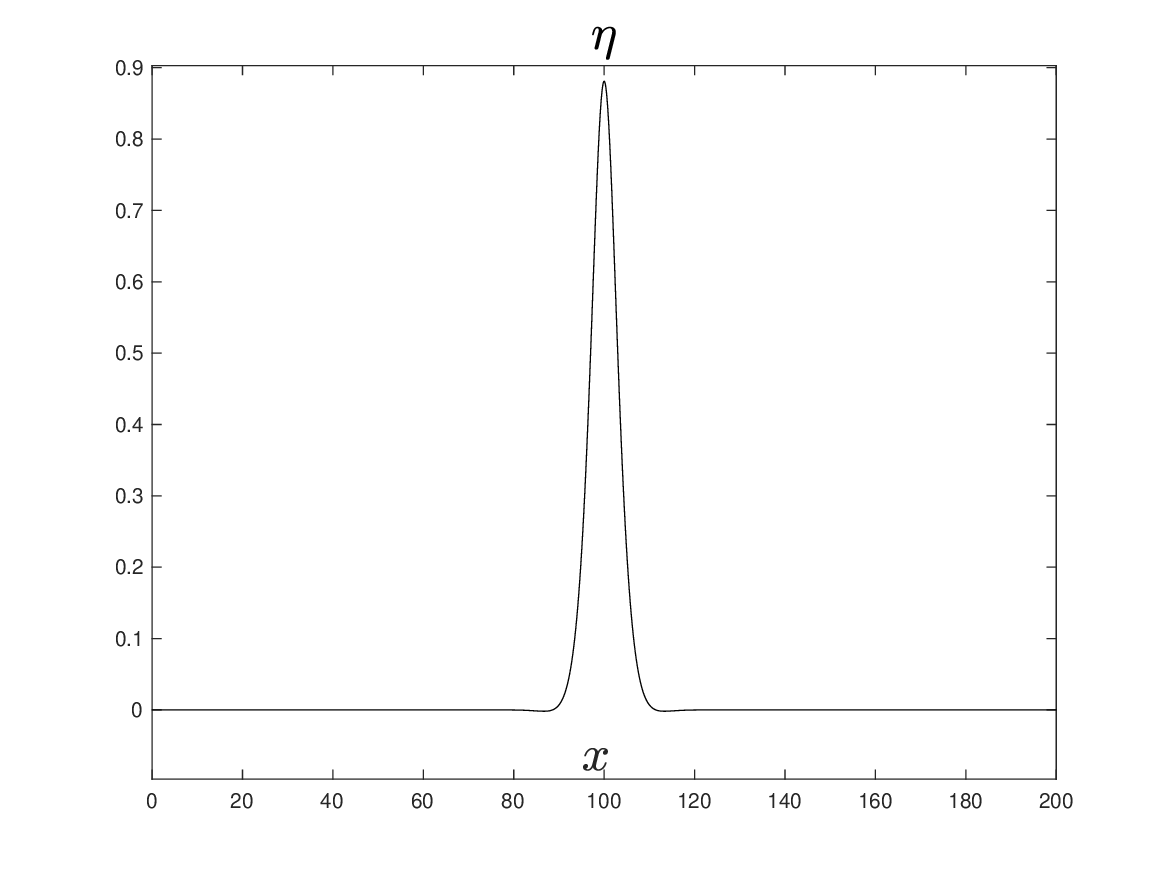}
     \end{subfigure}
        \caption{Solitary wave of the Boussinesq system \eqref{1bbl} computed with $b=d =4$, $b_2 = d_2=2$, $a=-4$, $c=-4$, $a_2=4$, $c_2=4$, $p=5$ and wave velocity $\omega = 0.8$.  }
        \label{Solit_u_eta2}
\end{figure}

\begin{figure}[h!]
     \centering
     \begin{subfigure}[b]{0.45\textwidth}
         \centering
         \includegraphics[width=\textwidth, height=0.8\textwidth]{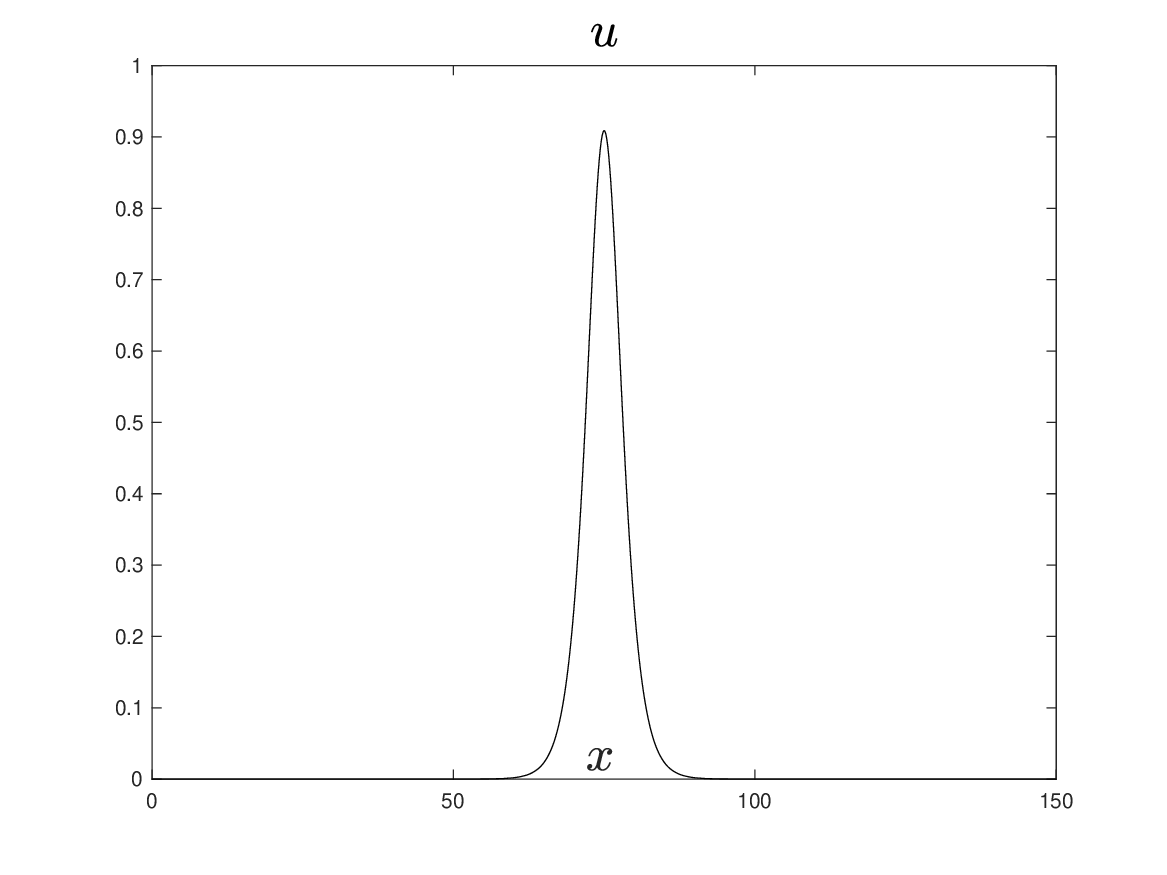}
     \end{subfigure}
     \begin{subfigure}[b]{0.45\textwidth}
         \centering
         \includegraphics[width=\textwidth, height=0.8\textwidth]{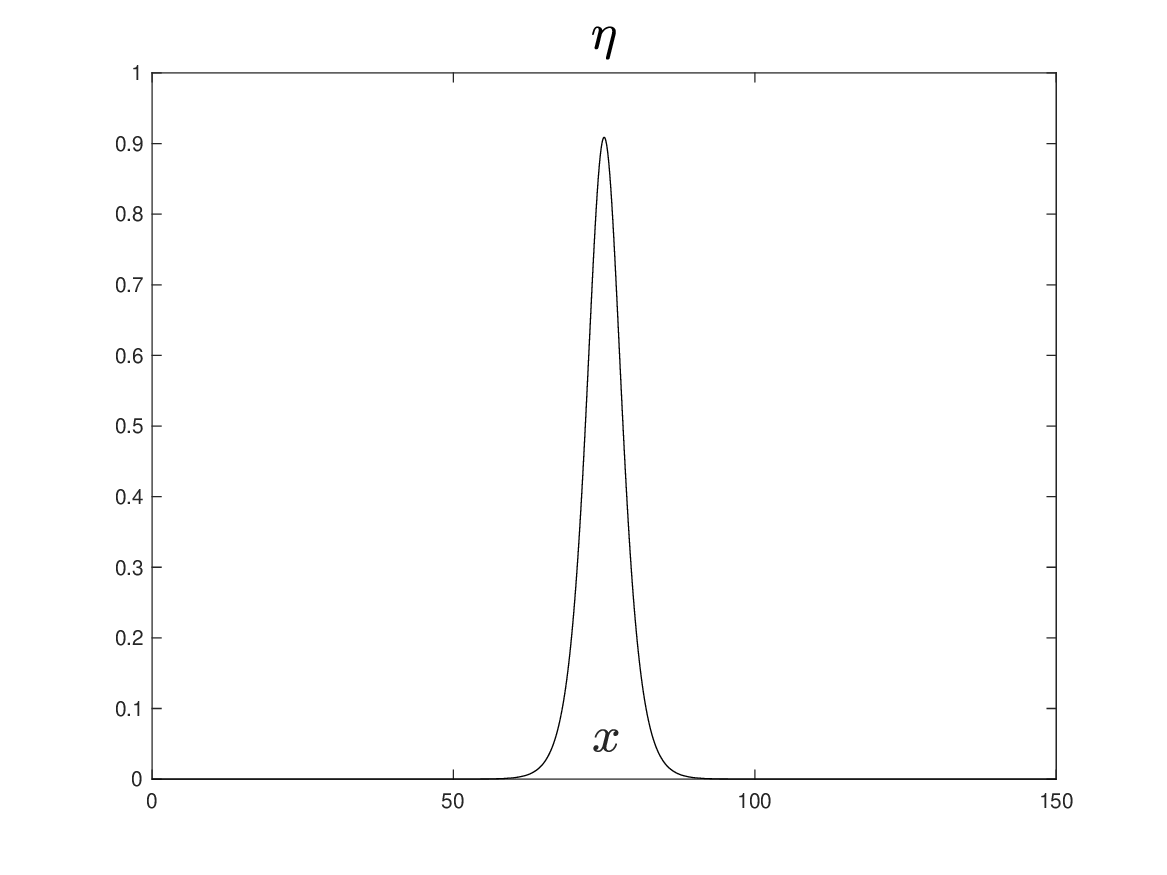}
     \end{subfigure}
        \caption{Solitary wave of the Boussinesq system \eqref{1bbl} computed with $b=d =4$, $b_2 = d_2=2$, $a=-4$, $c=-4$, $a_2=0.5$, $c_2=0.5$, $p=1$ and wave velocity $\omega = 0.4$.  }
        \label{Solit_u_eta3}
\end{figure}

Figure \ref{Solit_u_eta4} presents another experiment of this type with different model parameters, where we compute a solitary wave with velocity $\omega = 0.4$. In this case, the theoretical existence interval for solitary wave velocities is $0 < |\omega| < \min\left\{1,\frac{-a}{b}, \frac{-c}{b}, \frac{a_2}{b_2},\frac{c_2}{b_2}\right\} = 0.33$. All numerical parameters are identical to those used in Figure \ref{Solit_u_eta3}.

\begin{figure}[h!]
     \centering
     \begin{subfigure}[b]{0.45\textwidth}
         \centering
         \includegraphics[width=\textwidth, height=0.8\textwidth]{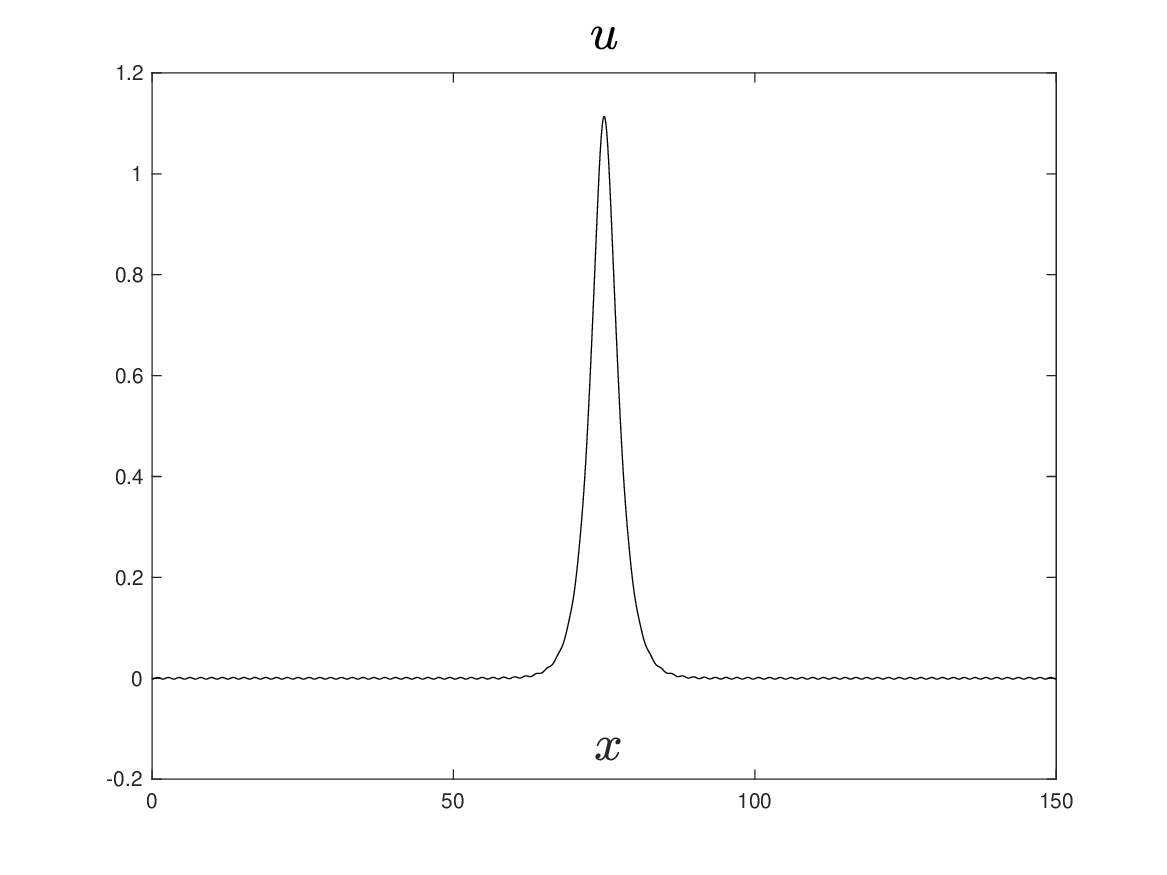}
     \end{subfigure}
     \begin{subfigure}[b]{0.45\textwidth}
         \centering
         \includegraphics[width=\textwidth, height=0.8\textwidth]{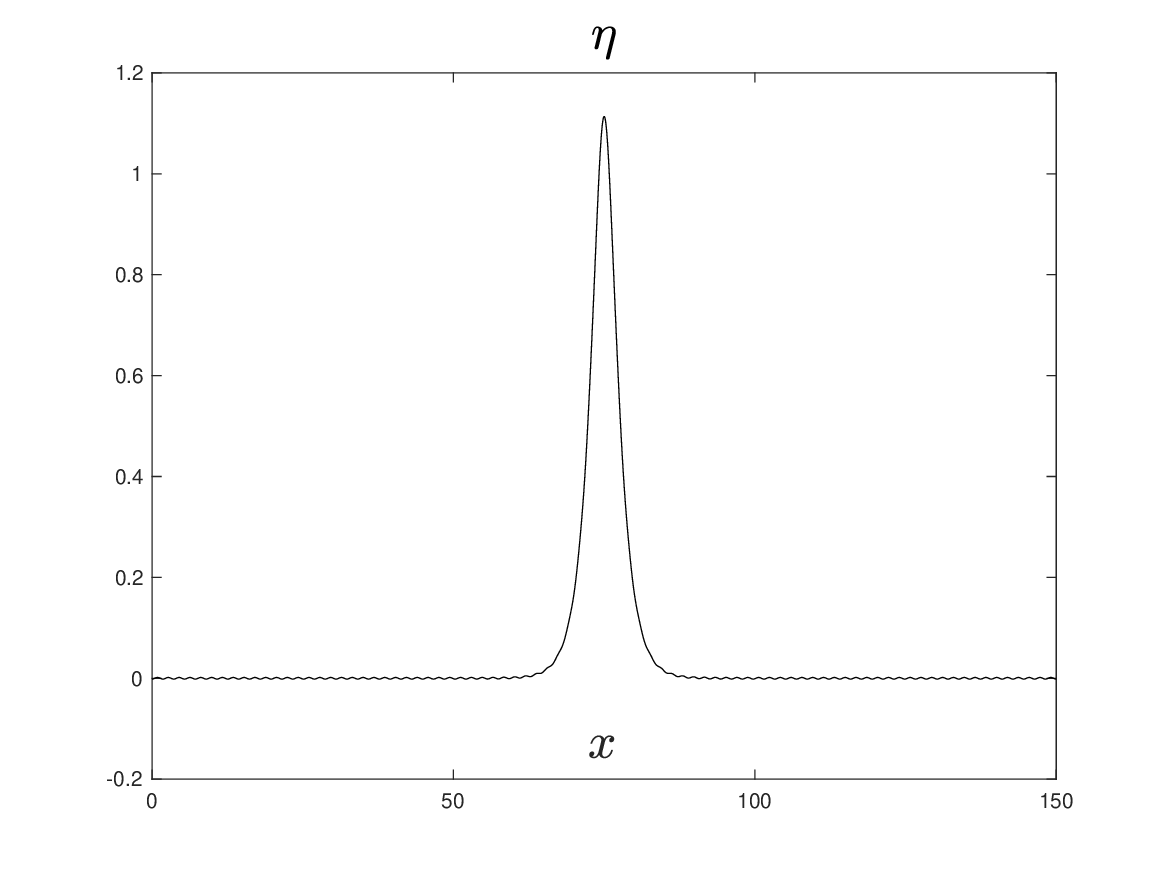}
     \end{subfigure}
        \caption{Solitary wave of the Boussinesq system \eqref{1bbl} computed with $b=d =4$, $b_2 = d_2=3$, $a=-4$, $c=-4$, $a_2=1$, $c_2=1$, $p=2$ and wave velocity $\omega = 0.4$.  }
        \label{Solit_u_eta4}
\end{figure}

\newpage
These numerical experiments indicate that solitary wave solutions of the Boussinesq system \eqref{1bbl} may exist beyond the previously established theoretical interval. However, the exact velocity range for the existence of solitary waves seems to be influenced by additional factors, such as the nonlinear exponent $p$.
\subsubsection{Verifying the approximations of solitary wave solutions} We now aim to verify the approximations of the solitary wave solutions presented in Figures \ref{Solit_u_eta1} and \ref{Solit_u_eta2}. To do this, we will compute the propagation of a solution in the one-dimensional fifth-order Boussinesq system \eqref{1bbl}. First, we apply the Fourier transform with respect to the spatial variable $x$ in this system, resulting in
\begin{align*}
\begin{cases}
&(1 + d k^2 + d_2 k^4) \hat{u}_t + ik \hat{\eta} + c (ik)^3 \hat{\eta} + c_2 (ik)^5 \hat{\eta} = ik \widehat{H}_1, \\
&(1 + b k^2 + b_2 k^4) \hat{\eta}_t + ik \hat{u} + a (ik)^3 \hat{u} + a_2 (ik)^5 \hat{u} = ik \widehat{H}_2,
\end{cases}
\end{align*}
which can be rewritten as
\begin{align}
\hat{u}_t = W_1 \hat{\eta} + \mathcal{L}_1, \label{u_equation}
\end{align}
and
\begin{align}
\hat{\eta}_t = W_2 \hat{u} + \mathcal{L}_2 \label{eta_equation},
\end{align}
where
\begin{align*}
&W_1 = \frac{ -i k + c i k^3 - c_2 i k^5}{ 1 + d k^2 + d_2 k^4 }, \quad W_2 = \frac{ -i k + a i k^3 - a_2 i k^5}{ 1 + b k^2 + b_2 k^4},\\
&\mathcal{L}_1 = \frac{i k \widehat{H}_1}{1 + d k^2 + d_2 k^4}, \quad \text{and} \quad \mathcal{L}_2 = \frac{i k \widehat{H}_2}{1 + b k^2 + b_2 k^4}.
\end{align*}
To approximate the time evolution of solutions to the full Boussinesq system \eqref{u_equation}-\eqref{eta_equation} numerically, we utilize the following \textit{time-stepping scheme}:
\begin{align}
&\frac{ \hat{u}^{n+1} - \hat{u}^n}{\Delta t} = W_1 ( \theta \hat{\eta}^{n+1} + (1-\theta)\hat{\eta}^n ) + \mathcal{L}_1^n, \label{theta_u_eq}
\end{align}
and
\begin{align}
&\frac{\hat{\eta}^{n+1} - \hat{\eta}^n }{\Delta t} = W_2 ( \theta \hat{u}^{n+1} + (1-\theta) \hat{u}^n ) + \mathcal{L}_2^n,\label{theta_eta_eq}
\end{align}
where $\theta \in [0,1]$ is a parameter. Now, by solving for $\hat{u}^{n+1}$ from equation \eqref{theta_u_eq}, and substituting it into equation \eqref{theta_eta_eq}, we obtain the system
\begin{align}
&\hat{u}^{n+1} = \frac{(1 + \Delta t^2 W_1 W_2 \theta (1-\theta)  ) \hat{u}^n + (\Delta t W_1 (1-\theta) + \Delta t \theta W_1 )\hat{\eta}^n + \Delta t^2 W_1 \theta \mathcal{L}_2^n + \Delta t \mathcal{L}_1^n   }{1 -\Delta t^2 W_1 W_2 \theta^2 }, \label{evol1}
\end{align}
and
\begin{align}
&\hat{\eta}^{n+1} = \hat{\eta}^n + \Delta t \theta W_2 \hat{u}^{n+1} + \Delta t (1-\theta) W_2 \hat{u}^n + \Delta t \mathcal{L}_2^n. \label{evol2}
\end{align}
Here, the superscript $n$ indicates that the corresponding quantity is evaluated at time $t = n \Delta t$. 

The quantities above allow us to assess the approximations of the solitary wave solutions depicted in Figures \ref{Solit_u_eta1} and \ref{Solit_u_eta2}. Specifically, Figures \ref{Solit_u_eta1_t=10} and \ref{Solit_u_eta2_t=10}, using the numerical scheme \eqref{evol1}-\eqref{evol2}, confirm the approximations of the solitary wave solutions shown in Figures \ref{Solit_u_eta1} and \ref{Solit_u_eta2}, respectively. The numerical parameters used are $N=2^{12}$ FFT points over the computational interval $[0,L]=200$, $\Delta t = 10/10000 = 0.001$ is the time step size. Note that in both experiments, the profiles of $u$ and $\eta$ propagate to the right with approximately constant shape and velocity $\omega = 0.8$, as expected. Neither dissipation nor dispersion is observed during wave propagation. These computer simulations provide evidence that the solutions computed using the scheme \eqref{iteration_3}-\eqref{iteration_4} behave as solitary wave solutions to the Boussinesq system \eqref{1bbl}, indicating that the numerical method performs quite well.

\begin{figure}[h!]
     \centering
     \begin{subfigure}[b]{0.45\textwidth}
         \centering
         \includegraphics[width=\textwidth, height=0.8\textwidth]{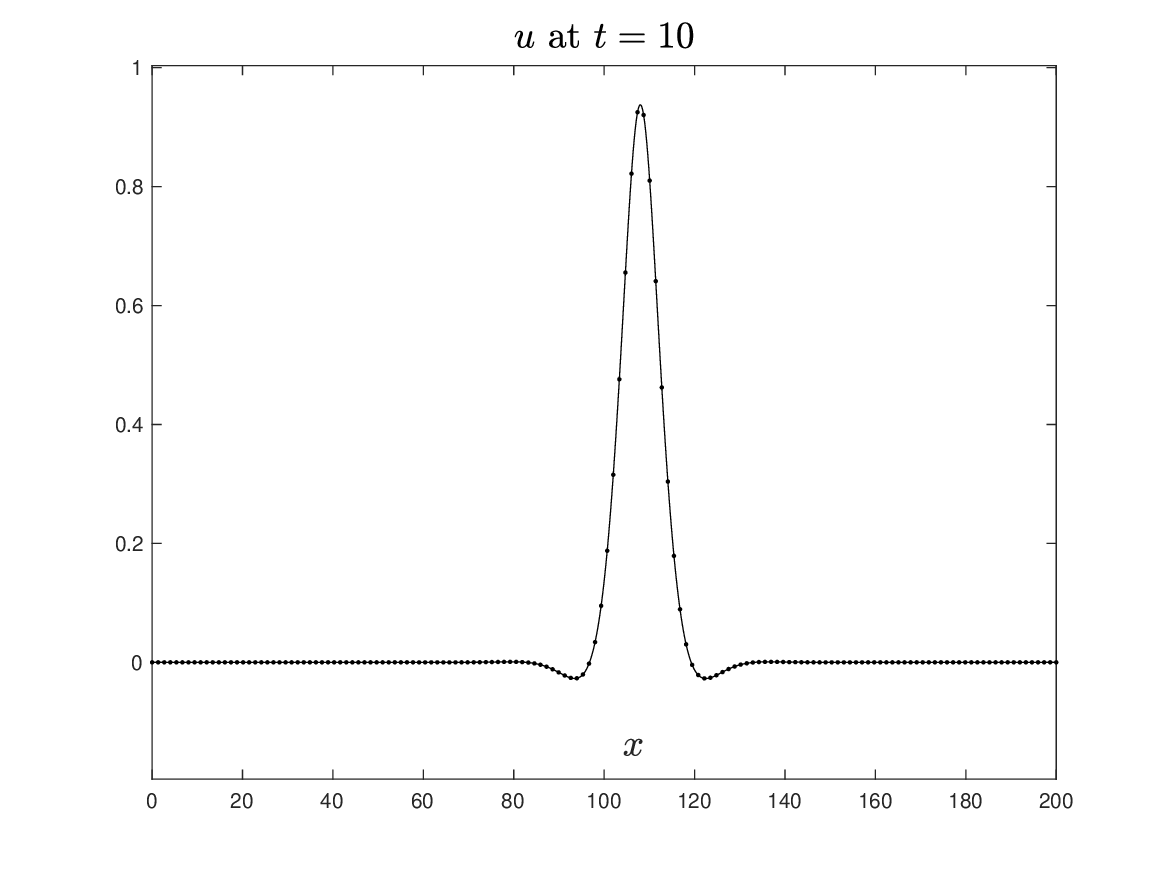}
     \end{subfigure}
     \begin{subfigure}[b]{0.45\textwidth}
         \centering
         \includegraphics[width=\textwidth, height=0.8\textwidth]{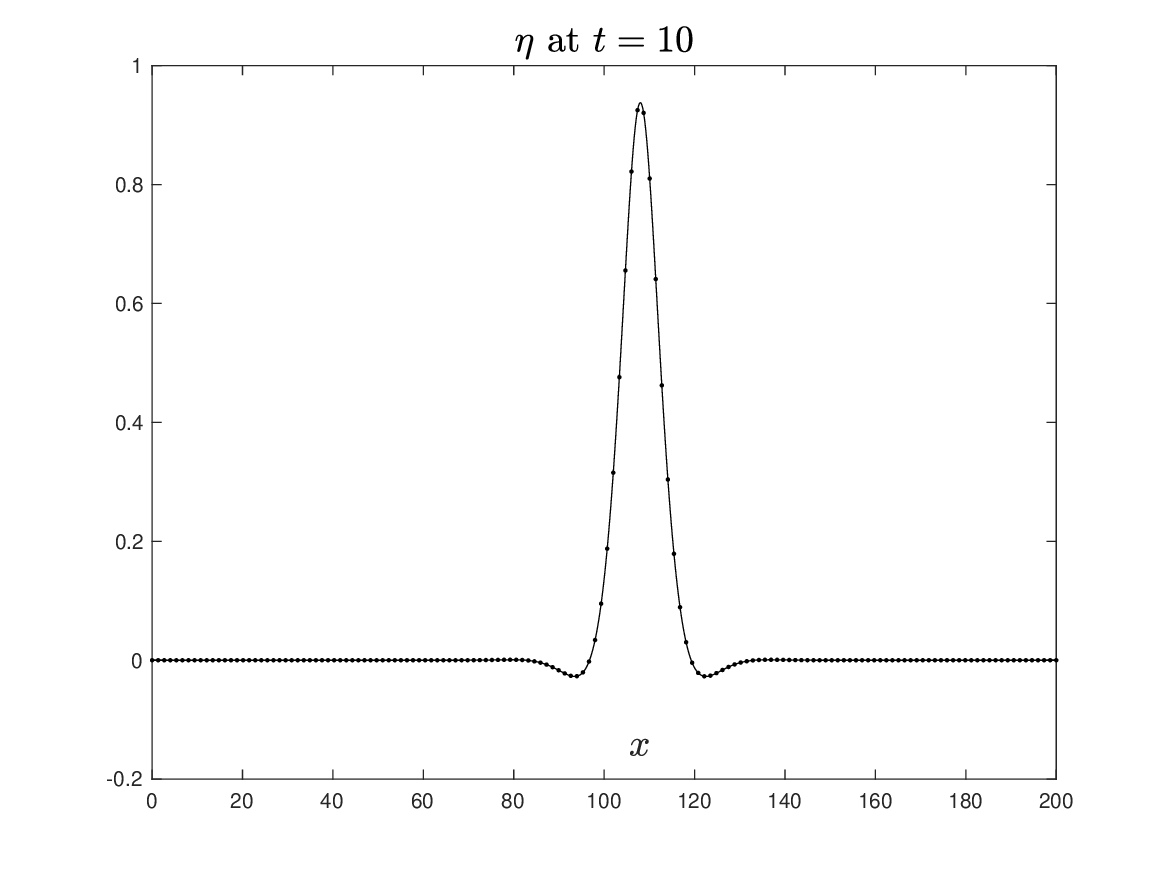}
     \end{subfigure}
        \caption{Solitary wave solution $(u, \eta)$ of the Boussinesq system \eqref{1bbl} computed at $t=10$, with parameters $b=d =2$, $b_2 = d_2=5$, $a=-2$, $c=-2$, $a_2=20$, $c_2=20$, $p=8$ and wave velocity $\omega = 0.8$. Dotted line: numerical solution computed using the scheme in \eqref{evol1}-\eqref{evol2}, with the initial condition given in Figure \ref{Solit_u_eta1}. Solid line: Approximate solitary wave shown in Figure \ref{Solit_u_eta1}, translated to the right by $\omega t = 8$. }
        \label{Solit_u_eta1_t=10}
\end{figure}

\begin{figure}[h!]
     \centering
     \begin{subfigure}[b]{0.45\textwidth}
         \centering
         \includegraphics[width=\textwidth, height=0.8\textwidth]{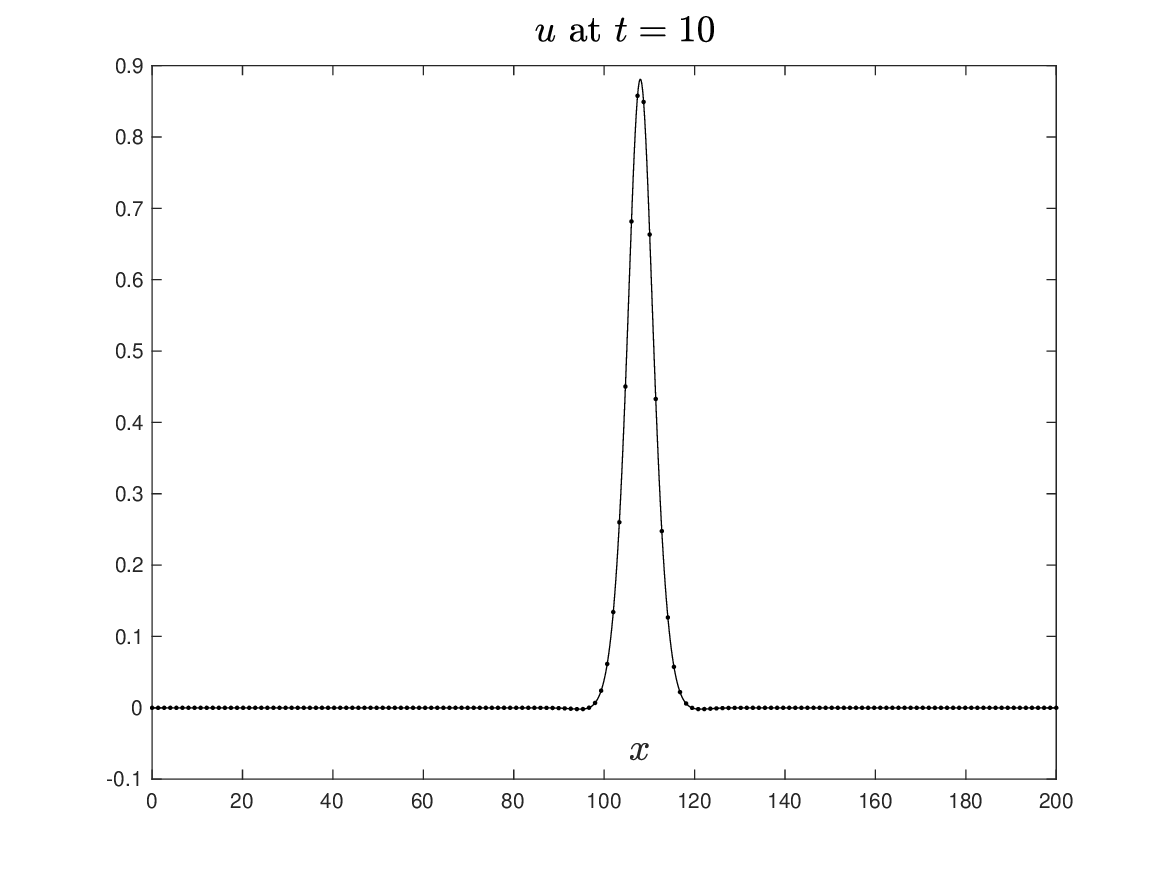}
     \end{subfigure}
     \begin{subfigure}[b]{0.45\textwidth}
         \centering
         \includegraphics[width=\textwidth, height=0.8\textwidth]{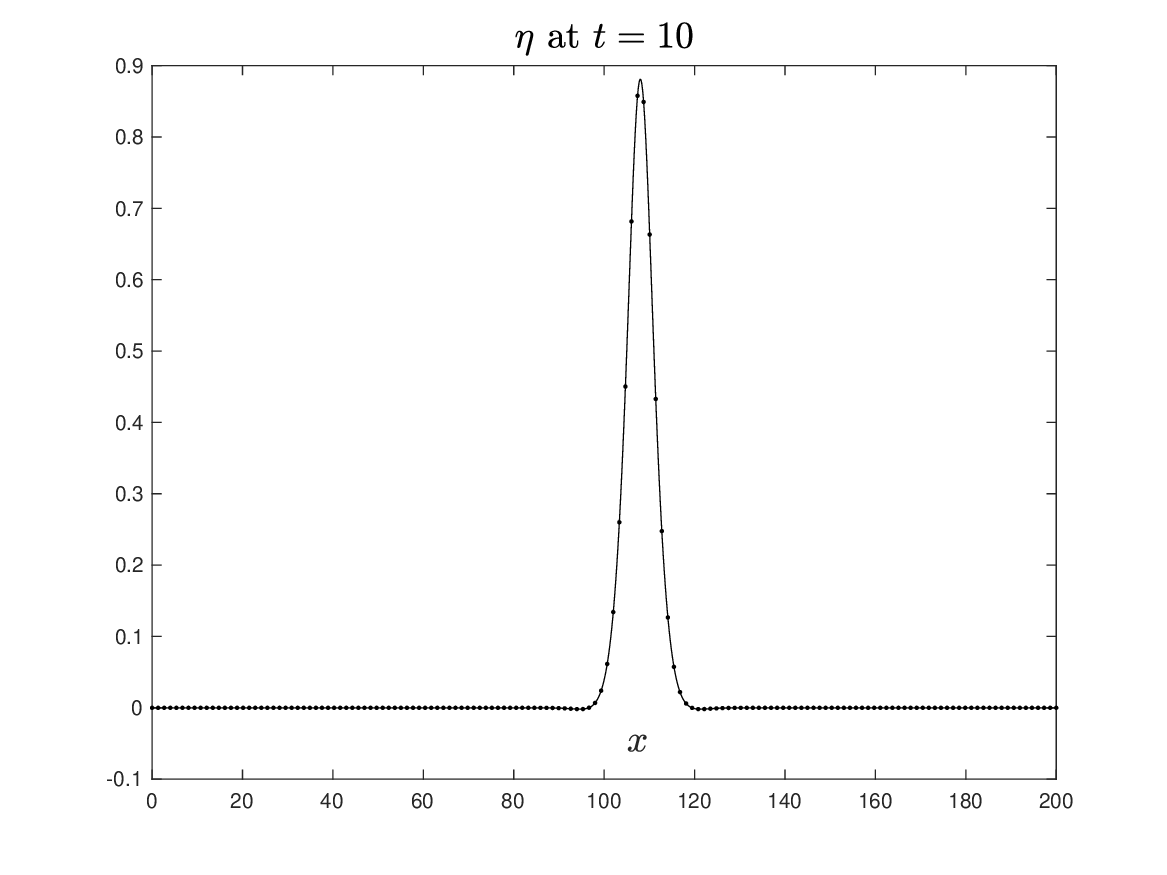}
     \end{subfigure}
        \caption{Solitary wave solution $(u,\eta)$ of the Boussinesq system \eqref{1bbl} computed at $t=10$, with parameters $b=d =2$, $b_2 = d_2=2$, $a=-4$, $c=-4$, $a_2=4$, $c_2=4$, $p=5$ and wave velocity $\omega = 0.8$. Dotted line: numerical solution computed using the scheme in \eqref{evol1}-\eqref{evol2}, with the initial condition given in Figure \ref{Solit_u_eta2}. Solid line: Approximate solitary wave shown in Figure \ref{Solit_u_eta2}, translated to the right by $\omega t = 8$. }
        \label{Solit_u_eta2_t=10}
\end{figure}

\newpage
\subsection{Non-homogeneous case}

In this set of computer simulations, we aim to explore the case where the nonlinearities $H_1, H_2$ are non-homogeneous. Specifically, we consider
\begin{equation*}
F(\eta, \partial_x\eta, u, \partial_xu) = \frac14 u^4  + u (\partial_x u)^2 + \frac14 \eta^4 + \eta (\partial_x \eta)^2.
\end{equation*}
By substituting into the expressions for the functions $H_1, H_2$, we obtain
\begin{align*}
&H_1(\eta, \partial_x\eta, \partial^2_x\eta, u, \partial_xu,\partial^2_x u) = \eta^3 - (\partial_x\eta)^2 - 2(\partial_x^2\eta)\eta,
\end{align*}
and
\begin{align*}
&H_2(\eta, \partial_x\eta, \partial^2_x\eta, u, \partial_xu,\partial^2_x u) = u^3 - (\partial_xu)^2 - 2(\partial^2_x u) u.
\end{align*}
In this case, we use a different numerical scheme from the one applied in the homogeneous case, as the stabilizing factors cannot be directly computed.

To approximate the solitary wave equations \eqref{trav-eqs}, we
 expand the unknowns $\psi(x)$ and $v(x)$ functions with period $L=2l$ (for sufficiently large $l$), using the following cosine expansions:
\begin{equation*}\label{cosine_expansions}
\begin{cases}
&\displaystyle\psi(x) = \psi_0 + \sum_{k=1}^{N/2} \psi_k \cos \Big( \frac{k\pi x}{l} \Big),\\
\\
&\displaystyle v(x) = v_0 + \sum_{k=1}^{N/2} v_k \cos \Big( \frac{k \pi x}{l} \Big).
\end{cases}
\end{equation*}
Substituting these expressions into the solitary wave equations \eqref{trav-eqs}, and evaluating at the $N/2 +1$ collocation points
$$
x_j = \frac{2l (j-1)}{N}, ~~ j =1,..., N/2 +1,
$$
we obtain a system of $N + 2$ nonlinear equations of the form
$$
F(\psi_0, \psi_1,..., \psi_{N/2}, v_0, v_1,..., v_{N/2}) = 0,
$$
where the unknowns are the coefficients $\psi_k, v_k$. This nonlinear system is solved using Newton's iteration. The iteration process is stopped when the relative difference between two successive approximations and the value of the field $F:\mathbb{R}^{N+2} \to \mathbb{R}^{N+2}$ is less than $10^{-12}$.
The initial guess for the Newton iteration is taken as
\[
v^0(x) = \psi^0(x) = e^{-0.05(x-a_0)^2},
\]
with $a_0=50$.
The result of this numerical experiment is presented in Figure \ref{Solit_u_eta5} using $N = 2^{12}$ FFT points over the computational domain $[0,100]$.

\begin{figure}[h!]
     \centering
     \begin{subfigure}[b]{0.45\textwidth}
         \centering
         \includegraphics[width=\textwidth, height=0.8\textwidth]{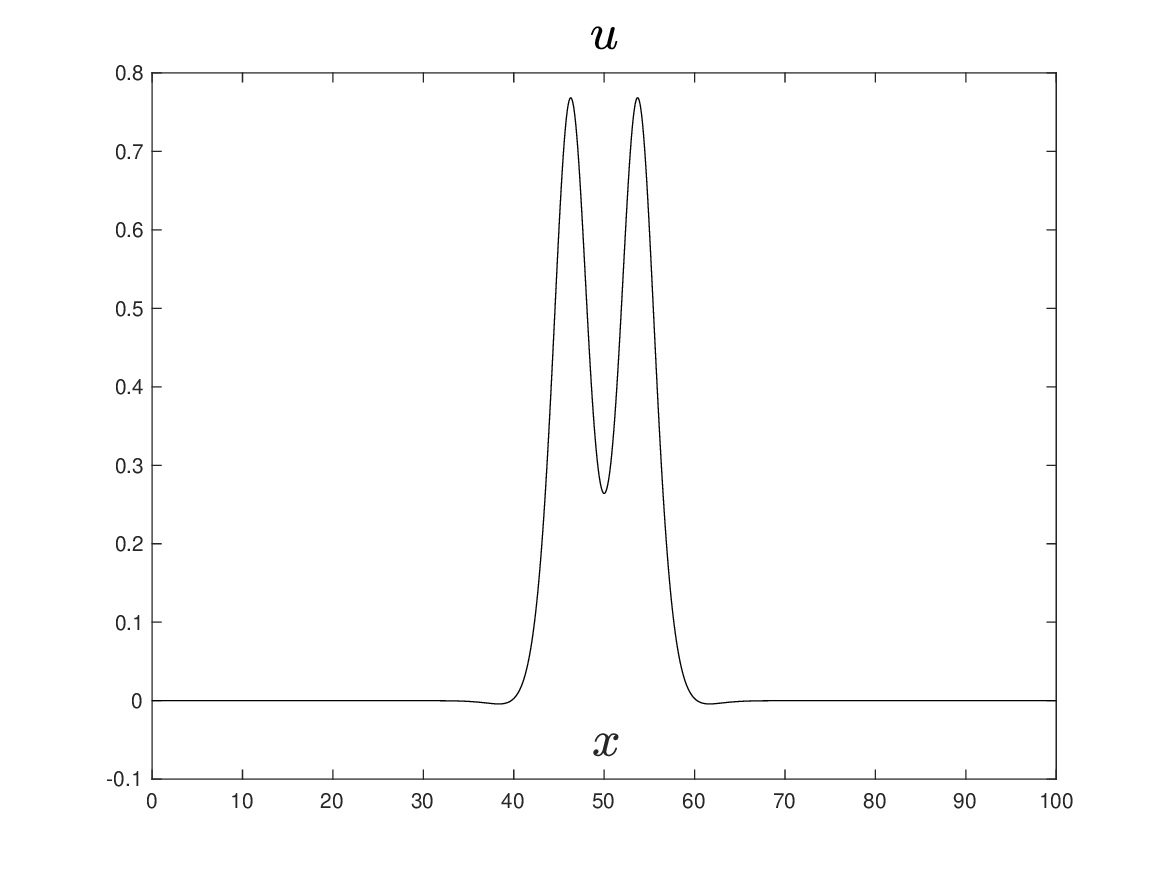}
     \end{subfigure}
     \begin{subfigure}[b]{0.45\textwidth}
         \centering
         \includegraphics[width=\textwidth, height=0.8\textwidth]{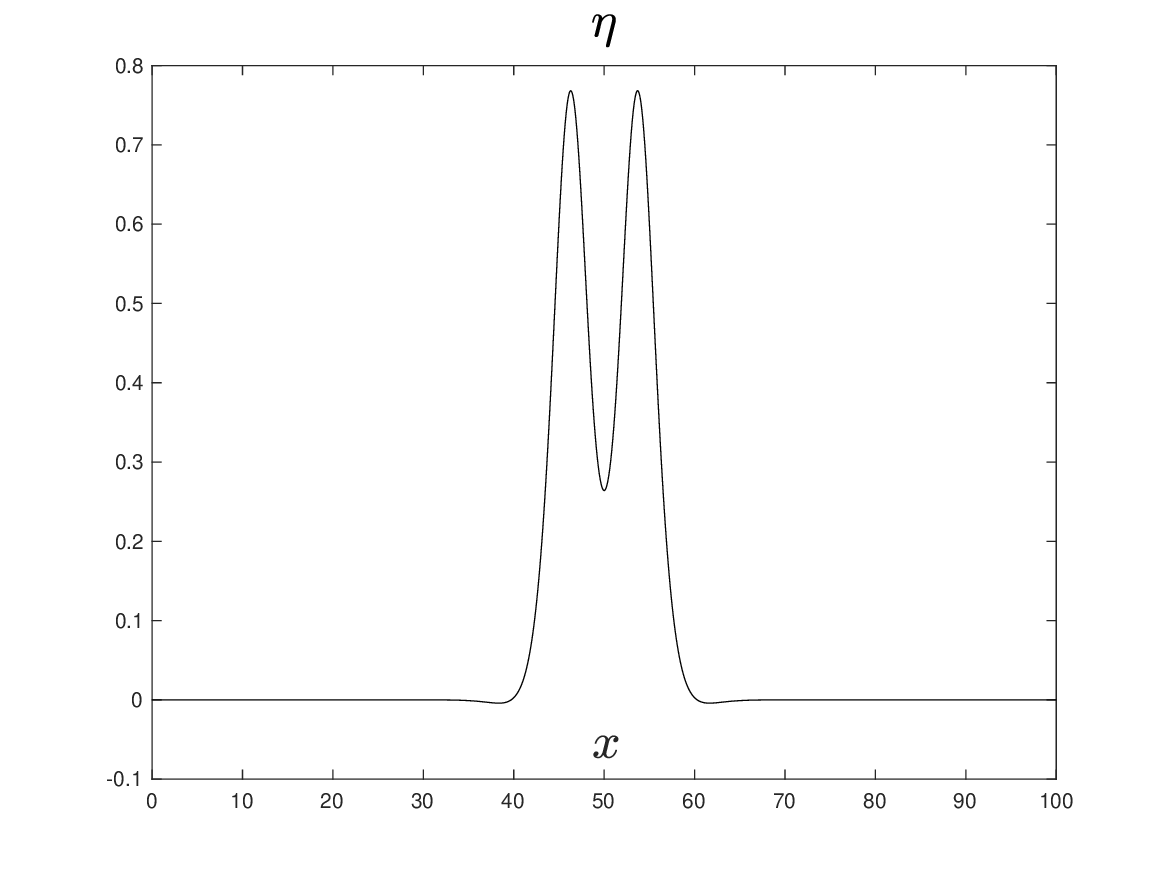}
     \end{subfigure}
        \caption{Solitary wave of the Boussinesq system \eqref{1bbl} computed with $b=d =2$, $b_2 = d_2=3$, $a=-2$, $c=-2$, $a_2=3$, $c_2=3$, and wave velocity $\omega = 0.6$.  }
        \label{Solit_u_eta5}
\end{figure}


In Figure \ref{Solit_u_eta5_t=10}, we apply the numerical scheme defined by equations \eqref{evol1}-\eqref{evol2} to validate the approximation of the solitary wave solution shown in Figure \ref{Solit_u_eta5}. The numerical setup uses $N=2^{12}$ FFT points over the computational domain $[0,L]=[0,100]$, with a time step size of $\Delta t = 10/10000 = 0.001$. As expected, the wave profiles propagate to the right, preserving their shape and moving at approximately the expected velocity.

\begin{figure}[h!]
     \centering
     \begin{subfigure}[b]{0.45\textwidth}
         \centering
         \includegraphics[width=\textwidth, height=0.8\textwidth]{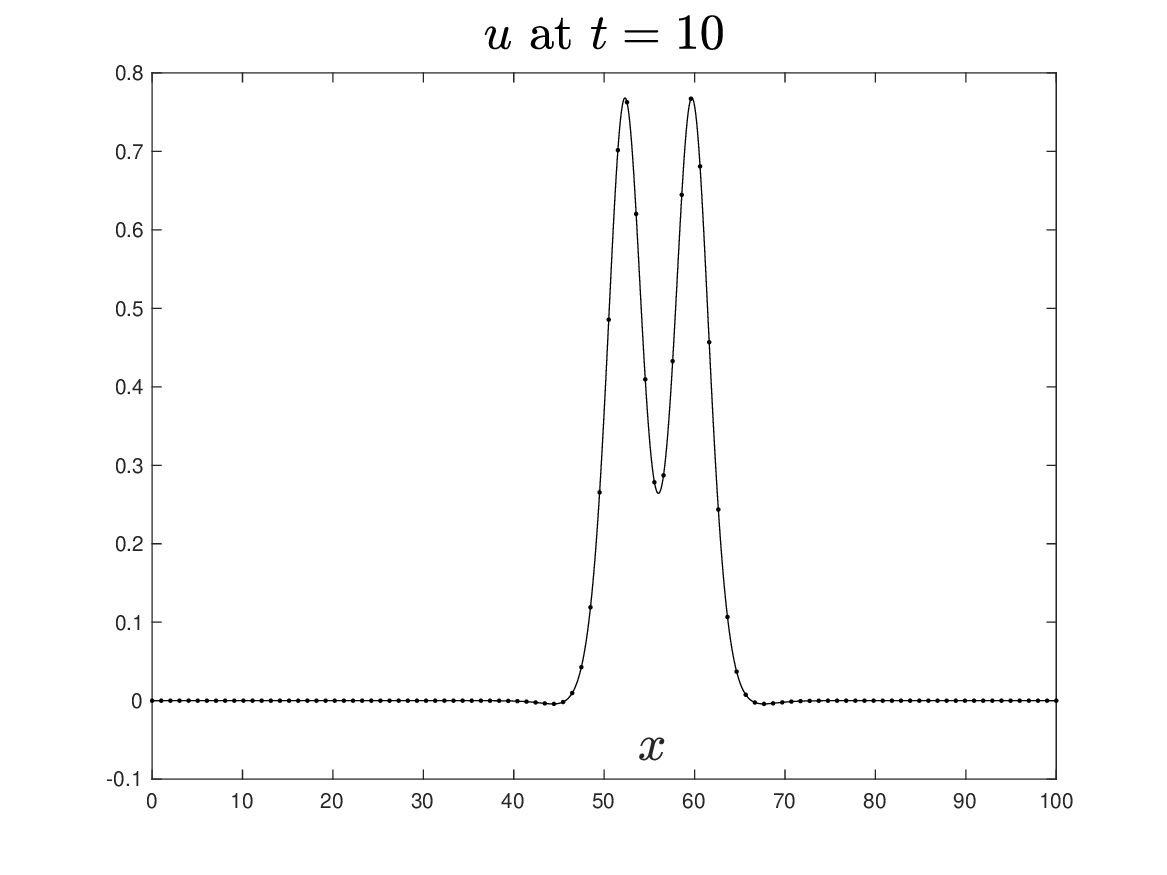}
     \end{subfigure}
     \begin{subfigure}[b]{0.45\textwidth}
         \centering
         \includegraphics[width=\textwidth, height=0.8\textwidth]{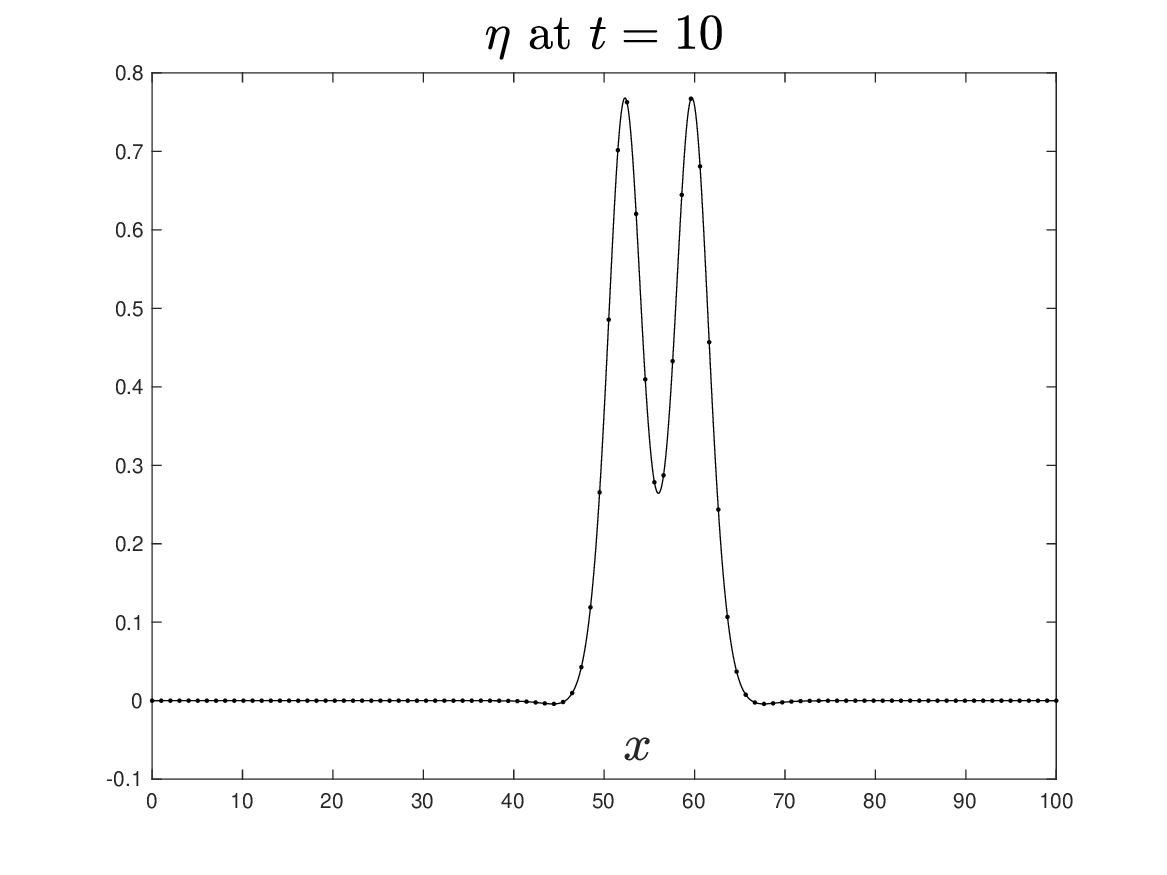}
     \end{subfigure}
        \caption{Solitary wave solution $(u,\eta)$ of the Boussinesq system \eqref{1bbl} computed at $t=10$, with parameters $b=d =2$, $b_2 = d_2=3$, $a=-2$, $c=-2$, $a_2=3$, $c_2=3$, and wave velocity $\omega = 0.6$. Dotted line: numerical solution computed using the scheme in \eqref{evol1}-\eqref{evol2}. Solid line: Approximate solitary wave shown in Figure \ref{Solit_u_eta5}. }
        \label{Solit_u_eta5_t=10}
\end{figure}

In Figure \ref{Solit_u_eta10}, a solitary wave solution is presented, computed using a different set of parameters satisfying $a \neq c$, $a_2 \neq c_2$. The numerical parameters remain the same as in the previous experiment. Unlike earlier cases, the height and shape of the component profiles $u$ and $\eta$ now differ noticeably.

\begin{figure}[h!]
     \centering
     \begin{subfigure}[b]{0.45\textwidth}
         \centering
         \includegraphics[width=\textwidth, height=0.8\textwidth]{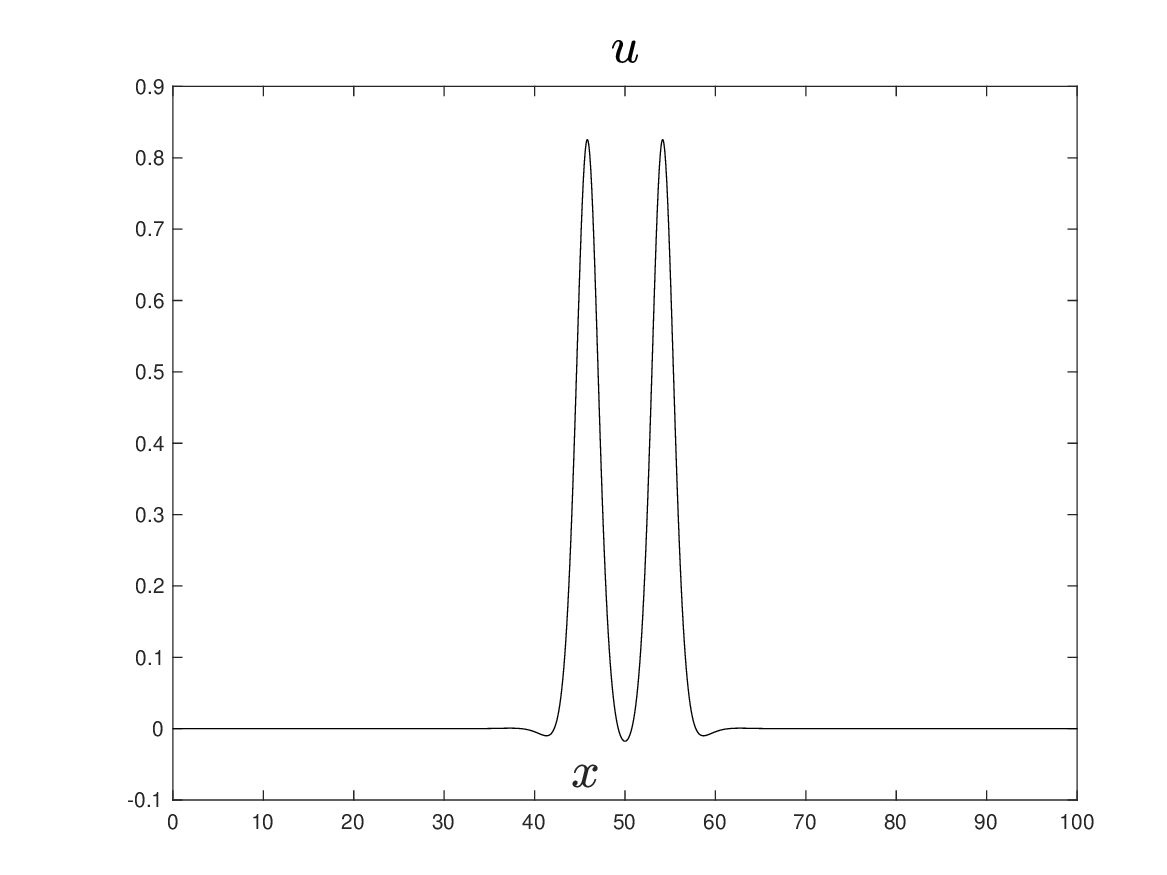}
     \end{subfigure}
     \begin{subfigure}[b]{0.45\textwidth}
         \centering
         \includegraphics[width=\textwidth, height=0.8\textwidth]{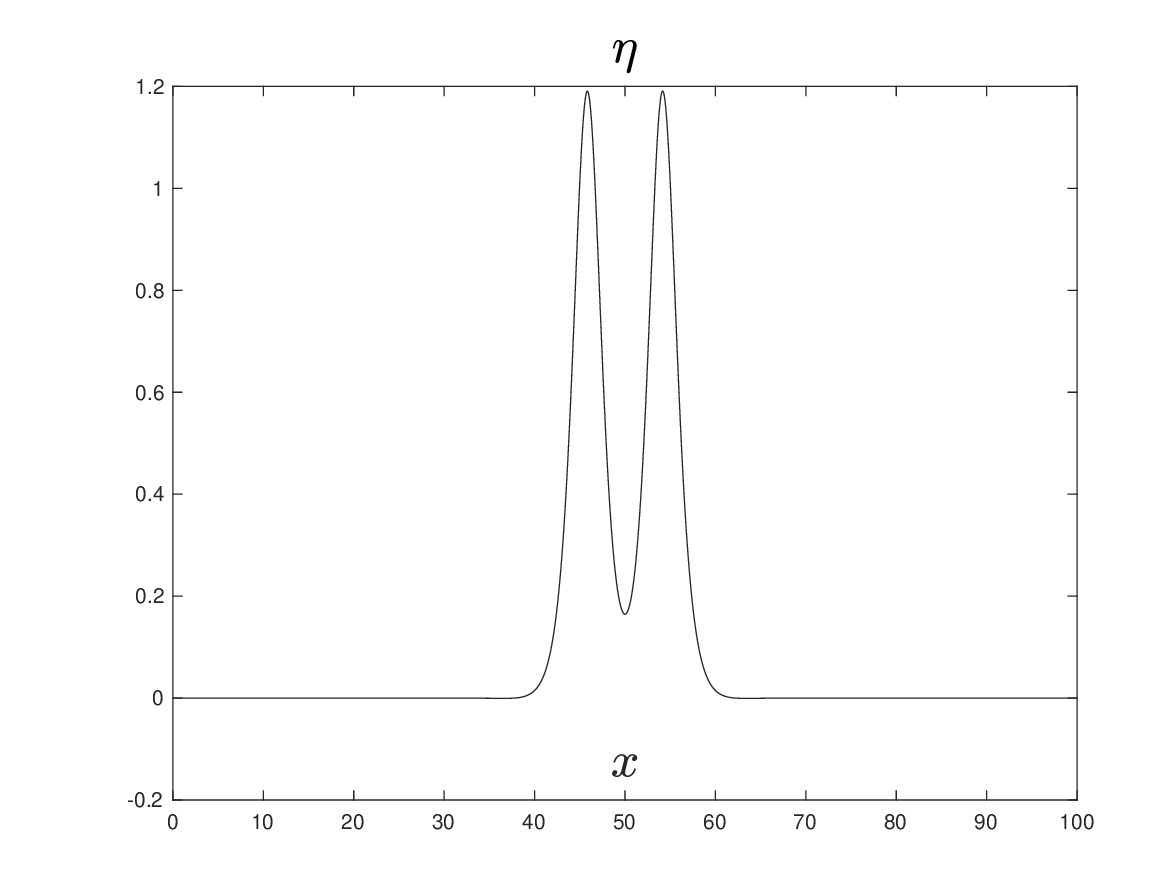}
     \end{subfigure}
        \caption{Solitary wave of the Boussinesq system \eqref{1bbl} computed with $b=d =2$, $b_2 = d_2 = 4$, $a=-1$, $c=-2$, $a_2=2$, $c_2=1$, and wave velocity $\omega = 0.2$. Note that $a \neq c$, $a_2 \neq c_2$.}
        \label{Solit_u_eta10}
\end{figure}

Finally, in Figure \ref{Solit_u_eta11}, we present a solitary wave solution computed with parameters satisfying $a \neq c$, $a_2 \neq c_2$ and using the initial guesses
\[
v^0(x) = e^{-0.05(x-50)^2}, ~~~~ \psi^0(x) = 1.5 e^{-0.1 (x-50)^2}.
\]
Note that in this case, $v^0(x) \neq \psi^0(x)$, highlighting the flexibility of the iterative solver in handling asymmetric initial data.
Moreover, as previously observed, the solution profiles of $u$ and $\eta$ remain asymmetric and display significantly different dynamics.

\begin{figure}[h!]
     \centering
     \begin{subfigure}[b]{0.45\textwidth}
         \centering
         \includegraphics[width=\textwidth, height=0.8\textwidth]{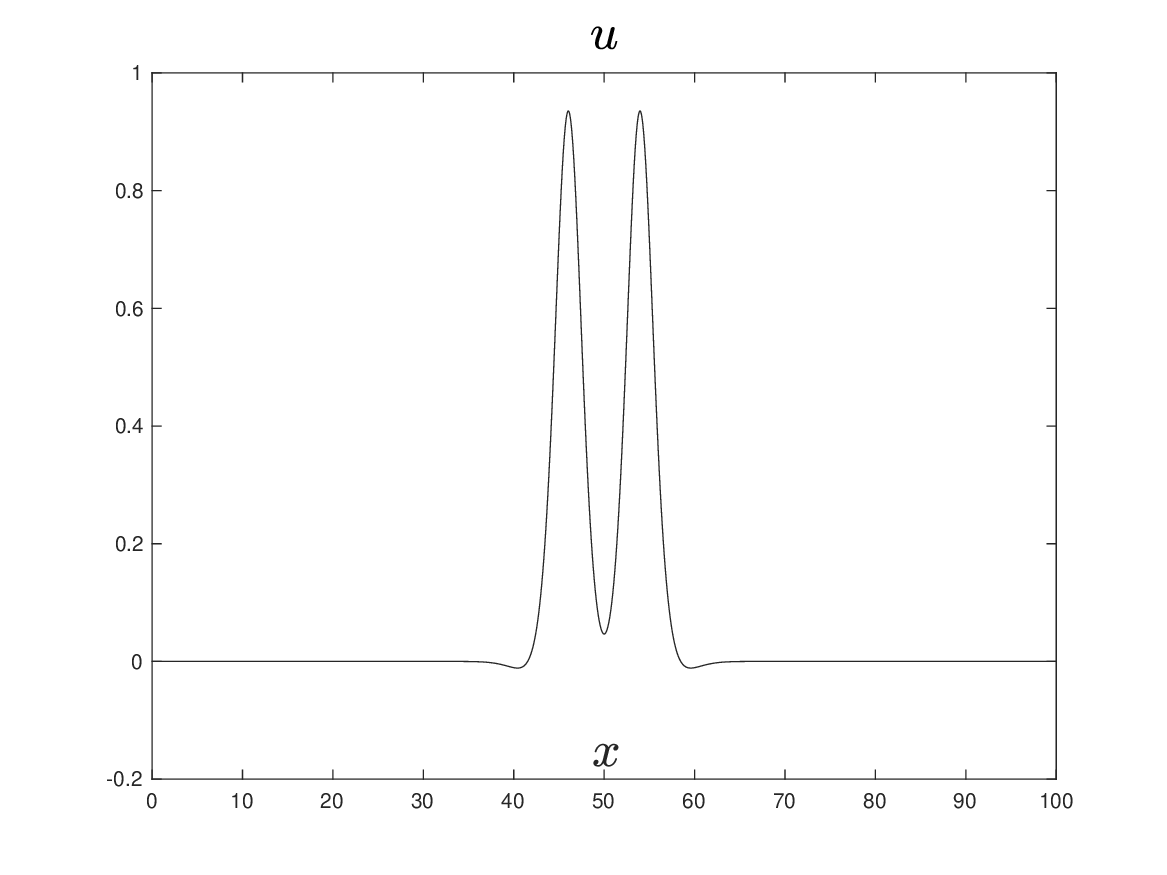}
     \end{subfigure}
     \begin{subfigure}[b]{0.45\textwidth}
         \centering
         \includegraphics[width=\textwidth, height=0.8\textwidth]{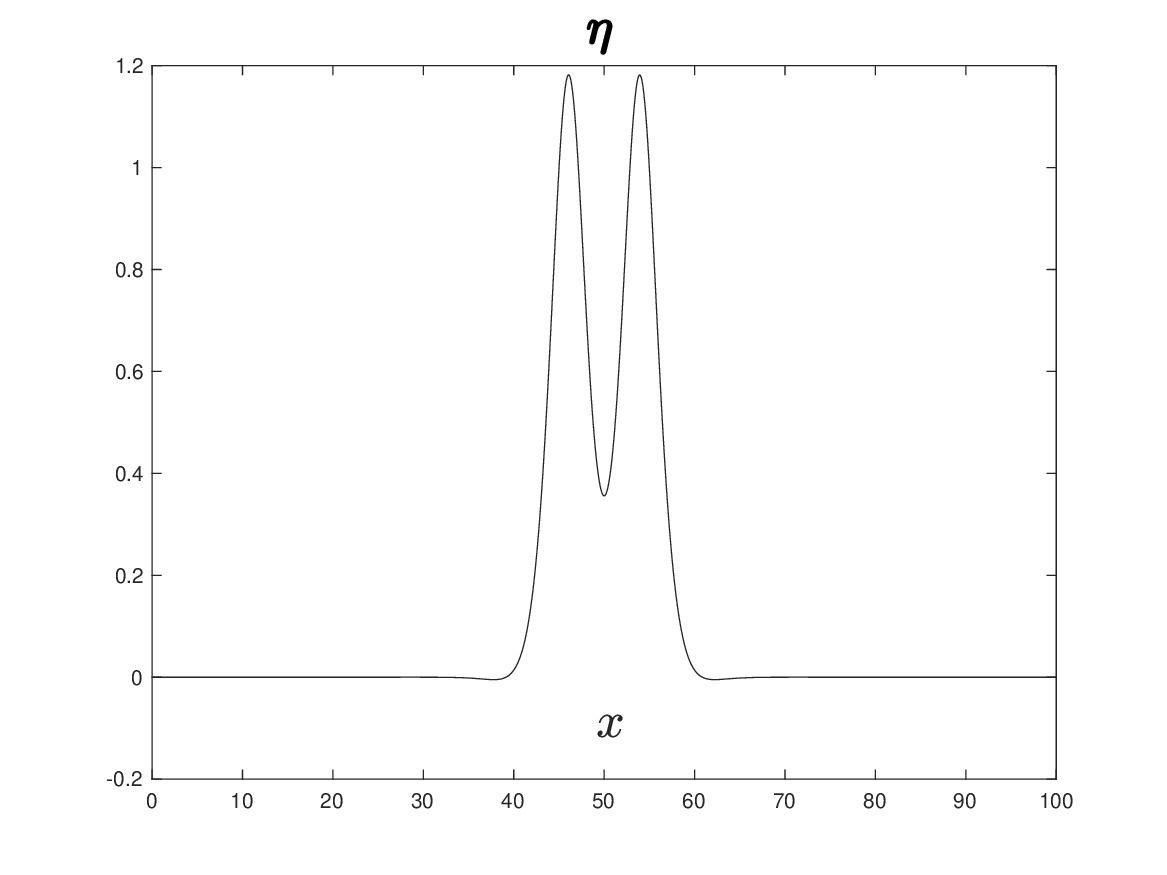}
     \end{subfigure}
        \caption{Solitary wave of the Boussinesq system \eqref{1bbl} computed with parameters $b=d=2$, $b_2 = d_2 = 2$, $a=-1$, $c=-2$, $a_2=1$, $c_2=3$, and wave velocity $\omega = 0.2$. Note that $a \neq c$, $a_2 \neq c_2$ and the initial guess for the iterative solver satisfies $v^0 \neq \psi^0$ in this experiment. }
        \label{Solit_u_eta11}
\end{figure}


\section{Conclusions}
In this work, we have investigated the existence and numerical computation of traveling wave solutions for a general class of nonlinear higher-order Boussinesq evolution systems with a Hamiltonian structure. Using the system's variational formulation and applying the concentration-compactness principle of P.-L. Lions, we established the existence of finite-energy traveling wave solutions, even in the presence of non-homogeneous nonlinearities.

To complement the theoretical results, we developed and implemented spectral numerical methods applicable to both homogeneous and non-homogeneous cases. In the homogeneous setting, we employed a Fourier spectral method with an iterative scheme enhanced by stabilizing factors to ensure convergence. For non-homogeneous nonlinearities, we utilized a Fourier collocation method combined with Newton’s iteration. The numerical simulations validated the theoretical predictions and further provided insight into the behavior of the solution beyond the range covered by the analytical results.

Our findings highlight the influence of additional parameters, such as the specific form and degree of the nonlinearities, on the existence and shape of solitary traveling waves. In particular, we observed that non-monotonic and oscillatory wave profiles may emerge under certain parameter regimes, a phenomenon consistent with similar results reported in other nonlinear dispersive models.

Overall, this study contributes both theoretical and numerical tools for understanding solitary wave dynamics in higher-order Boussinesq systems and opens further research into more complex wave interactions and stability properties in such models.

\subsection*{Acknowledgment} The authors are grateful to the referees for the careful reading of this paper and their valuable suggestions and comments.  R. de A. Capistrano–Filho was partially supported by CAPES/COFECUB grant number 88887.879175/2023-00, CNPq grant numbers 301744/2025-4, 421573/2023-6, and 307808/2021-1, and Propesqi (UFPE). J. C. Mu\~noz was supported by the Department of Mathematics of the Universidad del Valle (Colombia) under the C.I. 71360 project. J. Quintero would like to thank the Universidad del Valle for the sabbatical year (08/2024-08/2025).


\begin{thebibliography}{1}

\bibitem{BCL-15} E. S. Bao, R. M. Chen, and Q. Liu, \textit{Existence and symmetry of ground states to the Boussinesq abcd systems}, Arch. Rat. Mech. Anal. \textbf{216}:2, 569--591 (2015).

\bibitem{Benney-1977} D. J. Benney, \textit{A general theory for interactions between short and long waves}, Stud. Appl. Math., \textbf{56}, 81--94 (1977).

\bibitem{BCS02} J. L. Bona, M. Chen, and J. C. Saut, \textit{Boussinesq equations and other systems for small-amplitude long waves in nonlinear dispersive media. I. Derivation and linear theory,} J. Nonlinear Sci., \textbf{12},  283--318 (2002).

\bibitem{BCS04}J. L. Bona, M. Chen, and J. C. Saut, \textit{Boussinesq equations and other systems for small-amplitude long waves in nonlinear dispersive media. II: The nonlinear theory, } Nonlinearity, \textbf{17}, 925--952 (2004).

\bibitem{BCL2005} J. L. Bona, T. Colin, and D. Lannes, \textit{Long wave approximations for water waves}, Arch. Ration. Mech. Anal. \textbf{178}:3, 373--410 (2005).

\bibitem{Dougalis} J. L. Bona, V. A. Dougalis, and D. E. Mitsotakis, \textit{Numerical solution of Boussinesq systems of KdV-KdV type: II Evolution of radiating solitary waves}, Nonlinearity \textbf{21}, no. 12, 2825--2848 (2008).

\bibitem{CQS-2024} R. de A. Capistrano-Filho, J. Quintero, and S.-M. Sun, \textit{Orbital stability of the $abcd$-Boussinesq system: The Hamiltonian case}, Nonlinearity, \textbf{38} 075034, 1-41 (2025). 

\bibitem{CNS2011} M. Chen, N. V. Nguyen, and S.-M. Sun,  \textit{Existence of traveling-wave solutions to Boussinesq systems}, Differ. Integral Equ. \textbf{24} (9--10), 895--908 (2011).

\bibitem{Craig-Groves-1994} W. Craig and M. D. Groves, \textit{Hamiltonian long-wave approximations to the water-wave problem}, Wave Motion, \textbf{19}, 367--389 (1994).

\bibitem{Duran} V. A. Dougalis, A. Dur\'an and L. Saridaki, \textit{On solitary-wave solutions of Boussinesq/Boussinesq systems for internal waves}, Physica D: Nonlinear Phenomena \textbf{428}, 133051 (2021).

\bibitem{Dursun} I. Dursun and D. Idris, \textit{Numerical simulations of the improved Boussinesq equation}, Num. Met. Partial Diff. Eqs. \textbf{26}, No. 6, 1316--1327 (2009).

\bibitem{Ersoy} O. Ersoy, A. Korkmaz, and I. Dag, \textit{Exponential B-Splines for numerical solutions to some Boussinesq systems for water waves}, Mediterranean J. Math. \textbf{13}, 4975--4994 (2016). 

\bibitem{Esfahani-Levandosky-2021} A. Esfahani and S. Levandosky, \textit{Existence and stability of traveling waves of the fifth-order KdV equation}, Physica D, \textbf{421}, 132872 (2021).

\bibitem{HTS-13} S. Hakkaev, M. Stanislavova, and A. Stefanov, \textit{Spectral stability for subsonic traveling pulses of the Boussinesq ``abc” system}. SIAM J. Appl. Dyn. Syst., \textbf{12}:2, 878--898 (2013).

\bibitem{Hunter-Scheurle-1988} J. Hunter and J. Scheurle, \textit{Existence of perturbed solitary wave solutions to a model equation for water waves}, Physica D, \textbf{32}, (1988) 253--268.


\bibitem{Kawahara-1972} T. Kawahara, \textit{Oscillatory solitary waves in dispersive media}, Journal of the Physical Society of Japan, \textbf{33}, 260--264 (1972).

\bibitem{Kichenassamy-Olver-1991} S. Kichenassamy and P. Olver, \textit{Existence and non-existence of solitary waves to higher-order evolution equations}, IMA Preprint Series, \textbf{803} (1991).


\bibitem{KS-24} C. Klein and J-C. Saut, \textit{Numerical study of the Amick–Schonbek system}, Stud Appl Math., \textbf{153}:e12691, (2024).

\bibitem{KS-25} C. Klein and J-C. Saut, \textit{On the Kaup–Broer–Kupershmidt systems}, EMS Surv. Math. Sci., \textbf{12}:1, 215--242 (2025).

\bibitem{Marinov}  T. Marinov and R.S. Marinova, {\it Solitary wave solutions with non-monotone shapes for the modified Kawahara equation}, J. Comput. Appl. Math, \textbf{340} (4), (2017). 

\bibitem{Lev-2007} S. Levandosky, {\it Stability of solitary waves of a fifth-order water wave model}, Physica D: Nonlinear Phenomena, \textbf{227}:2 (2007).

\bibitem{Lions-1984a} P.-L. Lions, \textit{The concentration-compactness principle in the calculus of variations. The locally compact case. Part I},  Annales de l'Institut Henri Poincaré, Analyse Non Linéaire, \textbf{1}, 109--145 (1984).

\bibitem{Lions-1984b} P.-L. Lions, \textit{The concentration-compactness principle in the calculus of variations. The locally compact case. Part II},  Annales de l'Institut Henri Poincaré, Analyse Non Linéaire, \textbf{1}, 223--283 (1984).

\bibitem{Olver-1984} P. J. Olver, Hamiltonian and non-Hamiltonian models for water waves. In \textit{Lecture Notes in Physics}, Springer-Verlag, \textbf{195}, 273--290.


\bibitem{Ponce-1993} G. Ponce, \textit{Lax pairs and higher-order models for water waves},  Journal of Differential Equations, \textbf{102} (2), 360--381 (1993) .

\bibitem{Zufiria-1987} J. A. Zufiria, \textit{Symmetry breaking in periodic and solitary gravity-capillary waves on water of finite depth}, Journal of Fluid Mechanics, \textbf{184}, 183--206 (1987).



\bibitem{GSS}  M. Grillakis, J. Shatah, W. Strauss,  \textit{Stability Theory of Solitary Waves in Presence of Symmetry, I.} Functional Anal. \textbf{74}, 160-197  (1987).




















\end{thebibliography}
\end{document}